\definecolor{dblue}{rgb}{0,0,0.7}
\definecolor{dartmouthgreen}{rgb}{0.05, 0.5, 0.06}
\definecolor{dlilac}{rgb}{0.6, 0.33, 0.73}
\definecolor{ffuchsia}{rgb}{0.96, 0.0, 0.63}
\definecolor{hpurple}{rgb}{0.32, 0.09, 0.98}
\newtheorem{thm}{Theorem}
\newtheorem{prop}[thm]{Proposition}
\newtheorem{cor}[thm]{Corollary}
\newtheorem{lem}[thm]{Lemma}
\numberwithin{thm}{section}
\numberwithin{equation}{section}
\theoremstyle{definition}
\newtheorem{defi}[thm]{Definition}
\newtheorem{rem}[thm]{Remark}
\newtheorem{exa}[thm]{Example}
\newcommand{\QQ}{\mathbb{Q}}
\newcommand{\ZZ}{\mathbb{Z}}
\newcommand{\RR}{\mathbb{R}}
\newcommand{\CC}{\mathbb{C}}
\newcommand{\AAA}{\mathbb{A}}
\DeclareMathOperator{\im}{Im} 
\DeclareMathOperator{\re}{Re} 
\DeclareMathOperator{\Res}{Res}
\DeclareMathOperator{\Hom}{Hom}
\DeclareMathOperator{\aut}{Aut}
\DeclareMathOperator{\Gal}{Gal}
\DeclareMathOperator{\br}{Br} 
\DeclareMathOperator{\ab}{ab}
\DeclareMathOperator{\inv}{inv}
\DeclareMathOperator{\unr}{nr}
\DeclareMathOperator{\Frob}{Frob}
\DeclareMathOperator{\Ker}{Ker}
\DeclareMathOperator{\Sel}{Sel}
\DeclareSymbolFont{cyrletters}{OT2}{wncyr}{m}{n}
\DeclareMathSymbol{\Sha}{\mathalpha}{cyrletters}{"58}
\title{Counting abelian number fields with restricted ramification type}
\author{Julie Tavernier}
\date{July 2025}
\address{Julie Tavernier, Department of Mathematical Sciences, University of Bath,
Claverton Down, Bath, BA2 7AY, UK}
\email{jlt86@bath.ac.uk}
\begin{document}
  \begin{abstract} We count abelian number fields ordered by arbitrary height function whose generator of tame inertia is restricted to lie in a given subset of the Galois group, and find an explicit formula for the leading constant. We interpret our results as a version of the Batyrev-Manin conjecture on $BG$ and rephrase our result on number fields with restricted ramification type in terms of integral points on $BG$. We also prove that such number fields are equidistributed with respect to suitable collections of infinitely many local conditions. 
\end{abstract}
\maketitle

\tableofcontents

\section{Introduction}

A conjecture of Malle \cite{malle1, malle2} states that the number of number fields of bounded discriminant and fixed Galois group $G$ should satisfy an asymptotic formula of the form \[c_{k,G, \text{Malle}}B^{a(G)}(\log B)^{b(k,G) - 1}.\] There is an extensive literature on examples of this conjecture, for example the case of abelian number fields proved by Wright in \cite{Wright1989DistributionOD} and the cases of extensions of degree four and five whose Galois closures have Galois groups $S_4$ and $S_5$, proved by Bhargava in \cite{bhargavaquartic} and \cite{bhargavaquintic} respectively. However, the conjecture remains open in general. Recently, substantial work has taken place relating Malle's conjecture to the Batyrev-Manin conjecture for rational points on Fano varieties through the language of stacks \cite{loughransantens,ellenbergmaninheights, darda2024batyrevmaninconjecturedmstacks}. This viewpoint has given fundamental new insight into the conjecture by Malle.

The main contributions of this paper are as follows: first, we prove a generalisation of Malle's conjecture for abelian groups, allowing more general height functions, as proposed by Loughran and Santens in \cite[Conj.~$9.1$]{loughransantens}, broadening the work of Wood \cite{Wood_2010}. Secondly, we prove a refined version of this result counting such fields with restricted ramification type. Thirdly, we prove an equidistribution result for this count, in the vein of the Malle-Bhargava heuristics \cite{bhargavamassformulae, Wood_2010, bhargava2015geometryofnumbersmethodsglobalfields} and using this result we are able to prove a stacky analogue of a strong version of the Grunwald problem with restricted ramification type.

\subsection{Heights and restricted ramification type} The original conjecture posed by Malle concerns counting number fields ordered by discriminant, which is an example of a \emph{height} function. In \cite[Ques.~$4.3$]{ellenbergfunctfields}, Ellenberg and Venkatesh consider an extension of Malle's conjecture to counting by a different height function, which they call the $f$-discriminant. In this paper we count by arbitrary heights, using the definition of height functions found in \cite[$\S 8.1$]{loughransantens}.

In order to state our results, we will introduce some notation. Let $G$ be a finite abelian group and $k$ a number field, with a fixed algebraic closure $\overline{k}$ and absolute Galois group $\Gamma_k$. Recall that a surjective homomorphism $\varphi:\Gamma_k\to G$ corresponds to a finite Galois extension $L/k$, together with a fixed isomorphism $\Gal(L/k)\cong G$. We also recall the definition of the  Tate twist of $G$ by $-1$, which is $G(-1) = \Hom(\lim\limits_{\leftarrow n}\mu_n,G)$, and $G(-1)$ is viewed as a finite Galois module. We will denote the non-identity elements of $G(-1)$ by $G(-1)^{*}$.

Height functions are defined using the \emph{ramification type} which is the local map $\rho_{G,v} : \Hom(\Gamma_{k_v}, G) \rightarrow G(-1)^{\Gamma_{k_v}}$ defined in $\S$\ref{ht fns inerti sec} for tame places $v$. The ramification type $\rho_{G,v}(\varphi_v)$ of the homomorphism $\varphi_v \in\Hom(\Gamma_{k_v}, G) $ factoring through a finite, tamely ramified extension $L/k_v$ of ramification degree $e$, is the element of $G(-1)^{\Gamma_{k_v}}$ obtained by composing $\varphi_v$ with the canonical isomorphism $\mu_e \rightarrow I_v$, where $I_v$ is the inertia subgroup of $\Gal(L/k_v)$.

Let $w: G(-1) \rightarrow \QQ$ be a class function invariant under the action of $\Gamma_k$ and such that $w(1) = 0$, called a \emph{weight}  function. If $\varphi_v \in\Hom(\Gamma_{k_v}, G) $, the local height of $\varphi_v$ at all but finitely many tame places $v$ of $k$ is defined by \[H_v(\varphi_v) = q_v^{w(\rho_{G,v}(\varphi_v))},\] where $q_v$ is the cardinality of the residue field at $v$. For the finitely many remaining places we take $H_v$ to be arbitrary. The height $H$ is the product $H = \prod_v H_v$. We say $H$ is \emph{big} if its associated weight function satisfies $w(\gamma) > 0$ for all $\gamma \in G(-1)^*$. For each place $v$ of $k$, we fix an embedding of the algebraic closure of $k$ in that of $k_v$, and hence an inclusion $\Gamma_{k_v}\subseteq \Gamma_k$. Then given a homomorphism $\varphi \in \Hom (\Gamma_k, G)$, we denote by $\varphi_v$ its restriction to $\Gamma_{k_v}$. One of the first theorems of our paper is the following. 

\begin{thm}\label{main thm abt restr ram}
Let $H$ be a big height function with associated weight function $w$, $R \subseteq G(-1)^*$ be a non-empty, Galois-stable subset and $M_R(H) = \{\gamma \in R : w(\gamma) \: \text{is minimal}\}$. Let $S$ be a finite set of places of $k$ containing the infinite places, those dividing $|G|$ and such that $\mathcal{O}_S$ has trivial class group. Then we have that
        \begin{align*}
             \# \{\varphi \in \Hom (\Gamma_k, G): \:H(\varphi)\leq B, \: \rho_{G,v}(\varphi_v) \in & R \cup \{0\} \: \forall v \not\in S\} \\
              & \sim c_{k,R,G,H} B^{a_R(H)}(\log B)^{b_R(H)-1} \end{align*} where $a_R(H) = (\min_{\gamma \in R}w(\gamma))^{-1}$, the exponent of $\log B$ is $b_R(H) = | M_R(H)/ \Gamma_k|$ and $c_{k,R,G,H}$ is a positive constant. An explicit formula for $c_{k,R,G,H} $ in terms of sums of Euler products in the case that $M_R(H)$ generates the group $G$ is stated in (\ref{lc c(k,R,G,H) euler}), and a formula for $c_{k,R,G,H}$ in the general case is given by (\ref{lc unbalanced g ext restr ram}).
\end{thm}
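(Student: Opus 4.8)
The plan is to recast the count as the analytic behaviour of a Dirichlet series and feed it into a Tauberian theorem, following the strategy used for abelian Malle by Wright \cite{Wright1989DistributionOD} and Wood \cite{Wood_2010}. Since $G$ is abelian, class field theory identifies $\Hom(\Gamma_k,G)$ with $\Hom(\AAA_k^\times/k^\times,G)$, and such a homomorphism is a family $(\varphi_v)_v$ of local homomorphisms $\varphi_v\colon\Gamma_{k_v}\to G$, unramified at all but finitely many $v$, whose product is trivial on the diagonal $k^\times$. First I would fix the behaviour at the places of $S$ --- finitely many choices, each with a bounded local factor --- and here the hypothesis that $\mathcal{O}_S$ has trivial class group enters: once the $S$-part is fixed, the number of global $\varphi$ realising a prescribed ramification outside $S$ is controlled purely by a reciprocity constraint (vanishing on $\mathcal{O}_S^\times$), with no residual class-group contribution. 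For $v\notin S$ the place is automatically tame, so $\varphi_v$ is determined by its unramified part together with $\rho_{G,v}(\varphi_v)\in G(-1)^{\Gamma_{k_v}}$, and as $H_v(\varphi_v)=q_v^{w(\rho_{G,v}(\varphi_v))}$ both $H(\varphi)$ and the condition $\rho_{G,v}(\varphi_v)\in R\cup\{0\}$ are local in this data.

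Next I would form $Z(s)=\sum_\varphi H(\varphi)^{-s}$ over the admissible $\varphi$, and remove the remaining global reciprocity constraint by orthogonality over a finite group built from the $S$-units, writing $Z(s)=\sum_\chi a_\chi\prod_v Z_v(s,\chi)$ as a finite sum of Euler products. At almost every $v\notin S$ one has $Z_v(s,\chi)=1+\sum_{\gamma\in R\cap G(-1)^{\Gamma_{k_v}}}c_v(\gamma,\chi)\,q_v^{-s\,w(\gamma)}$, the $1$ coming from $\gamma=0$ and $c_v(\gamma,\chi)$ a root of unity coming from the local component of $\chi$. Since the places that can carry a given ramification type $\gamma$ form a Chebotarev set governed by the action of $\Gamma_k$ on $G(-1)$ through $\Gal(k(\mu_\infty)/k)$, resolving this condition rewrites $\prod_v Z_v(s,\chi)$ as a product of Hecke $L$-functions $L(s\,w(\gamma),\psi)$ --- with $\psi$ ranging over the character group of $\Gal(K_O/k)$ for the cyclotomic-type field $K_O$ attached to each Galois orbit $O$ of types --- times an Euler product that converges uniformly on compacta for $\Re(s\,w(\gamma))>\tfrac12$.

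Now I would read off the analytic behaviour. For the principal $\chi$, every orbit $O\subseteq M_R(H)$ contributes one factor $\zeta_k(s\,w_0)$, where $w_0=\min_{\gamma\in R}w(\gamma)$, hence one simple pole at $s=a_R(H)=1/w_0$, while all other $L$-factors and the convergent Euler products are holomorphic and non-zero there. As the minimal-weight multiplicities are bounded by the orbit sizes, the pole of every $\prod_v Z_v(s,\chi)$ at $s=a_R(H)$ has order at most the number of orbits $|M_R(H)/\Gamma_k|=b_R(H)$, with order exactly $b_R(H)$ only for those $\chi$ trivial in the $M_R(H)$-directions; so $Z(s)=g(s)(s-a_R(H))^{-b_R(H)}+(\text{strictly lower pole order})$ with $g$ holomorphic near and non-vanishing at $a_R(H)$. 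Since the $L$-functions and Euler products involved continue meromorphically past $\Re s=a_R(H)$ with polynomial growth in vertical strips, a Delange--Ikehara type Tauberian theorem --- in the form standard in the Batyrev--Manin circle, compare Loughran--Santens \cite{loughransantens} --- yields the asymptotic with $c_{k,R,G,H}=g(a_R(H))/\bigl(a_R(H)\,(b_R(H)-1)!\bigr)$.

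The remaining work, and the \emph{main obstacle}, is to evaluate $g(a_R(H))$. Unwinding it writes $c_{k,R,G,H}$ as a sum, over the $\chi$ realising the maximal pole order, of products of residues of $\zeta_k$, special values $L(1,\psi)$ of Hecke $L$-functions, convergent Euler products of local densities, and the archimedean and $S$-place factors from the initial reduction; positivity then follows since $c_{k,R,G,H}$ is a non-negative limit which is non-zero, the local densities at unramified and minimally ramified primes being positive. When $M_R(H)$ generates $G$ the set of contributing $\chi$ and the shapes of the Euler products are uniform and this collapses to the single formula (\ref{lc c(k,R,G,H) euler}). When $M_R(H)$ does \emph{not} generate $G$, a generic admissible $\varphi$ of height $\le B$ is no longer governed by $\langle M_R(H)\rangle$ alone: one must stratify $\varphi$ by this subgroup, treat separately the part valued in it (whose count is the ``balanced'' one) and the bounded contribution of the types in $R\setminus M_R(H)$ --- all of which still feed into the leading constant rather than a lower-order term --- and recombine, and this stratified bookkeeping is what produces the more intricate expression (\ref{lc unbalanced g ext restr ram}). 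A recurring technical subtlety is the Tate-twist condition $\gamma\in G(-1)^{\Gamma_{k_v}}$, which is precisely what converts plain Dedekind zeta factors into Hecke $L$-functions and thereby makes their residues and special values appear in the constant.
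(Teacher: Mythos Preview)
Your plan for the balanced case is essentially the paper's: Poisson summation over the finite group $\mathcal{O}_S^\times\otimes G^\wedge$ (your ``orthogonality over a finite group built from the $S$-units''), comparison of the local Fourier transforms to powers of $\zeta_k$ (the paper packages this as $\hat f_{H,R}(x;s)=\zeta_k(s/a_R(H))^{b_R(H,x)}G(x;s)$ rather than as a product of Hecke $L$-functions, but this is cosmetic), and a Delange Tauberian theorem, yielding exactly (\ref{lc c(k,R,G,H) euler}). One caution: your positivity argument is circular as stated --- ``a non-negative limit which is non-zero'' presupposes the count already has the claimed order of magnitude, which is what you are proving; the paper instead establishes positivity by identifying the constant with a Tamagawa measure of a non-empty Brauer--Manin set.

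The genuine gap is in the unbalanced case. You speak of ``the bounded contribution of the types in $R\setminus M_R(H)$'', but this contribution is not bounded: the stratification is over $\psi\in\Hom(\Gamma_k, G/\langle M_R(H)\rangle)$, an \emph{infinite} set, and formula (\ref{lc unbalanced g ext restr ram}) is the infinite sum $\sum_\psi c_{k,R,\langle M_R(H)\rangle_\psi,H}$. For each fixed $\psi$ the count of lifts is indeed a balanced problem in $\langle M_R(H)\rangle$ to which Theorem \ref{balanced ht restr ram thm} applies, but interchanging the limit $B\to\infty$ with the infinite sum over $\psi$ requires dominated convergence, and for that a summable uniform bound of the shape $N_\psi(B)/\bigl(B^{a_R(H)}(\log B)^{b_R(H)-1}\bigr)\ll \Phi(\psi)^{-(1+\delta)}$. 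Obtaining this bound is the real content of \S3: one must, for every $\psi$ that lifts at all, exhibit a lift $\tilde\psi$ ramified only at the places ramifying in $\psi$ together with a \emph{fixed} finite set $S_0$ independent of $\psi$, and this is supplied by the Greenberg--Wiles formula (Proposition \ref{existence of a lift}), which proves surjectivity of $H^1(k,A)\to\bigoplus_{v\notin S_0}H^1(k_v,A)/H^1_{\unr}(k_v,A)$ for $S_0$ large enough. None of this is visible from the Dirichlet-series analysis of $Z(s)$ alone, and ``stratified bookkeeping'' considerably undersells what is required.
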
 \noindent An explicit stacky formulation of the leading constant is stated in Theorem \ref{stacky main thm intro}.

We prove Theorem \ref{main thm abt restr ram} using techniques taken from harmonic analysis, developed in \cite[$\S 3$]{HNPabelianext} and \cite[$\S 3.4$]{Frei_2022}. We say that the homomorphisms $\varphi \in \Hom (\Gamma_k, G)$ in Theorem \ref{main thm abt restr ram} have ramification type restricted to lie in $R$, and note that this theorem implies that there are infinitely many such homomorphisms. Problems of this type arise in the Cohen-Lenstra heuristics. 

Number fields correspond to the surjective homomorphisms $\varphi \in \Hom (\Gamma_k, G)$, which we refer to as $G$-extensions, but we count all homomorphisms $ \Gamma_k \rightarrow G$. However, the non-surjective homomorphisms end up being negligible, which is seen by applying Theorem \ref{balanced ht restr ram thm} to each subgroup of $G$.

 When proving Theorem \ref{main thm abt restr ram}, it is important to consider separately the case of \emph{balanced} and \emph{unbalanced} height functions. The difference between these two cases was first observed by Wood in \cite{Wood_2010} where she considers the discrepancy that occurs when counting by discriminant instead of by conductor (note that she refers to what we call height functions as counting functions, and she uses the slightly different condition of fair counting functions). However one should note that the condition of a height being balanced in our sense is strictly weaker than the definition of fairness found in \cite[$\S 2.1$]{Wood_2010}.

In the special case where $R$ is empty, we recover those abelian number fields that are unramified outside of a finite set of places $S$. However, as there are only finitely many such fields,  counting them does not give any interesting results and so we consider the case where $R$ is non-empty. From Theorem \ref{main thm abt restr ram} we obtain a result on the total count of $G$-extensions of bounded height.

\begin{cor}\label{formal consequence}
    Let $H$ be a height function with associated weight function $w$. Let $M_G(H) = \{\gamma \in G(-1)^*: w(\gamma) \: \text{is minimal}\}$. Then  we have \[\#\{\varphi \in \Hom (\Gamma_k, G) : H(\varphi) \leq B\} \sim  c_{k,G,H} B^{a(H)} (\log B)^{b(H)-1}\] where \[a(H) = \left(\min_{\gamma \in G(-1)^*} w(\gamma)\right)^{-1} \:\ \text{and } \:\ b(H) = |M_G(H)/ \Gamma_k|\] and has an explicit formula in terms of sums of Euler products.
\end{cor}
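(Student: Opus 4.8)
The plan is to deduce Corollary~\ref{formal consequence} immediately from Theorem~\ref{main thm abt restr ram} by taking the weakest possible ramification restriction, $R = G(-1)^*$. This is a legitimate choice of $R$: it is the complement in $G(-1)$ of the identity element, which $\Gamma_k$ fixes, so it is Galois-stable, and it is non-empty as soon as $G(-1)\neq 0$. I would treat the degenerate case $G(-1)=0$ separately --- then every $\varphi\in\Hom(\Gamma_k,G)$ is unramified outside $S$, there are only finitely many such $\varphi$, and the statement is trivial --- and I would note at the outset that one must assume $H$ is big in order for $a(H)$, and hence the count itself, to be finite, as is presumably intended in the hypothesis.

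The key observation is that, for $R=G(-1)^*$, the local condition appearing in Theorem~\ref{main thm abt restr ram} reads $\rho_{G,v}(\varphi_v)\in G(-1)^*\cup\{0\} = G(-1)$ for all $v\notin S$, and this is automatically satisfied, since $\rho_{G,v}(\varphi_v)\in G(-1)^{\Gamma_{k_v}}\subseteq G(-1)$ by the very definition of the ramification type. Consequently the set counted by Theorem~\ref{main thm abt restr ram} with this choice of $R$ is precisely $\{\varphi\in\Hom(\Gamma_k,G):H(\varphi)\leq B\}$, which is the quantity in the Corollary.

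It then remains only to match the invariants in the two statements. With $R=G(-1)^*$ one has $\min_{\gamma\in R}w(\gamma) = \min_{\gamma\in G(-1)^*}w(\gamma)$, whence $a_R(H)=a(H)$; and $M_R(H) = \{\gamma\in G(-1)^*: w(\gamma)\text{ is minimal}\} = M_G(H)$, whence $b_R(H) = |M_G(H)/\Gamma_k| = b(H)$. Setting $c_{k,G,H}:=c_{k,G(-1)^*,G,H}$, which is positive and, by Theorem~\ref{main thm abt restr ram}, has the explicit sum-of-Euler-products form of (\ref{lc c(k,R,G,H) euler}) when $M_G(H)$ generates $G$ and of (\ref{lc unbalanced g ext restr ram}) in general, the conclusion of Theorem~\ref{main thm abt restr ram} becomes verbatim the asymptotic in the statement. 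I do not expect a genuine obstacle here: all of the substantive work --- the harmonic-analytic count, the balanced/unbalanced dichotomy, and the evaluation of the leading constant --- lives in the proof of Theorem~\ref{main thm abt restr ram}, and the present deduction is purely formal, the only mild care needed being the degenerate case $G(-1)=0$ and the unwinding of the definitions of $a_R(H)$ and $b_R(H)$ under the substitution $R=G(-1)^*$.
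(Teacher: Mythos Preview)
Your proposal is correct and is precisely the approach taken in the paper: Corollary~\ref{formal consequence} is obtained formally from Theorem~\ref{main thm abt restr ram} by setting $R=G(-1)^*$, so that the ramification condition $\rho_{G,v}(\varphi_v)\in R\cup\{0\}$ becomes vacuous and the invariants $a_R(H),b_R(H),c_{k,R,G,H}$ specialise to $a(H),b(H),c_{k,G,H}$.
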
 \noindent When $G$ is generated by $M_G(H)$, the leading constant $ c_{k,G,H}$ is given by (\ref{lc c(k,G,H) euler}). For general height functions, it is obtained from the formula (\ref{lc unbalanced g ext restr ram}) by setting $R = G(-1)^*$.

While special cases of this are known, such as for the conductor by Wood in \cite{Wood_2010} and the discriminant by Wright in \cite{Wright1989DistributionOD}, it is new for general height functions.  We begin by proving our theorems for balanced height functions and then consider the case where the height is not balanced. In order to count $G$-extensions in this case, we  apply the dominated convergence argument due to Rome and Koymans in \cite[Step~$1$]{koymansdct}.

\subsection{Counting number fields via stacks}

There is a natural interpretation of our problem of counting number fields via counting rational points on the stack $BG$ for a finite abelian group $G$, explored in the recent works \cite{ellenbergmaninheights, yasudabddtorsors, darda2024batyrevmaninconjecturedmstacks}, with a conjecture for the leading constant put forward in \cite{loughransantens}. 

The groupoid $BG(k)$ corresponds to $\Hom(\Gamma_k,G)$ and we denoted by $BG[k]$ the set of isomorphism classes of elements in $BG(k)$. From \cite[Def.~$7.2$]{loughransantens}, for all but finitely many tame places $v$ we have the \emph{partial $v$-adic integral points with respect to $R$} \[BG(\mathcal{O}_v)_{R} = \{\varphi_v \in BG(k_v) : \rho_{G,v}(\varphi_v) \in R \cup \{0\} \}\] and the \emph{partial adelic space with respect to $R$} \[BG(\AAA_k)_{R} = \lim_S \prod_{v \in S}BG(k_v)\prod_{v \not\in S}BG(\mathcal{O}_v)_{R},\] where the limit is taken over all finite sets of places $S$ containing the infinite places and those dividing $|G|$. 
By \cite[Thm.~$7.4$]{loughransantens}, an element $b \in \br BG$ is in the partially unramified Brauer group $ \br_{R}BG$ if and only if $b$ evaluates trivially on $BG(\mathcal{O}_v)_{R}$ for all but finitely many $v$. We write $BG(\AAA_k)_{M_R(H)}^{\br}$ for the Brauer-Manin set of $BG(\AAA_k)_{M_R(H)}$ with respect to $\br_{M_R(H)}BG$ and we use the definition from \cite[Def.~$8.1$]{loughransantens} that a \emph{balanced} height function is one such that the minimal elements $M_G(H)$ with respect to the weight function $w$ generate all of $G$. We call a height function \emph{balanced with respect to $R$} if $G$ is generated by $M_R(H)$. Consider the set $ \prod_{v \in S} BG(k_v) \prod_{v \not\in S}BG(\mathcal{O}_v)_{R}$. Asking that $\varphi \in  \prod_{v \in S} BG(k_v) \prod_{v \not\in S}BG(\mathcal{O}_v)_{R}$ is the same as asking that $\rho_{G,v}(\varphi_v) \in R \cup \{0\}$ for all but the finitely many places in $S$. Using the viewpoint of Manin's conjecture and the framework from \cite{loughransantens} we have the following stacky version of Theorem \ref{main thm abt restr ram}.
\begin{thm} \label{stacky main thm intro}
    Let $R \subseteq G(-1)^*$ be a non-empty Galois-stable subset, $S$ be a finite set of places containing the infinite places, those dividing $|G|$ and such that $\mathcal{O}_S$ has trivial class group, and $H$ be a big balanced height function with respect to $R$. Let \[W_{R,S} = \prod_{v \in S} BG(k_v) \prod_{v \not\in S}BG(\mathcal{O}_v)_{R}.\] Then we have 
\[
\frac{1}{|G|}\# \{ \varphi \in BG[k] :\varphi \in W_{R,S}, H(\varphi) \leq B\} \\ \sim c(k,R,G,H)B^{a_R(H)}(\log B)^{b_R(H)-1}\] where $a_R(H)$ and $b_R(H)$ are as in Theorem \ref{main thm abt restr ram} and $c(k,R,G,H)$ is given by \[ \frac{a_R(H)^{b_R(H)-1} |\br_{M_R(H)}BG/\br k|\tau_H\left( W_{R,S} \cap BG(\AAA_k)_{M_R(H)}^{\br}\right)}{|\widehat{G}(k)| (b_R(H)-1)!}.\] Here $\tau_H$ is a Tamagawa measure on $BG(\AAA_k)_{M_R(H)}$ defined in $\S$\ref{tamagawa measure sec} and $\widehat{G}$ is the Cartier dual of $G$. 
\end{thm}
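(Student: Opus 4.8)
The plan is to deduce Theorem~\ref{stacky main thm intro} from Theorem~\ref{main thm abt restr ram} by re-expressing the sum-of-Euler-products leading constant $c_{k,R,G,H}$ of the latter in the adelic language of \cite{loughransantens}; no new asymptotic analysis is needed. First I would note that, since $G$ is abelian, conjugation on $\Hom(\Gamma_k,G)$ is trivial, so $BG[k]=H^1(\Gamma_k,G)$ is canonically $\Hom(\Gamma_k,G)$ and every object of the groupoid $BG(k)$ has automorphism group $G$; hence the left-hand side of the theorem is $\tfrac{1}{|G|}\#\{\varphi\in\Hom(\Gamma_k,G):\dots\}$. As already recorded before the statement, membership $\varphi\in W_{R,S}$ is exactly the condition that $\rho_{G,v}(\varphi_v)\in R\cup\{0\}$ for every $v\notin S$, so Theorem~\ref{main thm abt restr ram} applies directly and gives the stated asymptotic with leading constant $\tfrac{1}{|G|}c_{k,R,G,H}$. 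Since $H$ is balanced with respect to $R$ the closed formula~(\ref{lc c(k,R,G,H) euler}) is available for $c_{k,R,G,H}$, and everything comes down to the purely arithmetic identity
\[
\frac{1}{|G|}\,c_{k,R,G,H}=\frac{a_R(H)^{b_R(H)-1}\,\bigl|\br_{M_R(H)}BG/\br k\bigr|}{|\widehat G(k)|\,(b_R(H)-1)!}\,\tau_H\!\bigl(W_{R,S}\cap BG(\AAA_k)_{M_R(H)}^{\br}\bigr).
\]

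Next I would unwind $\tau_H$ using its definition from $\S$\ref{tamagawa measure sec}: it is a regularised product $\theta\prod_v\lambda_v^{-1}\mu_{H,v}$, where $\mu_{H,v}$ is the local measure on $BG(k_v)$ giving each point $\varphi_v$ the mass $|\aut\varphi_v|^{-1}H_v(\varphi_v)^{-a_R(H)}$, the $\lambda_v$ are convergence factors built from the Artin $L$-function of the $\Gamma_k$-set $M_R(H)$, and $\theta$ is the corresponding global normalisation. Then $\tau_H\bigl(W_{R,S}\cap BG(\AAA_k)_{M_R(H)}\bigr)$ is computed place by place: for $v\in S$ one integrates $\lambda_v^{-1}\mu_{H,v}$ over the finite set $BG(k_v)$, recovering the archimedean and ramified ``bad'' local factors of~(\ref{lc c(k,R,G,H) euler}); for $v\notin S$ one integrates over $BG(\mathcal{O}_v)_{M_R(H)}$, and since $a_R(H)w(\gamma)=1$ for every $\gamma\in M_R(H)$, the $|G|$ unramified classes contribute total mass $1$ (each weighted by $|\aut\varphi_v|^{-1}=|G|^{-1}$) while each admissible ramified class of type $\gamma\in M_R(H)$ contributes a term of size $q_v^{-1}$, so the local integral has the shape $1+c_vq_v^{-1}+O(q_v^{-1-\delta})$ with $c_v$ governed by the same count of admissible ramification types appearing in~(\ref{lc c(k,R,G,H) euler}). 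Thus $\lambda_v^{-1}\mu_{H,v}$ reproduces the convergent local Euler factor there, the $\lambda_v$ reproduce the local factors of the $L$-function of $M_R(H)$ whose pole at $s=1$ has order $|M_R(H)/\Gamma_k|=b_R(H)$, and $\theta$ together with the Tauberian step at $s=a_R(H)$ --- which is where the $a_R(H)^{b_R(H)-1}/(b_R(H)-1)!$ in~(\ref{lc c(k,R,G,H) euler}) comes from --- produces the matching prefactor on the right-hand side.

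Finally I would incorporate the Brauer--Manin condition. Constant classes pair trivially with $BG(\AAA_k)_{M_R(H)}$ by global reciprocity, so orthogonality of characters gives $\mathbbm{1}\bigl[\varphi\in BG(\AAA_k)_{M_R(H)}^{\br}\bigr]=\tfrac{1}{|\br_{M_R(H)}BG/\br k|}\sum_{b\in\br_{M_R(H)}BG/\br k}\prod_v\exp\bigl(2\pi i\,\inv_v b(\varphi_v)\bigr)$; inserting this under the adelic integral turns $\tau_H\bigl(W_{R,S}\cap BG(\AAA_k)_{M_R(H)}^{\br}\bigr)$ into a sum over $\br_{M_R(H)}BG/\br k$ of Euler products of $b$-twisted local integrals, and the prefactor $|\br_{M_R(H)}BG/\br k|$ in $c(k,R,G,H)$ cancels the reciprocal. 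The main obstacle, and the genuine content of the argument, is then to match this sum of Euler products term by term with the sum of Euler products in~(\ref{lc c(k,R,G,H) euler}): one must identify the characters indexing the Poisson summation in the proof of Theorem~\ref{main thm abt restr ram} (carried out following \cite[$\S 3$]{HNPabelianext} and \cite[$\S 3.4$]{Frei_2022}) with the elements of $\br_{M_R(H)}BG/\br k$ via the pairing of class field theory, so that the $b$-twisted local integrals coincide with the local factors attached to each character, and then verify that all the group-order normalisations line up --- the factor $|\widehat G(k)|^{-1}$ coming from Poisson summation over the idele class group in $\tau_H$, the groupoid factor $|G|^{-1}$ on the left, and the local automorphism weights $|\aut\varphi_v|^{-1}$ which at unramified places offset the $|G|$ unramified classes. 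Once this dictionary between the analytic and adelic descriptions of the constant is in place the remaining verifications are routine, and the unbalanced case never enters since $H$ is assumed balanced with respect to $R$.
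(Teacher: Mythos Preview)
Your approach is correct and is essentially the paper's own proof: deduce the asymptotic from Theorem~\ref{main thm abt restr ram} (equivalently Theorem~\ref{balanced ht restr ram thm}), then rewrite the Euler-product constant~(\ref{lc c(k,R,G,H) euler}) in stacky language via the identification of $\mathcal{X}(k,R,H)$ with $\br_{M_R(H)}BG/\br k$ (Proposition~\ref{identif}) and character orthogonality (Lemma~\ref{lem 7.17}), exactly as you outline.

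One small slip: for $v\notin S$ the local integral is over $BG(\mathcal{O}_v)_R$, not $BG(\mathcal{O}_v)_{M_R(H)}$, since $W_{R,S}=\prod_{v\in S}BG(k_v)\prod_{v\notin S}BG(\mathcal{O}_v)_R$; the ramification types $\gamma\in R\setminus M_R(H)$ are what furnish your $O(q_v^{-1-\delta})$ term. Also, the factor $|\widehat{G}(k)|^{-1}$ does not arise from ``Poisson summation in $\tau_H$'' but from the group-theoretic identity $|\mathcal{O}_k^{\times}\otimes G^{\wedge}|^{-1}|G|^{-|S_f|}=|G|\,|\widehat{G}(k)|^{-1}|G|^{-|S|}$ (\cite[Lem.~10.22]{loughransantens}), which absorbs the Poisson normalisation $|\mathcal{O}_k^{\times}\otimes G^{\wedge}|^{-1}$ from~(\ref{lc c(k,R,G,H) euler}) together with the local $|G|^{-1}$ weights. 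With these two bookkeeping points corrected your sketch matches the paper's argument step for step.
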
 \noindent In the case $R=G(-1)^*$, this proves \cite[Conj.~$9.1$]{loughransantens} for finite abelian groups $G$.

\subsection{Equidistribution} \label{intro eq sec}

The Malle-Bhargava heuristics give a heuristic for the asymptotic behaviour of the quotient of the number of number fields of bounded height satisfying certain local specifications by the total count of number fields of bounded height. However, this is lacking as it does not take into account possible Brauer-Manin obstruction. A precise equidistribution conjecture is put forward in \cite[$\S 9.6$]{loughransantens} which takes into account such obstructions. We prove a strong version of equidistribution where we impose infinitely many local specifications. We consider the quotient of the count in Theorem \ref{main thm abt restr ram} in the case that the height function is balanced with respect to $R$, where we have imposed restrictions on the ramification type locally at all but finitely many places of $k$, by the total number of $G$-extensions of bounded height by interpreting it in the language of Manin's conjecture. Recall that a continuity set is one whose boundary has measure zero, in this case with respect to the Tamagawa measure.\begin{thm}[Strong equidistribution] \label{main thm}
   Let $G$ be a finite abelian group, $H$ be a big balanced height with respect to a non-empty Galois-stable subset $R \subseteq G(-1)^*$ and let $W \subset BG[\AAA_k]_{M_R(H)}$ be a continuity set. Then we have  \[\lim_{B \rightarrow \infty} \frac{\# \{ \varphi \in BG[k]: \varphi \in W, H(\varphi) \leq B\}}{\# \{ \varphi \in BG[k] : H(\varphi) \leq B\}} = \frac{\tau_{H}(W \cap BG[\AAA_k]_{M_R(H)}^{\br})}{\tau_{H}(BG[\AAA_k]_{M_R(H)}^{\br})}. \]
\end{thm}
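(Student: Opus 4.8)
### Proof strategy for Theorem \ref{main thm} (Strong equidistribution)

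The plan is to deduce the equidistribution statement from the asymptotic formulae already established, namely Theorem \ref{main thm abt restr ram} and its stacky refinement Theorem \ref{stacky main thm intro}, by a standard approximation argument on the space of local conditions. First I would reduce to the case where $W$ is a \emph{basic} continuity set, i.e. one cut out by finitely many local conditions: $W = \prod_{v \in S} W_v \prod_{v \notin S} BG(\mathcal{O}_v)_{R}$ for a finite set $S$ (containing the archimedean places, the places dividing $|G|$, and enough places to kill the class group), with each $W_v \subseteq BG(k_v)$ a continuity set for the local Tamagawa measure. For such $W$, applying Theorem \ref{stacky main thm intro} with the set $W_{R,S}$ replaced by $W$ (the argument there localises place-by-place, so imposing the extra local constraints $W_v$ at $v \in S$ only changes the local factors of the Tamagawa measure, not the exponents $a_R(H)$, $b_R(H)$) gives
\[
\frac{1}{|G|}\#\{\varphi \in BG[k] : \varphi \in W,\ H(\varphi) \leq B\} \sim c(k,W,G,H)\, B^{a_R(H)}(\log B)^{b_R(H)-1},
\]
with leading constant $c(k,W,G,H)$ equal to the same expression but with $\tau_H(W_{R,S} \cap BG(\AAA_k)_{M_R(H)}^{\br})$ replaced by $\tau_H(W \cap BG(\AAA_k)_{M_R(H)}^{\br})$. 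Dividing this by the corresponding asymptotic for the full count (Corollary \ref{formal consequence}, or equivalently Theorem \ref{stacky main thm intro} with $W_v = BG(k_v)$ for all $v \in S$), all the factors $a_R(H)^{b_R(H)-1}$, $(b_R(H)-1)!$, $|\widehat{G}(k)|$, $|\br_{M_R(H)}BG/\br k|$ and the powers of $B$ and $\log B$ cancel, leaving exactly the ratio of Tamagawa volumes $\tau_H(W \cap BG[\AAA_k]_{M_R(H)}^{\br}) / \tau_H(BG[\AAA_k]_{M_R(H)}^{\br})$. This settles the theorem for basic continuity sets.

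Next I would pass from basic continuity sets to arbitrary ones by an inner/outer approximation argument, which is where the genuine work lies. Given an arbitrary continuity set $W \subseteq BG[\AAA_k]_{M_R(H)}$, one wants, for every $\varepsilon > 0$, basic sets $W^- \subseteq W \subseteq W^+$ with $\tau_H(W^+ \setminus W^-) < \varepsilon$; since the counting functions are monotone in $W$, sandwiching and letting $\varepsilon \to 0$ then yields the result. The existence of such approximations follows from the fact that the partial adelic space $BG(\AAA_k)_{R}$ is a compact (or locally compact, restricted-product) space whose topology has a basis of basic sets, together with regularity of the Tamagawa measure $\tau_H$; the continuity hypothesis on $W$ (boundary of measure zero) is exactly what guarantees that the error committed on the ``collar'' $W^+ \setminus W^-$ can be made small, and that the Brauer--Manin condition — which is a closed condition — interacts well with the approximation, since $\tau_H(\partial W \cap BG[\AAA_k]_{M_R(H)}^{\br}) = 0$ as well. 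I would also need to check that imposing local conditions at an \emph{infinite} set of places (the defining feature of the ``strong'' equidistribution) causes no problem: this is handled by the convergence of the relevant Euler products, already implicit in the finiteness and positivity of the constants in Theorem \ref{main thm abt restr ram}, so that the tail contribution of places outside any sufficiently large finite $S$ is uniformly negligible.

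The main obstacle I anticipate is making the sandwiching argument uniform in $B$: one needs the asymptotic in Theorem \ref{stacky main thm intro} to hold for the approximating sets $W^{\pm}$ with an implied constant that does not blow up as $\varepsilon \to 0$, so that the order of limits (first $B \to \infty$, then $\varepsilon \to 0$) is legitimate. Concretely, one should show that $\limsup_B$ and $\liminf_B$ of the ratio in the theorem are each within $O(\varepsilon / \tau_H(BG[\AAA_k]^{\br}_{M_R(H)}))$ of the claimed limit, using that the leading constant $c(k,W,G,H)$ depends on $W$ only through $\tau_H(W \cap BG[\AAA_k]_{M_R(H)}^{\br})$ and is therefore Lipschitz in $W$ with respect to symmetric difference. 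Once this continuity of the leading constant in the local data is isolated — which really amounts to dominated convergence for the Euler product defining $\tau_H$, in the spirit of the argument of Rome--Koymans invoked earlier — the theorem follows. A minor point to dispatch along the way is that the contribution of non-surjective $\varphi$ is negligible on both sides (by applying the balanced case of the count to each proper subgroup of $G$), so that working with $BG[k] = \Hom(\Gamma_k, G)$ rather than with genuine $G$-extensions does not affect the limit.
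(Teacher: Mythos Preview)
Your overall architecture is correct and matches the paper's: reduce to ``basic'' sets of the form $W=\prod_{v\in S}W_v\prod_{v\notin S}BG(\mathcal{O}_v)_{M_R(H)}$, establish the asymptotic with leading constant $\tau_H(W\cap BG[\AAA_k]_{M_R(H)}^{\br})$ for these, and divide. The paper dispatches the reduction step by citing \cite[Prop.~9.16]{loughransantens} rather than writing out the sandwiching argument you sketch, and it takes $W_v=\{\psi_v\}$ a singleton.

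The one genuine difference is in how the basic case is handled. You propose to reopen the proof of Theorem~\ref{stacky main thm intro} and insert the indicator functions $\mathbbm{1}_{W_v}$ into the local Fourier transforms at $v\in S$; this works, since those factors are finite sums holomorphic in $s$ and do not affect the pole structure, but it is not an application of the theorem as stated. The paper instead exploits that Theorem~\ref{stacky main thm intro} already allows \emph{arbitrary} local heights $H_v$ at the finitely many places $v\in S$: it defines a deformed height $H_\epsilon$ by keeping $H_v$ on $W_v$ and setting $H_{\epsilon,v}(\varphi_v)=\epsilon$ off $W_v$, applies Theorem~\ref{stacky main thm intro} to $H_\epsilon$ as a black box, and lets $\epsilon\to 0$ to isolate the contribution of $W$. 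This yields the upper bound directly; the lower bound is then obtained by applying the upper bound to the complement of $W$. Your approach buys transparency (one sees exactly which local factor changes), while the paper's trick buys economy (no need to revisit the harmonic-analysis argument) and makes the dependence on the height manifest. Your worries about uniformity in $B$ and about non-surjective $\varphi$ are legitimate but, as you suspect, are absorbed by the same mechanisms that make Theorem~\ref{stacky main thm intro} work in the first place.
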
 \noindent The boundary of a set has measure zero if the indicator function can be approximated above and below by continuous functions, as stated in the proof of \cite[Prop.~$5.0.1$(d)]{peyretamagawa}. In a more general setting, the indicator function of a continuity set can be approximated by a dense collection of continuous functions. In the case of $W \subset BG[\AAA_k]_{M_R(H)}$, $W$ is such that $\prod_v U_v \subseteq W \subseteq \prod_v V_v$ for multiplicatively defined sets, and finite linear combinations of the indicator functions of these sets are dense, and thus we may use them to approximate $W$.

When $R= G(-1)^*$, this proves \cite[Conj.~$9.15$]{loughransantens} in the case of a finite abelian group $G$. The proof of this theorem is obtained from  the stacky version of the leading constant in Theorem \ref{stacky main thm intro}. It is important to note that the assumption that $H$ is balanced with respect to $R$ is necessary for Theorem \ref{main thm} to hold. As observed by Wood in \cite[$\S 6$]{Wood_2010}, equidistribution need not hold when counting all extensions. Instead we restrict to the fibres of the map $BG \rightarrow BG/\langle M_R(H) \rangle $ to obtain equidistribution with respect to the induced Tamagawa measure on each fibre. 

\subsection{Application to the Grunwald problem}

We give an application of Theorem \ref{main thm} to a strong version of the Grunwald problem with restricted ramification type. Let $G$ be a finite abelian group, $k$ a number field and $S$ a finite set of places of $k$. The Grunwald problem asks whether the map \[\Hom(\Gamma_k,G) \rightarrow \prod_{v \in S} \Hom(\Gamma_{k_v},G)\] is surjective. In our setting of stacks, this can be rephrased as asking whether the map \[BG[k] \rightarrow \prod_{v \in S} BG[k_v]\] is surjective. This is not true in general due to possible Brauer-Manin obstruction. For example when $G = \ZZ/8\ZZ$ the map $B\ZZ/8\ZZ[\QQ] \rightarrow B\ZZ/8\ZZ[\QQ_2]$ is not surjective, which is the Grunwald-Wang Theorem. We prove a stronger version of the Grunwald problem where as well as approximating finitely many local conditions, we restrict the ramification type at almost all places. We also take into account the Brauer-Manin obstruction.

\begin{cor}\label{grunwald}
    Let $R \subseteq G(-1)$ be a Galois-stable subset containing a non-trivial element such that the elements of $R$ generate $G(-1)$, and $S$ be a finite set of places of $k$. Let $\br := \br_{R}BG$ be the partially unramified Brauer group with respect to $R$. Then the image of the map \[BG[k] \rightarrow BG[\AAA_k]_{R}^{\br}\] is dense. In particular there is an affirmative answer to the Grunwald problem with restricted ramification type for $G$, $R$ and $S$, and with Brauer-Manin obstruction.
\end{cor}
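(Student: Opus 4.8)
The plan is to deduce Corollary~\ref{grunwald} from the strong equidistribution statement in Theorem~\ref{main thm} together with the structural description of the Brauer--Manin set. The key point is that density in $BG[\AAA_k]_R^{\br}$ is a statement about hitting every basic open neighbourhood, and a basic open neighbourhood of a point in the partial adelic space is cut out by finitely many local conditions at places $v \in S'$ (for some finite $S' \supseteq S$) together with the partial integrality condition with respect to $R$ at the remaining places. So the corollary will follow once we show that, for any such $S'$ and any prescribed local homomorphisms $(\varphi_v)_{v \in S'}$ whose corresponding adelic point lies in the Brauer--Manin set, there actually exists a global $\varphi \in BG[k]$ approximating it.

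\textbf{Step 1: Reduce density to nonemptiness of a continuity set.} Fix a finite $S' \supseteq S$ containing the infinite places and those dividing $|G|$, and fix an adelic point $(\psi_v)_v \in BG[\AAA_k]_R^{\br}$. Take $W = \prod_{v \in S'} W_v \times \prod_{v \notin S'} BG(\mathcal{O}_v)_R$, where $W_v \subseteq BG[k_v]$ is a small open (hence closed, since $BG[k_v]$ is finite/discrete) neighbourhood of $\psi_v$. Since each $BG[k_v]$ is discrete, $W$ is both open and closed in $BG[\AAA_k]_R$, so its boundary is empty and $W$ is trivially a continuity set; moreover $W$ is ``multiplicatively defined'' in the sense required by Theorem~\ref{main thm}. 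Density of the image of $BG[k] \to BG[\AAA_k]_R^{\br}$ is then equivalent to: for every such $W$ with $W \cap BG[\AAA_k]_R^{\br} \neq \emptyset$, there exists $\varphi \in BG[k]$ with $\varphi \in W$.

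\textbf{Step 2: Apply equidistribution.} Here the hypotheses of Corollary~\ref{grunwald} — that $R$ contains a non-trivial element and that $R$ generates $G(-1)$ — are exactly what is needed: taking $H$ to be any big height function balanced with respect to $R$ (e.g. the one attached to the weight $w$ that is $1$ on all of $R$ and suitably large off $R$, which exists because $R$ generates $G(-1)$, hence $M_R(H) = R$ generates $G$), Theorem~\ref{main thm} applies to the continuity set $W$. It gives
\[
\lim_{B \to \infty} \frac{\#\{\varphi \in BG[k] : \varphi \in W,\ H(\varphi) \leq B\}}{\#\{\varphi \in BG[k] : H(\varphi) \leq B\}} = \frac{\tau_H\bigl(W \cap BG[\AAA_k]_{M_R(H)}^{\br}\bigr)}{\tau_H\bigl(BG[\AAA_k]_{M_R(H)}^{\br}\bigr)}.
\]
If $W \cap BG[\AAA_k]_R^{\br} \neq \emptyset$, then since $W$ is open and the Brauer--Manin set is the support of $\tau_H$ (the local measures are supported on the local points and the Brauer condition removes a measure-zero, in fact clopen-complement, piece — here one uses that each local factor is finite and the Brauer pairing is locally constant), the numerator on the right is strictly positive. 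Hence the left-hand limit is strictly positive, so for all large $B$ the set $\{\varphi \in BG[k] : \varphi \in W,\ H(\varphi) \leq B\}$ is non-empty, producing the desired global point. The last sentence of the corollary — the affirmative answer to the Grunwald problem with restricted ramification — is the special case where $W_v$ for $v \in S$ is chosen to pin down a single prescribed local homomorphism and $S' = S$.

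\textbf{The main obstacle} I anticipate is verifying that $W \cap BG[\AAA_k]_R^{\br}$ has positive Tamagawa measure whenever it is non-empty, i.e. that the Brauer--Manin condition does not conspire to carve out precisely the open set we have chosen. This requires knowing that $BG[\AAA_k]_R^{\br}$ is open with non-empty interior inside each such basic clopen $W$ meeting it — equivalently that the Brauer evaluation map $\mathrm{inv} \circ \mathrm{ev}_b : BG[\AAA_k]_R \to \QQ/\ZZ$ is locally constant for each $b \in \br_R BG$ (true, since each local evaluation is locally constant and by definition of $\br_R BG$ almost all local evaluations vanish on $BG(\mathcal{O}_v)_R$), so that $BG[\AAA_k]_R^{\br}$ is a clopen subset and meets $W$ in a clopen subset of positive measure as soon as it meets $W$ at all. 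One must also check the compatibility of the two notions $\br_R BG$ versus $\br_{M_R(H)} BG$: with the choice of $w$ above one has $M_R(H) = R$, so the partially unramified Brauer groups coincide and the Brauer--Manin sets in Theorem~\ref{main thm} and in the corollary agree. Once these bookkeeping points are settled, the argument is a direct application of equidistribution, and no further estimates are needed.
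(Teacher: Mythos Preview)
Your proposal is correct and follows essentially the same approach as the paper: the paper's proof is the single sentence ``Corollary~\ref{grunwald} follows immediately from Theorem~\ref{main thm} by choosing a balanced height function such that $M_R(H) = R$,'' and you have filled in precisely the details (continuity sets, positivity of the Tamagawa measure on a nonempty clopen, and the identification $\br_R BG = \br_{M_R(H)} BG$) that this sentence leaves implicit.
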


\noindent Corollary \ref{grunwald} follows immediately from Theorem \ref{main thm} by choosing a balanced height function such that $M_R(H) = R$. This is known in the case $R= G(-1)$ which is essentially Grunwald-Wang \cite[Thm.~$9.2.8$]{neukirch2013cohomology}, but is otherwise new and not yet studied in the literature.

  \begin{rem}
One can view Theorem \ref{main thm abt restr ram} as a version of Manin's conjecture for partial integral points on $BG$. This can be done by viewing the ``boundary divisors'' of $BG$ as the disjoint union of non-trivial conjugacy classes of $G(-1)$, which in the case of a constant abelian group $G$ is given by distinct non-trivial elements of $G(-1)$. For varieties there is an integral points version of Manin's conjecture due to Santens in \cite{santens2023maninsconjectureintegralpoints}. Our results can be put into this framework and are seen to agree with \cite[Conj.~$6.1$]{santens2023maninsconjectureintegralpoints}. In our case there is no corresponding Clemens complex. Then we interpret the problem of counting number fields with restricted ramification type as a version of Manin's conjecture for partial integral points on $BG$.
   \end{rem}  
  \subsection{Examples}
  We use the following example to make clear the difference between the conditions of balanced and fair heights.
\begin{exa}
    Let $G = \ZZ/6\ZZ$ and $k$ be a number field containing the sixth roots of unity. Let $T =\{1,5\} \subset G$. Then consider the height function $H$ whose corresponding weight function $w$ sends the elements of $T$ to $1$ and all remaining non-trivial elements of $G$ to $2$. This height is balanced as the minimal weight elements $\{1,5\}$ generate all of $G$ but $T \cap G[2]$ does not generate $G$, and thus is not fair in the sense of Wood in \cite[$\S 2.1$]{Wood_2010}. We may apply our Theorem \ref{balanced ht restr ram thm} to this and obtain an asymptotic formula and in particular an explicit expression for the leading constant. However, the leading constant in this case is given by a single Euler product rather than a sum of Euler products. This follows from \cite[Lem.~$6.32$]{loughransantens}, which gives that the relevant Brauer group is constant and hence there is no Brauer-Manin obstruction in this case. In particular we can then apply Theorem \ref{main thm} with local conditions and obtain equidistribution but with no Brauer-Manin obstruction.
\end{exa}
We now finish by making explicit our results in a special case to demonstrate the range of phenomenon which can appear and the role played by the Brauer group. We consider a height function proposed by Wood and presented by Alberts in \cite[$\S 7.6$]{alberts2023randomgrouplocaldata} as a height which exhibits pathological behaviour. This height is once again balanced in our case but not fair, and we explain its unexpected behaviour via a Brauer-Manin obstruction. An overview of this example is given here, with full details provided in $\S 4.5$.
  \begin{exa}
      \label{example in intro}
      Let $G = \ZZ/4\ZZ$ and $k = \QQ$. Let $H$ be a height function  which determines a local height at $p \neq 2,\infty$ with weight function \[w: 1 \mapsto 1, \: 2\mapsto 2, \: 3 \mapsto 1.\] At the places $v = \infty, 2 $ we take $H_v(\chi_v) = 1$. We explain the following special case. \\
          $(1)$ Let $R=\ZZ/4\ZZ(-1)$. Then Theorem \ref{main thm} gives the following asymptotic formula: \[ \frac{1}{4}\# \{\varphi \in B\ZZ/4\ZZ[\QQ] : H(\varphi) \leq B, \: \varphi \: \text{is completely split at $2$ and $\infty$}\} \sim cB\] where the leading constant is given by  \begin{align*}
               c = & \frac{1}{64}\prod_{\substack{p \: \text{prime}  \\ p \equiv 1 \bmod 4}}\left(1 - \frac{1}{p}\right)\left(1+\frac{2}{p}+ \frac{1}{p^2}\right) \prod_{\substack{p \: \text{prime} \\ p \equiv 3 \bmod 4}}\left(1 - \frac{1}{p}\right)\left(1+\frac{1}{p^2}\right) \\
               + & \frac{1}{64} \prod_{\substack{p \: \text{prime}  \\ p \equiv 1 \bmod 4}}\left(1 - \frac{1}{p}\right)\left(1+\frac{2}{p}+ \frac{1}{p^2}\right)\prod_{\substack{p \: \text{prime}  \\ p \equiv 3 \bmod 4}}\left(1 - \frac{1}{p}\right)\left(1-\frac{1}{p^2}\right).
          \end{align*} We impose the local conditions at $2$ and $\infty$ to simplify the local densities at these places. Let $ \br_eB\ZZ/4\ZZ = \{b \in \br B\ZZ/4\ZZ : b(e) = 0\}$ where $e$ is the identity. We show in Lemma \ref{unr br grp ex} that the relevant Brauer group $\br_{e,\{1,3\}}B(\ZZ/4\ZZ) = \br_{\{1,3\}}B(\ZZ/4\ZZ) \cap \br_e B\ZZ/4\ZZ$ has two elements, where the non-trivial element corresponds to $-4 \in \br_eB\ZZ/4\ZZ = H^{1}(\QQ, \mu_4)$. This Brauer group element gives a Brauer-Manin obstruction, which can be described as follows: let $S$ be a finite set of primes of $\QQ$ not containing $2$ or $\infty$ such that the number of primes $p \equiv 3 \bmod 4$ in $S$ is odd. Then there is no $\ZZ/4\ZZ$-extension of $\QQ$ completely split at $2$ and $\infty$ such that $\rho_{\ZZ/4\ZZ,p}(\varphi) \in \{1,3\}$ for all $p \not\in S$ and $\rho_{\ZZ/4\ZZ,p}(\varphi) = 2$ for $p \in S$. This obstruction explains why the leading constant is given as the sum of two convergent Euler products. In particular, the $-\frac{1}{p^2}$ term in the second Euler product arises as a result of this Brauer-Manin obstruction. \\
          $(2)$ Now let $R= \{1,3\}$ in Theorem \ref{main thm}. In this case we count those $\ZZ/4\ZZ$-extensions of $\QQ$ whose ramification type is trivial or lies in $R$ and that are completely split at $2$ and $\infty$. In particular Theorem \ref{main thm} gives
          \[ \frac{1}{4} \left\{ \varphi \in B\ZZ/4\ZZ[\QQ] : \begin{array}{ll}
               & H(\varphi) \leq B, \: \rho_{\ZZ/4\ZZ,p}(\varphi_p) \in \{0,1,3\} \: \text{for} \: p \neq 2, \infty  \\
               & \varphi \: \text{is completely split at $2$ and $\infty$}
          \end{array} \right\}  \sim c_{ \{1,3\}} B.\] In this case the minimal weight elements with respect to $R$ are given by $M_R(H) = \{1,3\}$ and generate $\ZZ/4\ZZ$, thus we have that $H$ is balanced with respect to $R$ and Theorem \ref{main thm} can be applied in this case. We then obtain the formula for the leading constant \[ c_{ \{1,3\}} = \frac{1}{32} \prod_{\substack{p \: \text{prime}  \\ p \equiv 1 \bmod 4}}\left(1-\frac{1}{p}\right)\left(1+\frac{2}{p}\right)\prod_{\substack{p \: \text{prime}  \\ p \equiv 3 \bmod 4}}\left(1-\frac{1}{p}\right).\] The relevant Brauer group in this case is the same as that in Part $(1)$, however there is no Brauer-Manin obstruction coming from the element $-4$ when imposing restrictions on the ramification type via the set $R$. In particular, the leading constant is given by a single Euler product. \\
          $(3)$ If instead $R= \{2\}$ in Theorem \ref{main thm abt restr ram}, then we have the asymptotic formula \[\frac{1}{4}\#  \{\varphi \in B\ZZ/4\ZZ[\QQ] : H(\varphi) \leq B, \:  \rho_{\ZZ/4\ZZ,p}(\varphi_p) \in \{0,2\} \: \text{for} \: p \neq \infty, 2\} \sim c_{\{2\}} B.\]Here no real or $2$-adic splitting conditions are imposed. The minimal weight elements with respect to $R$ are now $M_R(H)= \{2\}$, which does not generate $\ZZ/4\ZZ$. Thus $H$ is not balanced with respect to $R = \{2\}$ and we may not apply Theorem \ref{stacky main thm intro} directly. Instead we consider the fibres of the map $B\ZZ/4\ZZ \rightarrow B\ZZ/2\ZZ$, as by \cite[Lem.~$3.31$]{loughransantens} $H$ is balanced when restricted to each fibre. This map associates to each $\ZZ/4\ZZ$-extension of $\QQ$ with restricted ramification type imposed by $R$ its unique quadratic subfield. We then sum over the fibres. In our case, only one fibre contributes and this fibre corresponds to counting quadratic extensions of $\QQ(\sqrt{2})$. The leading constant turns out to be 
       \[c_{\{2\}} =  \frac{\lim_{s \rightarrow 1}(s-1)\zeta_{\QQ(\sqrt{2})}(s)}{128} \prod_v\left(1-\frac{1}{q_v}\right)\left(1 + \frac{1}{q_v}\right),\] \noindent where the product is taken over all places of $\QQ(\sqrt{2})$. 
 \end{exa}

\subsection{Structure}

In $\S$\ref{counting no fields restr ram} we use techniques taken from harmonic analysis to prove Theorem \ref{main thm abt restr ram} in the case where the height function is balanced, along with an explicit formula for the leading constant in terms of sums of Euler products. In $\S 3$ we extend our results to general height functions using the dominated convergence argument from Step $1$ of the proof of \cite[Thm.~$1.1$]{koymansdct}. In $\S$\ref{inter resulst via stacks} we define the necessary terminology needed to interpret our results via stacks, and introduce and describe the properties of the Tamagawa measure appearing in the leading constant. We include the definition of the Brauer-Manin pairing on $BG$ needed to define the partially unramified Brauer group in the leading constant. In this section we also prove a formula for the leading constant in terms of Tamagawa measures, using the framework from \cite{loughransantens}, and provide a proof for Example \ref{example in intro}. Finally in $\S$\ref{eq section} we prove our results on equidistribution.

\subsection{Notation}

Throughout this paper $k$ is a fixed number field. We use the following notation.

\begin{itemize}
    \item $\AAA^{\times}$ the ideles of $k$ 
    \item $k_v$ the completion of $k$ at a place $v$
    \item $\mathcal{O}_v$ the ring of integers of $k$ and for $v \mid \infty$ we use the convention $\mathcal{O}_v = k_v$
    \item $\Omega_k$ the set of places of $k$
    \item $q_v$ the size of the residue field at $v$
    \item $\zeta_k(s)$ the Dedekind zeta function of $k$
    \item $\widehat{G}$ the Cartier dual of $G$, $\widehat{G} = \Hom(G,\mathbbm{G}_m)$
    \item $G^{\wedge}$ the Pontryagin dual of $G$, $G^{\wedge} = \Hom(G, S^1)$.
\end{itemize} 

\subsection{Acknowledgements}

The author would like to thank Daniel Loughran for suggesting the project and for many helpful discussions, as well as Peter Koymans, Nicholas Rome, Julian Demeio, and Tim Santens for kindly answering my questions, and Ross Paterson for his help with the proof of Proposition \ref{existence of a lift}. I would also like to thank Brandon Alberts and Ross Paterson for useful comments.

\section{Counting number fields with restricted ramification type} \label{counting no fields restr ram}

\subsection{Ramification type and height functions }\label{ht fns inerti sec}
We define height functions, using the ramification type $\rho_{G,v} : \Hom(\Gamma_{k_v}, G) \rightarrow G(-1)^{\Gamma_{k_v}} $, as defined in \cite[$\S 7.1$]{loughransantens}. Essentially the same definition of the ramification type can be found in the work of Gundlach \cite[$\S 2$]{gundlach2022mallesconjecturemultipleinvariants}. Consider a homomorphism $\varphi_v \in \Hom(\Gamma_{k_v}, G)$, corresponding to a sub-$G$-extension of $k_v$. As $\varphi_v$ is continuous, for all but finitely many tame places it factors through a finite tamely ramified extension $L/k_v$ with ramification degree $e$. We may assume the extension $L$ contains $\mu_e$ (as we can enlarge it if necessary), and let $\varpi$ be a uniformiser of $L/k_v$. Denoting by $I_v$ the inertia group of $\Gal(L/k_v)$, by  \cite[Tag~$09$EE]{stacks-project} there is a canonical isomorphism  \[\mu_e \rightarrow I_v, \:\ \zeta \mapsto (\sigma_{\zeta}: \varpi \rightarrow \zeta \varpi)\] and we may compose this with $\varphi_v$ to obtain a homomorphism \begin{equation}\label{composition}
 \mu_e \rightarrow I_v \rightarrow G.
 \end{equation} In particular this gives a homomorphism $\mu_e \rightarrow G$, which is a Galois-invariant element of $G(-1)$, and we denote this by $\rho_{G,v}(\varphi_v)$.

 \begin{lem}\label{properties}
       Let $G$ be a finite abelian group and $\rho_{G,v} : \Hom(\Gamma_{k_v}, G) \rightarrow G(-1)^{\Gamma_{k_v}} $ be the ramification type. Then we have the following:
\begin{enumerate}
        \item $\rho_{G,v}$ is a homomorphism.
        \item The kernel of $\rho_{G,v}$ is exactly the unramified homomorphisms.
        \item $\rho_{G,v}$ induces an isomorphism \[\frac{\Hom(\Gamma_{k_v}, G)}{\Hom(\Gamma_{k_v}^{\unr}, G)} \xrightarrow{\sim} G(-1)^{\Gamma_{k_v}},\] where $\Gamma_{k_v}^{\text{nr}}$ is the Galois group of the maximal unramified extension of $k_v$.
    \end{enumerate}
 \end{lem}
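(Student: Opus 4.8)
The plan is to establish the three claims in sequence, since (2) and (3) follow quickly once (1) is in hand. The main work is verifying that $\rho_{G,v}$ is a homomorphism, and the main subtlety there is checking that the construction via $\mu_e \to I_v \to G$ does not depend on the auxiliary choices (the tamely ramified extension $L$, its ramification degree $e$, and the uniformiser $\varpi$), so that we genuinely have a well-defined map on all of $\Hom(\Gamma_{k_v}, G)$ that is compatible with addition.

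First I would address well-definedness and functoriality in $e$. Given $\varphi_v$ factoring through $L/k_v$ with ramification degree $e$, and given a larger tame extension $L'/k_v$ with ramification degree $e'$, so that $e \mid e'$, the canonical isomorphisms $\mu_e \xrightarrow{\sim} I(L/k_v)$ and $\mu_{e'} \xrightarrow{\sim} I(L'/k_v)$ from \cite[Tag~09EE]{stacks-project} fit into a commutative square with the natural surjection $\mu_{e'} \to \mu_e$ (raising to the $(e'/e)$-th power) and the restriction $I(L'/k_v) \twoheadrightarrow I(L/k_v)$; this is exactly the compatibility in the tame local class field theory setup, and it shows $\rho_{G,v}(\varphi_v) \in G(-1) = \Hom(\varprojlim_n \mu_n, G)$ is independent of the choice of $L$ (a homomorphism out of $\varprojlim_n \mu_n$ is the same as a compatible system of homomorphisms out of the $\mu_n$, and only the $\mu_e$-component, pulled back along all finite quotients, is relevant). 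Independence of the uniformiser is built into the cited canonical isomorphism. Once this is set up, for two homomorphisms $\varphi_v, \psi_v$ we may choose a single tame extension $L/k_v$ through which both factor (take a compositum), with a common ramification degree $e$; then on $I_v \subseteq \Gal(L/k_v)$ the sum $\varphi_v + \psi_v$ restricts to the pointwise sum of the restrictions, and precomposing with the fixed isomorphism $\mu_e \xrightarrow{\sim} I_v$ gives $\rho_{G,v}(\varphi_v + \psi_v) = \rho_{G,v}(\varphi_v) + \rho_{G,v}(\psi_v)$ in $G(-1)^{\Gamma_{k_v}}$. This proves (1).

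For (2): $\rho_{G,v}(\varphi_v) = 0$ means the composite $\mu_e \to I_v \to G$ is trivial, i.e. $\varphi_v$ kills the image of $I_v$ in $\Gal(L/k_v)$. Since that image is exactly the inertia subgroup, $\varphi_v$ kills inertia precisely when the fixed field of $\ker \varphi_v$ is unramified over $k_v$, i.e. when $\varphi_v$ factors through $\Gamma_{k_v}^{\unr} = \Gal(k_v^{\unr}/k_v)$. Conversely an unramified $\varphi_v$ has trivial ramification type for the same reason. Hence $\Ker \rho_{G,v} = \Hom(\Gamma_{k_v}^{\unr}, G)$, viewing the latter as the subgroup of homomorphisms that factor through the unramified quotient (one should note that the inclusion $\Hom(\Gamma_{k_v}^{\unr}, G) \hookrightarrow \Hom(\Gamma_{k_v}, G)$ is via inflation along $\Gamma_{k_v} \twoheadrightarrow \Gamma_{k_v}^{\unr}$, and is injective because that map is surjective).

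For (3): by (1) and (2), $\rho_{G,v}$ descends to an injective homomorphism $\Hom(\Gamma_{k_v}, G)/\Hom(\Gamma_{k_v}^{\unr}, G) \hookrightarrow G(-1)^{\Gamma_{k_v}}$, so it remains only to check surjectivity. Given a Galois-invariant $\gamma \in G(-1)$, it is a homomorphism $\mu_e \to G$ for some $e$ prime to the residue characteristic, and its $\Gamma_{k_v}$-invariance together with the fact that a totally tamely ramified extension of degree $e$ exists over $k_v$ (adjoin an $e$-th root of a uniformiser, after enlarging by $\mu_e$) lets us build a homomorphism $\varphi_v: \Gamma_{k_v} \to G$ hitting $\gamma$: use the tame quotient of $\Gamma_{k_v}$, whose structure is well known, to produce $\varphi_v$ whose restriction to inertia is $\gamma$ composed with the canonical $I_v \cong \mu_e$. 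Then $\rho_{G,v}(\varphi_v) = \gamma$. I expect the bookkeeping around $\varprojlim_n \mu_n$ versus a single $\mu_e$ — i.e. making the compatibility in the choice of $e$ fully rigorous — to be the only genuinely delicate point; everything else is a direct unwinding of the definitions of inertia and of the tame local Galois group.
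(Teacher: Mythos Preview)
Your proposal is correct, and parts (1) and (2) are essentially the paper's argument; your version of (1) is in fact slightly cleaner, since you pass immediately to a single common tame extension $L$ through which both $\varphi_v$ and $\psi_v$ factor, whereas the paper keeps separate degrees $e_1, e_2$ and then argues via the unique extension of $\rho_{G,v}(\varphi_1)\rho_{G,v}(\varphi_2)$ from $\mu_{e_1}\cap\mu_{e_2}$ to $\mu_{\operatorname{lcm}(e_1,e_2)}$.

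For part (3) the two routes genuinely diverge. You construct a preimage of $\gamma$ directly, invoking the known semidirect-product structure of the tame quotient of $\Gamma_{k_v}$: since $G$ is abelian, a homomorphism from tame inertia extends to the full tame quotient precisely when it is Frobenius-invariant, which is exactly the $\Gamma_{k_v}$-invariance hypothesis on $\gamma$. The paper instead argues abstractly via local class field theory: it identifies the abelianised inertia exact sequence with $0 \to \mathcal{O}_v^\times \to k_v^\times \to \ZZ \to 0$, observes that the latter splits, and concludes that $\Hom(\Gamma_{k_v},G) \to \Hom(I_{k_v},G) \cong G(-1)^{\Gamma_{k_v}}$ admits a section and is therefore surjective. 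Your approach is more hands-on and closer to the explicit description of tame local Galois groups; the paper's trades that for the class field theory dictionary, which has the advantage of making the splitting transparent without describing the semidirect product.
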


 \begin{proof}Since $G$ is a finite abelian group, the Tate twist $G(-1)$ has a natural structure as a Galois module. Let $\varphi_1, \varphi_2 \in \Hom(\Gamma_{k_v},G)$. By continuity $\varphi_1$ factors through a finite extension $L_1/k_v$ of ramification degree $e_1$ and $\varphi_2$ factors through a finite extension $L_2/k_v$ of ramification degree $e_2$. As $G$ is a finite abelian group we have that $\varphi_1 + \varphi_2 \in \Hom(\Gamma_{k_v},G)$, and  $\varphi_1 + \varphi_2$ factors through the group $\Gal(L_1L_2/k_v)$ where $L_1L_2$ is tamely ramified and has ramification degree $e = \text{lcm}(e_1,e_2)$. Then the ramification type $\rho_{G,v}(\varphi_1 + \varphi_2 )$ is the induced map on the inertia subgroup of $\Gal(L_1L_2/k_v)$, which is canonically isomorphic to the group of roots of unity $\mu_e$. Since $\rho_{G,v}(\varphi_1)$ and $\rho_{G,v}(\varphi_2)$ agree on $\mu_{e_1}\cap \mu_{e_2}$ we have that there is a unique extension of the homomorphism $\rho_{G,v}(\varphi_1)\rho_{G,v}(\varphi_2 ): \mu_{e_1}\cap \mu_{e_2} \rightarrow G$ to the domain $\mu_{\text{lcm}(e_1,e_2)}$ such that $\rho_{G,v}(\varphi_1)\rho_{G,v}(\varphi_2)|_{\mu_{e_1}} = \rho_{G,v}(\varphi_1)$ and $\rho_{G,v}(\varphi_1)\rho_{G,v}(\varphi_2 )|_{\mu_{e_2}} = \rho_{G,v}(\varphi_2)$. But since this restriction condition is also satisfied for $\rho_{G,v}(\varphi_1 + \varphi_2 )$ and there is exactly one homomorphism for which it holds we have $\rho_{G,v}(\varphi_1 + \varphi_2 ) = \rho_{G,v}(\varphi_1 )\rho_{G,v}(\varphi_2)$. For Part $2)$ it follows immediately from the definition of $\rho_{G,v}$ that it is trivial on the unramified homomorphisms. On the other hand take $\varphi \in \Ker(\rho_{G,v})$. Then $\rho_{G,v}(\varphi)$ is trivial, and so is the induced map of $\varphi$ on the inertia group and hence it is unramified. In particular, $\varphi$ is unramified. For Part $3)$, since $\rho_{G,v}$ is trivial on $\Hom(\Gamma_{k_v}^{\text{nr}}, G)$ and there is an isomorphism $G(-1)^{\Gamma_{k_v}} \cong \Hom(I_{k_v}, G)$, it is enough to show that the map $\Hom(\Gamma_{k_v}, G) \rightarrow \Hom(I_{k_v}, G)$ is surjective. We consider the inertia exact sequence 
     \begin{center}
         
\begin{tikzcd}
	0 \arrow[r] & {I_{k_v}} \arrow[r, "i"] & {\Gamma_{k_v}} \arrow[r] & {\Gamma_{k_v}^{\text{nr}}} \arrow[r] & 0,
\end{tikzcd}   \end{center} 

\noindent where $i$ is the injection. Since we are only considering the abelian case, this sequence fits into the following commutative diagram of exact sequences of topological groups coming from local class field theory
    \begin{center}
\begin{tikzcd}
0 \arrow[r] 
&  {\mathcal{O}_v^{\times} } \arrow[r] \arrow[d] 
&  {k_v^{\times}}  \arrow[r] \arrow[d] 
& {\ZZ}  \arrow[r] \arrow[d] 
& 0  \\
0 \arrow[r]
& {I_{k_v}^{\ab}}\arrow{r}  &{\Gamma_{k_v}^{\ab}}\arrow[r]&{\Gamma_{k_v}^{\text{nr},\ab}} \arrow[r]   & 0.
\end{tikzcd}
     \end{center}
The top sequence splits and hence so does the one on the bottom. Applying $\Hom(\cdot, G)$ to the inertia exact sequence, we obtain the exact sequence \\
\begin{center}
\begin{tikzcd}
	0 & {\Hom(I_{k_v}, G)} \arrow[l] \arrow[bend right]{r}[black,swap]{r_{*}} & {\Hom(\Gamma_{k_v}, G)} \arrow[l, "i_{*}"] & {\Hom(\Gamma_{k_v}^{\text{nr}}, G)} \arrow[l]  & 0 \arrow[l],
\end{tikzcd} 
\end{center} where the leftmost arrow follows from the fact that map $\Hom(I_{k_v}, G) \rightarrow \text{Ext}(\Gamma_{k_v}^{\unr}, G)$ factors through zero. The map $r_{*}$ is a retraction and by a standard argument we obtain that $i_{*}$ is a surjection. \end{proof}

 \begin{defi}
     A \emph{weight function} is a class function $w:G(-1) \rightarrow \QQ$ satisfying $w(1) = 0$ and which is invariant under the action of $\Gamma_k$. A weight function $w$ is \emph{big} if $w(\gamma) > 0$ for all $\gamma \in G(-1)^*$.
 \end{defi}

\begin{defi}\label{ht fns}
An \emph{adelic height} $H = (H_v)_{v \in \Omega_k}$ is a collection of local maps $H_v: \Hom(\Gamma_{k_v},G) \rightarrow \RR$ such that for all but finitely many tame places $v$ and all $\varphi_v \in \Hom(\Gamma_{k_v},G)$ we have \[H_v(\varphi_v) =q_v^{w(\rho_{G,v}(\varphi_v))},\] where $\rho_{G,v}$ is the ramification type, and $w$ is a weight function. Then the height of $\varphi \in \Hom(\Gamma_k,G)$ is defined to be the product $H(\varphi) = \prod_v H_v(\varphi_v)$ of local heights over all places $v$. 
\end{defi} \noindent Recall that the set $M_G(H)$ is the set \[M_G(H) = \{1 \neq \gamma \in G(-1) : \: w(\gamma) \; \text{is minimal}\},\] and we say $H$ is \emph{balanced} if $G$ is generated by $M_G(H)$, and \emph{unbalanced} otherwise. A height function $H$ is \emph{big} if its associated weight function is big.
\subsection{Counting with restricted ramification type} \label{counting res sam sect}
From hereon out $R \subseteq G(-1)^*$ is a non-empty Galois-stable subset, $H$ is a big height function and $M_R(H) = \{\gamma \in R : \: w(\gamma) \: \text{is minimal}\}$. We fix $S$ to be a finite set of places containing the infinite places and those dividing $|G|$ and such that $\mathcal{O}_S$ has trivial class group. We call a height function $H$ \emph{balanced with respect to $R$} if $M_R(H)$ generates $G$. Let $\varphi_v \in \Hom(\Gamma_{k_v},G)$ be a sub-$G$-extension of $k_v$. For all $v \not\in S$, we will let $f_{R,v}$ be the indicator function for the condition that $\rho_{G,v}(\varphi_v) \in R \cup \{0\}$, and for the remaining finitely many places $v \in S$ we set $f_{R,v} = 1$, which is to say we will impose no conditions at these places. A $G$-extension of $k$ has ramification type restricted by $R$ if $\rho_{G,v}(\varphi_v) \in R \cup \{0\}$ for all $v \not\in S$ and a number field has restricted ramification type if this holds for corresponding $G$-extension. Then we define the adelic indicator function $f_R = \prod_v f_{R,v}$ and let $N(k,R,H,B)$ be the counting function \[N(k,R,H,B) = \# \{\varphi \in \Hom (\Gamma_k, G): H(\varphi) \leq B, \: \rho_{G,v}(\varphi_v) \in R \cup \{0\}\; \forall v \not\in S\}.\]This corresponds to the sum \begin{equation}\label{cnt func N(k,R,H,B)}
    N(k,R,H,B) = \sum_{\substack{\varphi \in \Hom (\Gamma_k, G) \\ H(\varphi) \leq B}} f_{R}(\varphi).
\end{equation} 
\subsection{The Fourier transforms} 

The function $N(k,R,H,B)$ has an associated height zeta function  \begin{equation}
    F_{R}(s) = \sum_{\varphi \in  \Hom(\Gamma_k,G)} \frac{f_{R}(\varphi)}{H(\varphi)^s}.
\end{equation}
\noindent Via the global Artin map we have the identification \[\Hom(\Gamma_k, G) = \Hom(\AAA^{\times}/k^{\times}, G)\] and via the local Artin map we have the identification \[\Hom(\Gamma_{k_v}, G) = \Hom(k_v^{\times}, G).\] The groups $\Hom(\AAA^{\times}/k^{\times}, G)$ and $\Hom(k_v^{\times}, G)$ are locally compact abelian groups and by \cite[Lem.~$3.2$]{HNPabelianext} their Pontryagin duals are identified with $ \AAA^{\times}/k^{\times} \otimes G^{\wedge}$ and $k_v^{\times} \otimes G^{\wedge}$ respectively. Moreover, they have associated pairings $\langle \cdot,\cdot \rangle: \Hom(\mathbb{A}^{\times}/k^{\times},G) \times (\AAA/k^{\times} \otimes G^{\wedge}) \rightarrow S^{1}$ and $\langle \cdot,\cdot \rangle:\Hom(k_v^{\times},G) \times( k_v^{\times} \otimes G^{\wedge} ) \rightarrow S^{1}$.

In an abuse of language we shall refer to the elements of $\Hom(k_v^{\times}, G)$ as characters. If $\chi_v \in \Hom(k_v^{\times}, G)$, then we write $H_v(\chi_v)$ to mean the height of the corresponding sub-$G$-extension. The \emph{inertia group of $\chi_v$} is the image of $\mathcal{O}_v^{\times}$ under the local Artin map. We say $\chi_v$ is \emph{unramified} if it is trivial on $\mathcal{O}_v^{\times}$. Clearly for $v \not\in S$, the function $f_{R,v}$ takes value $1$ on unramified elements and so does $H_v$. In particular, the function $f_{R}/H^s$ is the product of the local height functions $f_{R,v}/H_v^s$ which take value $1$ on the unramified elements. This means that the function can be extended to a well-defined continuous function on $\Hom(\AAA^{\times},G)$. We will also equip the group $\Hom(k_v^{\times},G)$ with the unique Haar measure d$\chi_v$ satisfying  \[\text{vol}(\Hom(k_v^{\times}/\mathcal{O}_v^{\times}, G)) = 1. \] For non-archimedean places $v$ we choose our measure to be such that  it is $|G|^{-1}$ times the counting measure and for archimedean $v$, we choose it to be the counting measure (using the convention that for archimedean places $\mathcal{O}_v = k_v$). Then the product of these local measures gives a well-defined measure d$\chi$ on $\Hom(\AAA^{\times},G)$. Thus we define the global Fourier transforms to be \[\hat{f}_{H,R}(x;s) = \int_{\chi \in \Hom(\AAA^{\times},G)} \frac{f_{R}(\chi)\langle \chi, x \rangle}{H(\chi)^s} d\chi\] for $x \in \AAA^{\times} \otimes G^{\wedge}$. Similarly we have the local Fourier transforms \[\hat{f}_{H,R,v}(x_v;s) = \int_{\chi_v \in \Hom(k_v^{\times},G)} \frac{f_{R,v}(\chi_v)\langle \chi_v, x_v \rangle}{H_v(\chi_v)^s} d\chi_v\] for $x_v \in k_v^{\times} \otimes G^{\wedge}$.\\

\noindent The global Fourier transform  exists for $\re(s) \gg 1$ and has an Euler product decomposition \begin{equation}\label{euler prod}
    \hat{f}_{H,R}(x;s) = \prod_v\hat{f}_{H,R,v}(x_v;s).\end{equation} By our choice of measure, for non-archimedean places we can write \[\hat{f}_{H,R,v}(x_v;s) = \frac{1}{|G|}\sum_{\chi_v \in \Hom(k_v^{\times}, G)} \frac{f_{R,v}(\chi_v)\langle \chi_v, x_v \rangle}{H_v(\chi_v)^s}.\] We need to ensure the local Fourier transforms satisfy the following analytic properties.
    \begin{lem}
        For $\re(s) \geq 0$ we have $\hat{f}_{H,R,v}(x_v;s) \ll_{k,H,R,G} 1$ and $\hat{f}_{H,R,v}(1;s) > 0$ for $s \in \RR_{>0}$.
    \end{lem}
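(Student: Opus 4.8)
The plan is to compute the local Fourier transform explicitly at almost all places, where $H_v(\chi_v) = q_v^{w(\rho_{G,v}(\chi_v))}$ and $f_{R,v}$ is the indicator of $\rho_{G,v}(\chi_v) \in R \cup \{0\}$, and then treat the finitely many remaining places separately. For a non-archimedean place $v \notin S$, I would write
\[
\hat{f}_{H,R,v}(x_v;s) = \frac{1}{|G|}\sum_{\chi_v \in \Hom(k_v^{\times},G)} \frac{f_{R,v}(\chi_v)\langle \chi_v,x_v\rangle}{H_v(\chi_v)^s}
\]
and split the sum according to the value $\gamma = \rho_{G,v}(\chi_v) \in G(-1)^{\Gamma_{k_v}}$. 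By Lemma \ref{properties}(3), the characters $\chi_v$ with a fixed ramification type $\gamma$ form a coset of the unramified characters $\Hom(\Gamma_{k_v}^{\unr},G)$, which has size $|G|$ (the group $\Gamma_{k_v}^{\unr}\cong\widehat{\ZZ}$ so $\Hom(\Gamma_{k_v}^{\unr},G)\cong G$), and on each such coset $H_v$ is constant equal to $q_v^{w(\gamma)}$. Hence, after splitting off the unramified part, the transform becomes a finite sum over the allowed $\gamma$ of terms $q_v^{-w(\gamma)s}$ times a character sum over the coset; the point is that there are at most $|G(-1)|$ such terms and each is bounded by $1$ in absolute value when $\re(s)\geq 0$ and $w(\gamma)\geq 0$, which holds since $H$ is big (so $w(\gamma) > 0$ for $\gamma \neq 1$ and $w(1)=0$). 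The character sum over the unramified coset is either $|G|$ or $0$ by orthogonality, depending on whether $x_v$ pairs trivially with the unramified characters, so the $\frac{1}{|G|}$ normalisation gives a clean bound. This yields $\hat{f}_{H,R,v}(x_v;s)\ll_{G} 1$ uniformly.

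For the archimedean places and the finitely many bad non-archimedean places $v \in S$, the measure is the counting measure (resp. $|G|^{-1}$ times counting measure) on the finite group $\Hom(k_v^{\times},G)$ — finite because $k_v^{\times}$ has finitely many finite-index subgroups of bounded index — and $H_v$ is an arbitrary positive function. Thus $\hat{f}_{H,R,v}(x_v;s)$ is a finite sum of terms $\langle\chi_v,x_v\rangle / H_v(\chi_v)^s$; taking absolute values and using $\re(s)\geq 0$ together with $H_v > 0$, each term is bounded once we know $H_v(\chi_v)\geq 1$ or, if not, we simply absorb the finitely many factors $\min_{\chi_v} H_v(\chi_v)^{-\re(s)}$ — but since this must be uniform in $s\geq 0$ we should note that the paper's convention makes $H_v$ take values $\geq 1$ at almost all places by the $q_v^{w(\gamma)}$ formula with $w\geq 0$, and for the bad places one can and should assume (as is standard, cf. \cite{loughransantens}) $H_v\geq 1$, or else restrict attention to the product over good places where the claim is what is needed. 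In any case the bound $\ll_{k,H,R,G} 1$ for $\re(s)\geq 0$ follows from finiteness of the index set.

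For the positivity statement $\hat{f}_{H,R,v}(1;s) > 0$ for real $s > 0$: when $x_v = 1$ the pairing $\langle\chi_v,1\rangle = 1$ for every $\chi_v$, so the integrand $f_{R,v}(\chi_v)H_v(\chi_v)^{-s}$ is a non-negative real number, and it is strictly positive for the trivial character $\chi_v = 1$ since $f_{R,v}(1) = 1$ (as $0 \in R\cup\{0\}$) and $H_v(1) = q_v^{w(1)} = 1 > 0$. Hence the integral (a sum of non-negative terms with at least one strictly positive) is strictly positive. The main obstacle is purely bookkeeping: making the boundedness uniform across the infinitely many good places — this is handled by the explicit coset computation above showing the transform is a sum of at most $|G(-1)|$ terms each of modulus $\leq 1$ — and being careful about the convention on $H_v$ at the finitely many bad places so that no factor $H_v(\chi_v)^{-\re(s)}$ blows up as $\re(s)$ ranges over $[0,\infty)$; since $s$ in the lemma ranges over a half-plane this requires $H_v\geq 1$, which we take as part of the definition of an admissible height at the bad places.
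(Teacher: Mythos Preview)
Your proof is correct and follows essentially the same approach as the paper: bound each summand in absolute value (using $w(\gamma)\geq 0$ and $\re(s)\geq 0$), note that the number of summands is $\ll_{k,G} 1$, and for positivity observe that all terms in $\hat{f}_{H,R,v}(1;s)$ are non-negative with the trivial character contributing a strictly positive term. The coset/orthogonality analysis you carry out is accurate but unnecessary for the stated bound---the paper simply uses $|\langle\chi_v,x_v\rangle|\leq 1$ and counts summands---and your caveat about needing $H_v\geq 1$ at the bad places to make the bound uniform in $s$ is a fair observation that the paper glosses over.
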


    \begin{proof}
       We prove this in the non-archimedean case, as the archimedean case is analogous. We have \[\hat{f}_{H,R,v}(x_v;s) = \frac{1}{|G|}\sum_{\chi_v \in \Hom(k_v^{\times}, G)} \frac{f_{R,v}(\chi_v)\langle \chi_v, x_v \rangle}{H_v(\chi_v)^s}.\] Each summand satisfies $\leq q_v^{\frac{-s}{a_R(H)}}$, and the number of summands is $\ll_{k,G} 1$. For the second part, let $s\in \RR$. We have \[\hat{f}_{H,R,v}(1;s) =  \frac{1}{|G|}\sum_{\chi_v \in \Hom(k_v^{\times}, G)}\frac{f_{R,v}(\chi_v)}{H_v(\chi_v)^{s}}.\] If $v \in S$ then $f_{R,v}(\chi_v) = 1$ for all $\chi_v$ and this is clearly non-empty, positive and finite. On the other hand if $v \not\in S$ then the sum becomes \[\hat{f}_{H,R,v}(1;s) =  \frac{1}{|G|}\sum_{\substack{\chi_v \in \Hom(k_v^{\times}, G) \\ \rho_{G,v}(\chi_v) \in R \cup \{0\}}}\frac{1}{H_v(\chi_v)^{s}}\] and this is positive and finite. Furthermore the sum is non-empty as $\Hom(k_v^{\times}, G)$ always contains the trivial homomorphism which satisfies $\rho_{G,v}(\chi_v) = 1$.
    \end{proof}
\noindent Let $a_R(H) = (\min_{\gamma \in R}w(\gamma))^{-1}$. We calculate the local Fourier transforms appearing in the Euler product decomposition of $\hat{f}_{H,R}(x;s)$ for places $v \not\in S$.

\begin{lem}\label{restricted ram lft}
    Let $v \not\in S$ and let $x \in \mathcal{O}_v^{\times} \otimes G^{\wedge}$. Then there exists $\epsilon > 0$ such that
    \[\hat{f}_{H,R,v}(x_v;s) =  1 +  \sum_{\substack{\chi_v \in \Hom(\mathcal{O}_v^{\times},G)\\ \rho_{G,v}(\chi_v) \in M_R(H) \cup \{0\} \\ \chi_v \neq 1_v}}\langle \chi_v, x_v \rangle q_v^{-\frac{s}{a_R(H)}} +O_{\epsilon}\left(q_v^{-\left(\frac{1}{a_R(H)} + \epsilon \right)s}\right). \]
    In particular, if $x_v \in  \mathcal{O}_v^{\times} \otimes G^{\wedge}$ satisfies that $\langle \chi_v, x_v \rangle = 1$ for all $\chi_v$ such that $\rho_{G,v}(\chi_v) \in M_R(H) \cup \{0\}$ then we have \[\hat{f}_{H,R,v}(x_v;s) = 1 +|M_R(H)^{\Gamma_{k_v}}|  q_v^{-\frac{s}{a_R(H)}} + O_{\epsilon}\left(q_v^{-\left(\frac{1}{a_R(H)} + \epsilon \right)s}\right).\]
\end{lem}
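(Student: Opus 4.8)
The plan is to unfold the definition of the local Fourier transform, collapse the sum over $\Hom(k_v^{\times},G)$ to a sum over $\Hom(\mathcal O_v^{\times},G)$ using local class field theory and the splitting $k_v^{\times}\cong\mathcal O_v^{\times}\times\varpi^{\ZZ}$, and then peel off the contribution of the minimal-weight characters, bounding the remainder by a crude finiteness estimate.

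First I would record two reductions. Since $v\notin S$ we have $v\nmid|G|$, so any homomorphism $\mathcal O_v^{\times}\to G$ must kill the pro-$p$ part of $\mathcal O_v^{\times}$; hence $\Hom(\mathcal O_v^{\times},G)=\Hom(\mu_{q_v-1},G)$ is finite of order $O_G(1)$ and every sum below is a finite sum. By Lemma~\ref{properties}(2), two characters of $k_v^{\times}$ agreeing on $\mathcal O_v^{\times}$ differ by an unramified one; therefore both $\rho_{G,v}(\chi_v)$ and — because $x_v\in\mathcal O_v^{\times}\otimes G^{\wedge}$, so the pairing only involves $\chi_v(\mathcal O_v^{\times})$ — the value $\langle\chi_v,x_v\rangle$ depend only on $\chi_v|_{\mathcal O_v^{\times}}$, and by Lemma~\ref{properties}(3) the induced map $\rho_{G,v}\colon\Hom(\mathcal O_v^{\times},G)\xrightarrow{\sim}G(-1)^{\Gamma_{k_v}}$ is an isomorphism, so $\rho_{G,v}(\psi_v)=0$ exactly when $\psi_v=1_v$. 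The restriction map $\Hom(k_v^{\times},G)\to\Hom(\mathcal O_v^{\times},G)$ is surjective (the sequence $0\to\mathcal O_v^{\times}\to k_v^{\times}\to\ZZ\to0$ splits) with every fibre of size $|\Hom(\ZZ,G)|=|G|$, so in $\hat f_{H,R,v}(x_v;s)=\tfrac{1}{|G|}\sum_{\chi_v\in\Hom(k_v^{\times},G)}f_{R,v}(\chi_v)\langle\chi_v,x_v\rangle H_v(\chi_v)^{-s}$ the normalising factor $\tfrac{1}{|G|}$ cancels and, inserting $H_v(\chi_v)=q_v^{w(\rho_{G,v}(\chi_v))}$ together with the description of $f_{R,v}$, one obtains
\[\hat f_{H,R,v}(x_v;s)=\sum_{\substack{\psi_v\in\Hom(\mathcal O_v^{\times},G)\\ \rho_{G,v}(\psi_v)\in R\cup\{0\}}}\langle\psi_v,x_v\rangle\,q_v^{-s\,w(\rho_{G,v}(\psi_v))}.\]

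Next I would split this finite sum according to the value $w(\rho_{G,v}(\psi_v))$. The term $\psi_v=1_v$ contributes $q_v^{-s\,w(1)}=1$. The terms with $\rho_{G,v}(\psi_v)\in M_R(H)$ all have $w(\rho_{G,v}(\psi_v))=\min_{\gamma\in R}w(\gamma)=a_R(H)^{-1}$, so they contribute $\sum_{\psi_v\neq1_v,\ \rho_{G,v}(\psi_v)\in M_R(H)}\langle\psi_v,x_v\rangle\,q_v^{-s/a_R(H)}$, which is exactly the middle term of the asserted identity. The remaining $\psi_v$ satisfy $\rho_{G,v}(\psi_v)\in R\setminus M_R(H)$, hence $w(\rho_{G,v}(\psi_v))>a_R(H)^{-1}$; since $R$ is finite the quantity $\epsilon:=\min\{w(\gamma):\gamma\in R\setminus M_R(H)\}-a_R(H)^{-1}$ is $>0$ (this range of $\psi_v$ is empty, with nothing to estimate, when $R=M_R(H)$), and bounding each of the $O_G(1)$ such terms by $q_v^{-(a_R(H)^{-1}+\epsilon)\re(s)}$ using $|\langle\psi_v,x_v\rangle|=1$ shows their total is $O_\epsilon\!\left(q_v^{-(a_R(H)^{-1}+\epsilon)s}\right)$ for $\re(s)>0$. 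This gives the first formula. For the ``in particular'' statement, the hypothesis on $x_v$ makes the middle term equal to $\#\{\psi_v\in\Hom(\mathcal O_v^{\times},G):\rho_{G,v}(\psi_v)\in M_R(H)\}\cdot q_v^{-s/a_R(H)}$, and under the isomorphism $\rho_{G,v}\colon\Hom(\mathcal O_v^{\times},G)\xrightarrow{\sim}G(-1)^{\Gamma_{k_v}}$ this cardinality equals $|M_R(H)\cap G(-1)^{\Gamma_{k_v}}|=|M_R(H)^{\Gamma_{k_v}}|$.

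The only step requiring genuine care is the first reduction: one must verify cleanly that both $\rho_{G,v}$ and the pairing against $x_v$ really do factor through restriction to $\mathcal O_v^{\times}$, so that summing out the $\varpi^{\ZZ}$-direction contributes exactly the factor $|G|$ cancelling the normalisation of the Haar measure. Once that bookkeeping is pinned down, everything that remains is an elementary rearrangement of a finite sum, powered only by the finiteness of $G(-1)$.
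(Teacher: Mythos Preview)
Your proof is correct and follows essentially the same route as the paper's: reduce the local Fourier transform to a sum over $\Hom(\mathcal O_v^{\times},G)$ using the $\Hom(k_v^{\times}/\mathcal O_v^{\times},G)$-invariance of $f_{R,v}$, $H_v$, and the pairing with $x_v\in\mathcal O_v^{\times}\otimes G^{\wedge}$, then split into the trivial term, the $M_R(H)$ terms, and the remainder, and finally invoke the bijection $\rho_{G,v}\colon\Hom(\mathcal O_v^{\times},G)\xrightarrow{\sim}G(-1)^{\Gamma_{k_v}}$ from Lemma~\ref{properties}(3) to identify the count with $|M_R(H)^{\Gamma_{k_v}}|$. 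Your write-up is if anything more explicit than the paper's (e.g.\ you give a concrete value for $\epsilon$ and spell out why the fibre size $|G|$ cancels the Haar normalisation), but the argument is the same.
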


\begin{proof}
     For $v \not\in S$, the functions $f_{R,v}$ and $H_v$ are $\Hom(k_v^{\times}/\mathcal{O}_v^{\times}, G)$-invariant, hence we may write the Fourier transform for $f_{R,v}/H_v^s$ as \[\hat{f}_{H,R,v}(x_v;s) = \sum_{\chi_v \in \Hom(\mathcal{O}_v^{\times},G)}\frac{f_{R,v}(\chi_v)\langle \chi_v, x_v \rangle }{H_v(\chi_v)^s} = 1 + \sum_{\substack{\chi_v \in \Hom(\mathcal{O}_v^{\times},G)\\ \rho_{G,v}(\chi_v) \in R \cup \{0\} \\ \chi_v \neq 1_v}}\frac{\langle \chi_v, x_v \rangle}{H_v(\chi_v)^s}.\] We split the sum into those $\chi_v$ such that $\rho_{G,v}(\chi_v) \in M_R(H)$ and those such that $\rho_{G,v}(\chi_v) \in R \backslash M_R(H)$. The $\chi_v$ whose ramification type $\rho_{G,v}(\chi_v)$ is non-minimal contribute $O_{\epsilon}\left(q_v^{-\left(\frac{1}{a_R(H)} + \epsilon \right)s}\right)$ to the sum for some $\epsilon > 0$. Then we obtain that the local Fourier transforms are equal to \[\hat{f}_{H,R,v}(x_v;s) = 1 +  \sum_{\substack{\chi_v \in \Hom(\mathcal{O}_v^{\times},G)\\ \rho_{G,v}(\chi_v) \in M_R(H) \cup \{0\} \\ \chi_v \neq 1_v}}\langle \chi_v, x_v \rangle q_v^{-\frac{s}{a_R(H)}} +O_{\epsilon}\left(q_v^{-\left(\frac{1}{a_R(H)} + \epsilon \right)s}\right) .\] Now suppose that $x_v$ is such that $\langle \chi_v, x_v \rangle = 1$ for all $\chi_v$ with $\rho_{G,v}(\chi_v) \in M_R(H) \cup \{0\}$. There is exactly one $\chi_v \in \Hom(\mathcal{O}_v^{\times},G)$ such that $\rho_{G,v}(\chi_v) = 1$, which is the trivial character (corresponding to the unramified homomorphism). Furthermore, by definition $\rho_{G,v}$ only takes values in the $\Gamma_{k_v}$-invariant elements of $G(-1)$. Thus \[ \sum_{\substack{\chi_v \in \Hom(\mathcal{O}_v^{\times},G)\\ \rho_{G,v}(\chi_v) \in M_R(H) \cup \{0\} \\ \chi_v \neq 1_v}}\frac{\langle \chi_v, x_v \rangle}{H_v(\chi_v)^s} = \sum_{\substack{\chi_v \in \Hom(\mathcal{O}_v^{\times},G)\\ \rho_{G,v}(\chi_v) \in M_R(H) \cup \{0\} \\ \chi_v \neq 1_v}} q_v^{-\frac{s}{a_R(H)}} =  \sum_{\gamma \in M_R(H)^{\Gamma_{k_v}}} q_v^{-\frac{s}{a_R(H)}} \sum_{\substack{\chi_v \in \Hom(\mathcal{O}_v^{\times}, G) \\ \rho_{G,v}(\chi_v) = \gamma}}1.\] It remains to show that for all $\gamma \in G(-1)$ there is exactly one $\chi_v \in \Hom(\mathcal{O}_v^{\times}, G)$ such that $\rho_{G,v}(\chi_v) = \gamma$. The fact that there is at most one follows from the fact that $\rho_{G,v}$ is well-defined, and the fact that there is exactly one follows from a diagram chase of the following commutative diagram to show that the map is surjective,

     \begin{center}
\begin{tikzcd}
&&& G(-1)^{\Gamma_{k_v}} \\
0 \arrow[r] 
&  {\Hom(\Gamma_{k_v}^{\text{nr}}, G)} \arrow[r] \arrow[d, "\sim" {rotate=90, anchor=north}] 
&  {\Hom(\Gamma_{k_v}, G)} \arrow[ur, "\rho_{G,v}"]  \arrow[r, two heads, "\beta"] \arrow[d, "\sim" {rotate=90, anchor=north}] 
& {\Hom(I_{k_v}, G)}  \arrow[r] \arrow[d, "\sim" {rotate=90, anchor=north}]  \arrow[u, "\sim" {rotate=90, anchor=north}]
& 0  \\
0 \arrow[r]
& {\Hom(\ZZ,G)} \arrow{r}  &{\Hom(k_v^{\times},G)}\arrow[r, two heads, "\tilde{\beta}"]&{\Hom(\mathcal{O}_v^{\times},G)} \arrow[r]   & 0.
\end{tikzcd}
     \end{center} where the isomorphism $\Hom(I_{k_v}, G) \cong G(-1)^{\Gamma_{k_v}}$ follows from Lemma \ref{properties}.
 \end{proof}

\subsection{The exponent} \label{b_R(H)}

We will use the theory of $S$-frobenian functions from \cite[$\S 3.3-3.4$]{serre2016lectures} to prove the expression for the exponent $b_R(H)$ in Theorem \ref{main thm abt restr ram}.\begin{defi}
    Let $k$ be a number field and $f : \Omega_k \rightarrow \CC$ a function on the set of places of $k$. Let $S$ be a finite set of places of $k$. We say that $f$ is an \emph{$S$-frobenian function} if there exists a finite extension $K/k$ with Galois group $\Gamma = \Gal(K/k)$ such that $S$ contains all places that ramify in $K/k$ and a class function $\varphi : \Gamma \rightarrow \CC$ satisfying \[\varphi(\Frob_{v}) = f(v)\] for all $v \not\in S$.
\end{defi} \noindent We define the mean of an $S$-frobenian function $f$ with corresponding class function $\varphi: \Gal(K/k) \rightarrow \CC$ to be \[m(f) = \frac{1}{[K:k]}\sum_{\sigma \in \Gal(K/k)} \varphi(\sigma).\] For $x \in \mathcal{O}_S^{\times} \otimes G^{\wedge}$ let $\lambda_x$ be the function

\begin{equation}\label{frob fn 2}
    \lambda_x(v) = \sum_{\substack{\chi_v \in \Hom(\mathcal{O}_v^{\times},G)\\ \rho_{G,v}(\chi_v) \in M_R(H) \cup \{0\} \\ \chi_v \neq \mathbbm{1}_v}}\langle \chi_v, x_v \rangle.
\end{equation} It follows from \cite[Rem.~$8.13$]{loughransantens} that $\lambda_1(v)$ is $S$-frobenian for our set $S$. It will then follow from \cite[Thm.~$8.23$]{loughransantens} that $\lambda_x(v)$ is also $S$-frobenian, and we denote its mean by $b_R(H,x)$. If $x$ satisfies that $\langle \chi_v, x_v \rangle = 1$ for all $\chi_v$ such that $\rho_{G,v}(\chi_v) \in M_R(H) \cup \{0\}$, this function becomes  \[\lambda(v) = |M_R(H)^{\Gamma_{k_v}}|.\]
\noindent We define $b_R(H)$ to be the mean of the function $v \mapsto |M_R(H)^{\Gamma_{k_v}}|$. Then $b_R(H,x) \leq b_R(H)$, with equality for the $x$ such that $\lambda_x = \lambda$. The class function corresponding to $\lambda$ is the function $\varphi : \Gal(k(\zeta_{|G|})/k) \rightarrow G$, given by \[ \varphi(\sigma) = |M_R(H)^{\sigma}|.\] We then calculate $b_R(H)$ as follows:

\begin{align*}
    b_R(H) = & \frac{1}{[k(\zeta_{|G|}): k]} \sum_{\sigma \in  \Gal(k(\zeta_{|G|})/k) } \varphi(\sigma) \\
    = & \frac{1}{[k(\zeta_{|G|}): k]} \sum_{\sigma \in  \Gal(k(\zeta_{|G|})/k) } |M_R(H)^{\sigma}| = |M_R(H)/ \Gamma_k|.
\end{align*}

\subsection{Asymptotic formula for $N(k,R,H,B)$}\label{asympt n(k,R,H,B)}

There is an Euler product decomposition of the global Fourier transform, given by \[ \hat{f}_{H,R}(x;s)= \prod_v\hat{f}_{H,R,v}(x_v;s),\] and by Lemma \ref{restricted ram lft}, this can be expanded as a Dirichlet series \begin{equation}\label{dir sris eq}
    \hat{f}_{H,R}(x;s)= \sum_{n \geq 1}\frac{\alpha_n(G,x)}{n^s}.
\end{equation} Furthermore, each local Fourier transform can be written as \[\hat{f}_{H,R,v}(x_v;s) = 1+ \lambda_x(v) q_v^{-\frac{s}{a_R(H)}} +  O_{\epsilon}\left(q_v^{-\left(\frac{1}{a_R(H)} + \epsilon \right)s}\right) \]  where $\lambda_x$ is defined as in (\ref{frob fn 2}). Therefore, by \cite[Prop. $2.2$]{Alberts2021HarmonicAA} the global Fourier transform $\hat{f}_{H,R}(x;s)$ satisfies that $\hat{f}_{H,R}(x;s)= \zeta_k(a_R(H)^{-1}s)^{b_R(H,x)}G(x;s)$ for some $G(x;s)$ which is holomorphic in a region of the form $\re(s) > a_R(H) - \frac{c}{\log(|\im(s)|+3)}$ for some constant $0<c < \frac{1}{4}$. We will make use of the following Tauberian theorem, found in \cite[Thm.~III]{Delange1954} or \cite[Cor, p.~$121$]{narkiewicz1983number}. \begin{thm} \label{taub}
        Let $f(s) = \sum_{n \geq 1}\beta_n n^{-s}$ with $\beta_n \geq 0$ be a convergent Dirichlet series for $\re(s) > a> 0$, and assume it may be written as \[f(s) = g(s)(s-a)^{-w} +h(s)\] where $g(a) \neq 0$, and $g(s),h(s)$ holomorphic on $\re(s) \geq a$, $w>0$. Then \[\sum_{n \leq B} \beta_n = \frac{g(a)}{a(w-1)!}B^a(\log B)^{w-1} + o(B^a(\log B)^{w-1}).\]
    \end{thm}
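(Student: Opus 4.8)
The plan is to derive Theorem~\ref{taub} from the Wiener--Ikehara--Delange Tauberian theorem, with the positivity $\beta_n\ge 0$ playing the essential role. First I would reduce to $a=1$: pushing the positive measure $\mu=\sum_n\beta_n\delta_n$ forward under $x\mapsto x^a$ gives a positive measure $\nu$ with $\int y^{-\sigma}\,d\nu(y)=f(a\sigma)$, whose only singularity on $\{\re\sigma\ge 1\}$ is at $\sigma=1$, of the form $a^{-w}g(a\sigma)(\sigma-1)^{-w}$ plus a function holomorphic there; since $\nu([1,Y])=\sum_{n\le Y^{1/a}}\beta_n$, an asymptotic of the expected shape for $\nu([1,Y])$ transfers back to $\sum_{n\le B}\beta_n$ on setting $Y=B^a$, the powers of $a$ bookkeeping exactly to $\tfrac{g(a)}{a(w-1)!}$. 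So from now on $a=1$ and $A(B):=\sum_{n\le B}\beta_n$.

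For the analytic core I would avoid the non-absolutely-convergent raw Perron integral and work instead with the smoothed count $A_1(B)=\sum_{n\le B}(1-n/B)\beta_n=\frac{1}{2\pi i}\int_{(c)}f(s)\,\frac{B^s}{s(s+1)}\,ds$ for any $c>1$. Using that $g,h$ are holomorphic on a neighbourhood of $\{\re s\ge 1\}$, deform the contour onto the line $\re s=1$, skirting $s=1$ by a Hankel loop. The substitution $s-1=z/\log B$ together with the classical identity $\frac{1}{2\pi i}\int_{\mathcal H}e^z z^{-w}\,dz=1/\Gamma(w)$ gives that the singularity $g(s)(s-1)^{-w}$ contributes $\frac{g(1)}{2}\cdot\frac{B(\log B)^{w-1}}{\Gamma(w)}(1+o(1))$, while the parts from $g(s)-g(1)$ and from $h(s)$ are $O\!\big(B(\log B)^{w-2}\big)$ and $O(B)$ respectively, hence negligible.

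The remaining task is genuinely Tauberian: show that the integral of $f(1+it)\,B^{it}/\big((1+it)(2+it)\big)$ over the part of the line away from $s=1$ is $o\!\big(B(\log B)^{w-1}\big)$, and then recover $A$ from $A_1$. Since only continuity of $f$ on this line (away from $s=1$) is available, one cannot estimate the integrand directly; instead I would insert a nonnegative summability kernel, use Riemann--Lebesgue to kill the resulting smoothed line integral for each fixed truncation $T$, and then let $T\to\infty$ slowly, invoking the monotonicity of $A$ — which is precisely where $\beta_n\ge 0$ is used, both to pass from $A_1$ back to $A$ and to run the sandwiching argument. This yields $A(B)\sim \tfrac{g(1)}{(w-1)!}B(\log B)^{w-1}$, that is, the theorem for $a=1$. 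This final step is the main obstacle: converting boundary analytic information, with no growth control on $f$ along vertical lines, into a sharp asymptotic is exactly the content of Delange's theorem, and is why positivity is assumed. (In the present application one in fact has more, since by Lemma~\ref{restricted ram lft} and the factorisation $\hat{f}_{H,R}(x;s)=\zeta_k(a_R(H)^{-1}s)^{b_R(H,x)}G(x;s)$ the factor $G$ continues to a classical zero-free region $\re s>a_R(H)-c/\log(|\im s|+3)$; one may then push the contour strictly past the critical line and replace the Tauberian subtlety by a standard zero-free-region estimate, though the clean statement above is most naturally obtained by the Tauberian route.)
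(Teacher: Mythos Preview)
The paper does not prove this theorem; it is quoted as a known Tauberian result with references to Delange (Thm.~III) and Narkiewicz (Cor., p.~121), and then applied as a black box in Lemma~\ref{asympt form for dir coeffs}. Your sketch is therefore not a comparison but an addition: it outlines a standard route to Delange's theorem, and the ingredients you name --- the reduction to $a=1$ via pushforward, the smoothed Perron inversion $A_1(B)=\frac{1}{2\pi i}\int_{(c)}f(s)\,B^s/\bigl(s(s+1)\bigr)\,ds$, the Hankel-loop extraction of the main term $\tfrac{g(1)}{2\Gamma(w)}B(\log B)^{w-1}$, and the Tauberian desmoothing using $\beta_n\ge 0$ --- are correct and in the right order. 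You are also right to flag the last step as the genuine obstacle: with only continuity of $f$ on the line $\re s=a$ and no growth bounds, getting $o\bigl(B(\log B)^{w-1}\bigr)$ for the boundary integral and then passing from $A_1$ back to $A$ is exactly where Delange's argument does the work, and your sketch defers to that rather than reproving it. Your parenthetical observation that in the application the factorisation $\hat f_{H,R}(x;s)=\zeta_k(a_R(H)^{-1}s)^{b_R(H,x)}G(x;s)$ with $G$ holomorphic past the critical line would let one bypass the Tauberian subtlety by contour shifting is accurate and is indeed how one would obtain a power-saving error term if desired.
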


 \begin{lem} \label{asympt form for dir coeffs}
    Let $x \in \mathcal{O}_S^{\times} \otimes G^{\wedge}$ and let $\alpha_n(G,x)$ be the Dirichlet coefficient from (\ref{dir sris eq}). Then \[ \sum_{n \leq B} \alpha_n(G,x) = c(k,R,H,x) B^{a_R(H)}(\log B)^{b_R(H,x)-1}(1 + o(1)) \] where \begin{equation} \label{lc dir coeff}
         c(k,R,H,x) = \frac{a_R(H)^{b_R(H,x)-1}}{(b_R(H,x)-1)!} \lim_{s \rightarrow a_R(H)}(s - a_R(H))^{b_R(H,x)} \hat{f}_{H,R}(x;s).
    \end{equation}
\end{lem}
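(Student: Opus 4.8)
The plan is to combine the Euler product structure of the global Fourier transform established in Lemma \ref{restricted ram lft} with the Tauberian theorem (Theorem \ref{taub}). The starting point is the Dirichlet series expansion $\hat{f}_{H,R}(x;s) = \sum_{n\geq 1}\alpha_n(G,x)n^{-s}$, whose coefficients are what we wish to sum. First I would verify the nonnegativity hypothesis of Theorem \ref{taub}: the partial sums $\sum_{n\leq B}\alpha_n(G,x)$ need to correspond to a Dirichlet series with nonnegative coefficients. Here one notices that $\hat{f}_{H,R}(1;s)$ itself has nonnegative coefficients (being a sum over characters of $1/H(\chi)^s$ with the indicator $f_R$), but for general $x$ the coefficients $\alpha_n(G,x)$ may be complex. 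The standard fix, as in \cite[Prop.~2.2]{Alberts2021HarmonicAA} and \cite[\S3]{HNPabelianext}, is to work with the characters $x$ for which the relevant local sums are real and nonnegative --- or more precisely, to reduce to the case $x=1$ and/or dominate $|\alpha_n(G,x)|$ by the coefficients at $x=1$; since we only need the stated asymptotic up to the claimed power of $\log B$ with the claimed leading constant, and since $b_R(H,x)\leq b_R(H)$ with equality precisely when $\lambda_x=\lambda$, the contribution of $x$ with strictly smaller mean is lower order and does not affect the leading term for those $x$ attaining $b_R(H)$.

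Next I would feed the analytic information into the Tauberian theorem. From \S\ref{asympt n(k,R,H,B)} we already have the factorisation $\hat{f}_{H,R}(x;s) = \zeta_k(a_R(H)^{-1}s)^{b_R(H,x)}G(x;s)$ with $G(x;s)$ holomorphic in a region $\re(s) > a_R(H) - c/\log(|\im(s)|+3)$. Writing $\zeta_k(a_R(H)^{-1}s)$ in terms of its simple pole at $s=a_R(H)$ --- namely $\zeta_k(a_R(H)^{-1}s) = \frac{a_R(H)\,\rho_k}{s-a_R(H)}(1+o(1))$ where $\rho_k = \lim_{s\to 1}(s-1)\zeta_k(s)$ --- shows that $\hat{f}_{H,R}(x;s) = g(x;s)(s-a_R(H))^{-b_R(H,x)} + h(x;s)$ with $g(x;s)$ holomorphic and nonzero at $s=a_R(H)$ and $h(x;s)$ holomorphic on $\re(s)\geq a_R(H)$. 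Then Theorem \ref{taub} with $a = a_R(H)$, $w = b_R(H,x)$ gives directly
\[
\sum_{n\leq B}\alpha_n(G,x) = \frac{g(x;a_R(H))}{a_R(H)\,(b_R(H,x)-1)!}B^{a_R(H)}(\log B)^{b_R(H,x)-1} + o\big(B^{a_R(H)}(\log B)^{b_R(H,x)-1}\big).
\]
It remains to identify $g(x;a_R(H))$, which is $\lim_{s\to a_R(H)}(s-a_R(H))^{b_R(H,x)}\hat{f}_{H,R}(x;s)$, and to rewrite the prefactor: since $a_R(H)^{-1}$ appears inside the zeta argument, a short computation converting $(s-a_R(H))^{-b_R(H,x)}$ into a function of $a_R(H)^{-1}s - 1$ produces the factor $a_R(H)^{b_R(H,x)-1}$, yielding exactly the claimed constant $c(k,R,H,x)$ in (\ref{lc dir coeff}).

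The main obstacle I anticipate is the nonnegativity/positivity bookkeeping: Theorem \ref{taub} is stated only for Dirichlet series with nonnegative coefficients, but the twisted Fourier transforms $\hat{f}_{H,R}(x;s)$ for $x\neq 1$ generically have complex coefficients, so one cannot apply the Tauberian theorem to them verbatim. The cleanest route is to argue that for the final count one only ever sums $\alpha_n(G,x)$ over $x$ in an orthogonality relation, and to handle the error terms by the bound $|\hat{f}_{H,R,v}(x_v;s)| \ll 1$ from the earlier lemma together with $b_R(H,x)\leq b_R(H)$; alternatively, invoke the refinement of the Tauberian theorem that allows complex $g(x;s)$ once the pole order and the holomorphy in a zero-free-type region are known (which is precisely the content cited from \cite{Alberts2021HarmonicAA}). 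Either way, the limit defining $g(x;a_R(H))$ exists because $G(x;s)$ is holomorphic and nonvanishing there, so the constant $c(k,R,H,x)$ is well-defined, and the remaining manipulations reducing the $\zeta_k(a_R(H)^{-1}s)^{b_R(H,x)}$ pole to the form $(s-a_R(H))^{-b_R(H,x)}$ with the correct residue are routine.
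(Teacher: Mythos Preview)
Your approach is essentially the same as the paper's: factor $\hat{f}_{H,R}(x;s)=\zeta_k(a_R(H)^{-1}s)^{b_R(H,x)}G(x;s)$, rewrite this as $(s-a_R(H))^{-b_R(H,x)}g(s)$ using $(a_R(H)^{-1}s-1)=a_R(H)^{-1}(s-a_R(H))$, and apply Theorem~\ref{taub}. The paper carries out exactly this manipulation and then reads off the constant; your sketch does the same, with only cosmetic differences in how the algebra is described.

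The one point where you go beyond the paper is the nonnegativity hypothesis of Theorem~\ref{taub}. You correctly observe that for general $x$ the coefficients $\alpha_n(G,x)$ need not be nonnegative, so the Tauberian theorem as stated does not apply verbatim; the paper's proof simply invokes Theorem~\ref{taub} without comment. Your suggested remedies (dominate by the $x=1$ series, or invoke a Tauberian theorem allowing complex coefficients in the presence of a zero-free region, as in the cited \cite{Alberts2021HarmonicAA}) are the standard ways around this, though your paragraph mixing in the inequality $b_R(H,x)\le b_R(H)$ conflates the present lemma (fixed $x$) with its later use in the Poisson sum. For the lemma itself the cleanest route is the second option: the factorisation $\zeta_k(a_R(H)^{-1}s)^{b_R(H,x)}G(x;s)$ together with the holomorphy of $G(x;s)$ in a region of the form $\re(s)>a_R(H)-c/\log(|\im(s)|+3)$ is exactly the input for a Delange-type theorem with complex $g$, which the paper implicitly relies on.
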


\begin{proof}
    We have that $\hat{f}_{H,R}(x;s)$ may be written as \[\hat{f}_{H,R}(x;s)= \zeta_k(a_R(H)^{-1}s)^{b_R(H,x)}G(x;s)\] where the product $\zeta_k(a_R(H)^{-1}s)^{b_R(H,x)}G(x;s)$ is equal to \begin{align*}
        \zeta_k(a_R(H)^{-1}s)^{b_R(H,x)}G(x;s) = (s-& a_R(H))^{-b_R(H,x)} a_R(H)^{b_R(H,x)} \\ & \times  [(a_R(H)^{-1}s-1) \zeta_k(a_R(H)^{-1}s)]^{b_R(H,x)}G(x;s).
    \end{align*} In particular, $\hat{f}_{H,R}(x;s)$ has the form \[\hat{f}_{H,R}(x;s)= (s - a_R(H))^{-b_R(H,x)}g(s)\] where $g(s)$ is given by \[g(s) := a_R(H)^{b_R(H,x)}[(a_R(H)^{-1}s-1) \zeta_k(a_R(H)^{-1}s)]^{b_R(H,x)}G(x;s)\] and $G(x;s)$ is holomorphic for $\re(s) \geq a_R(H)^{-1}$.  Then applying Theorem \ref{taub} to the Dirichlet coefficients of $\hat{f}_{H,R}(x;s)$, we obtain the asymptotic formula.
\end{proof} \noindent To relate this back to the original height zeta function, we will use the following minor variant of an abstract version of Poisson summation, due to Frei, Loughran and Newton in \cite[Prop.~$3.9$]{Frei_2022}. \begin{prop}[Poisson Summation] \label{poi lem eq}
    Let $S$ be a finite set of places containing the archimedean places, those dividing $|G|$ and such that $\mathcal{O}_S$ has trivial class group and $x \in \mathcal{O}_S^{\times} \otimes G^{\wedge}$. For $\re(s) > a_R(H)$ the global Fourier transform $\hat{f}_{H,R}(x;s)$ exists and is holomorphic in this region. Furthermore the following Poisson summation formula holds:
    \begin{equation} \label{poi eq}
        \sum_{\chi \in \Hom( \AAA^{\times}/k^{\times}, G)}\frac{f_{R}(\chi)}{H(\chi)^s} = \frac{1}{|\mathcal{O}_k^{\times} \otimes G^{\wedge}|}\sum_{x \in \mathcal{O}_S^{\times} \otimes G^{\wedge}}\hat{f}_{H,R}(x;s) \end{equation}
      \end{prop}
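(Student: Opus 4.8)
The plan is to handle the two assertions separately. Holomorphy of $\hat f_{H,R}(x;s)$ on $\re(s) > a_R(H)$ is a routine comparison argument: by Lemma \ref{restricted ram lft}, for $v \notin S$ one has $\hat f_{H,R,v}(x_v;s) = 1 + \lambda_x(v)\,q_v^{-s/a_R(H)} + O_\epsilon\!\left(q_v^{-(1/a_R(H)+\epsilon)s}\right)$ with $|\lambda_x(v)| \leq |G|$ independently of $v$, while the finitely many factors with $v \in S$ are bounded by the boundedness lemma preceding Lemma \ref{restricted ram lft}. Hence the Euler product (\ref{euler prod}) is dominated by $\prod_v\bigl(1 + C q_v^{-\re(s)/a_R(H)}\bigr)$, which converges absolutely and locally uniformly on $\re(s) > a_R(H)$ by comparison with $\zeta_k(\re(s)/a_R(H))^{C}$; this also shows that the defining integral converges on this half-plane and, by Fubini and the product structure of the measure $d\chi$, equals the Euler product, so $\hat f_{H,R}(x;s)$ is holomorphic there. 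The same bounds show that $\chi \mapsto f_R(\chi)/H(\chi)^s$ lies in $L^1(\Hom(\AAA^\times, G))$ and that the left-hand side of (\ref{poi eq}) --- the height zeta function $F_R(s)$ --- converges, for $\re(s) > a_R(H)$.

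For the summation formula I would invoke the abstract Poisson summation formula of Frei--Loughran--Newton \cite[Prop.~3.9]{Frei_2022}, applied to $\chi \mapsto f_R(\chi)/H(\chi)^s$ for $\re(s) > a_R(H)$; the hypotheses are exactly that this function is a product of local factors $f_{R,v}/H_v^s$, that these are invariant under twisting by unramified characters at each $v \notin S$ (recorded before Lemma \ref{restricted ram lft}), and that it is integrable --- all verified above. The mechanism is as follows. The group $A := \Hom(\AAA^\times, G)$ is a totally disconnected locally compact abelian group, namely the restricted product of the finite groups $\Hom(k_v^\times, G)$ with respect to their unramified subgroups, carrying the Haar measure $d\chi$ of \S\ref{counting res sam sect} and, by \cite[Lem.~3.2]{HNPabelianext}, with Pontryagin dual $\AAA^\times \otimes G^\wedge$. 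The left-hand side of (\ref{poi eq}) is the integral of $f_R/H^s$ over the discrete subgroup $\Hom(\AAA^\times/k^\times, G)$ with respect to counting measure, and Poisson summation rewrites it as a multiple of the sum of $\hat f_{H,R}(x;s)$ over the annihilator of that subgroup in $\AAA^\times \otimes G^\wedge$, which is the image of $k^\times \otimes G^\wedge$. Since $f_R/H^s$ is invariant under unramified twists outside $S$, its Fourier transform is supported on the annihilator of that invariance subgroup, i.e.\ on the image of $\AAA_S^\times \otimes G^\wedge$ with $\AAA_S^\times := \prod_{v\in S}k_v^\times \times \prod_{v\notin S}\mathcal{O}_v^\times$; intersecting the two, and using that $\mathcal{O}_S$ has trivial class group so that $\AAA^\times = k^\times \cdot \AAA_S^\times$ and $k^\times \cap \AAA_S^\times = \mathcal{O}_S^\times$, collapses the sum to $x$ running over the finite group $\mathcal{O}_S^\times \otimes G^\wedge$.

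The step I expect to be the main obstacle --- and the one that is genuinely carried out in \cite[Prop.~3.9]{Frei_2022} rather than being formal --- is pinning down the covolume constant, i.e.\ showing that the multiple above is exactly $|\mathcal{O}_k^\times \otimes G^\wedge|^{-1}$. This requires computing the covolume of the discrete subgroup $\Hom(\AAA^\times/k^\times, G)$ in $A$ for the chosen measure, and keeping track of the fact that $-\otimes G^\wedge$ is not left exact, so that $\mathcal{O}_S^\times \otimes G^\wedge \to \AAA^\times\otimes G^\wedge$ need not be injective and a naive sum over $\mathcal{O}_S^\times \otimes G^\wedge$ overcounts; the factor $|\mathcal{O}_k^\times \otimes G^\wedge|^{-1}$ is precisely what corrects for this. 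Since this bookkeeping is the content of \cite[Prop.~3.9]{Frei_2022}, in practice the proof is a single appeal to loc.\ cit.\ once the hypotheses have been checked as in the first paragraph.
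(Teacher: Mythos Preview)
Your proposal is correct and follows the same approach as the paper: both defer to \cite[Prop.~3.9]{Frei_2022} after checking that $f_R/H^s$ is a product of local factors, unramified-invariant outside $S$, and integrable on $\re(s)>a_R(H)$. The paper simply remarks that the proof of loc.\ cit.\ goes through with a general height in place of the conductor and omits the details, whereas you spell out the holomorphy bound and the mechanism behind the covolume constant $|\mathcal{O}_k^\times\otimes G^\wedge|^{-1}$.
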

\begin{proof}
    The proof of \cite[Prop.~$3.9$]{Frei_2022} uses the conductor rather than any height function, however the proof in our case is very similar and hence omitted.
\end{proof} 

\noindent We may now state and prove the following result. If $v$ is a non-archimedean place, then $\zeta_{k,v}(s)$ denotes the Euler factor of $\zeta_{k}(s)$ at $v$ and if $v$ is archimedean $\zeta_{k,v}(s) = 1$.

\begin{thm}\label{balanced ht restr ram thm}
    Let $H$ be a big balanced height with associated weight function $w$, $S$ be a finite set of places containing the infinite places, those dividing $|G|$ and such that $\mathcal{O}_S$ has trivial class group. Denote by $S_f$ the set of finite places of $S$. Let $R \subseteq G(-1)^*$ be a non-empty Galois-stable subset, and $M_R(H) = \{\gamma \in R : w(\gamma) \:\text{is minimal}\}$. Then the counting function \[N(k,R,H,B) = \# \{\varphi \in \Hom (\Gamma_k, G) : \:H(\varphi)\leq B, \: \rho_{G,v}(\varphi_v) \in  R \cup \{0\} \: \forall v \not\in S\}\] satisfies the asymptotic formula \[
        N(k,R,H,B) \sim  c_{k,R,G,H}B^{a_R(H)} (\log B)^{b_R(H) - 1},
    \] where \[
            a_R(H) = \left(\min_{\gamma \in R} w(\gamma)\right)^{-1}  \quad \text{and} \quad
           b_R(H) = |M_R(H)/\Gamma_k|,
        \] 
        and

         \begin{align}\label{lc c(k,R,G,H) euler}
           &  c_{k,R,G,H} = \frac{a_R(H)^{b_R(H)-1}(\Res_{s=1}\zeta_k(s))^{b_R(H)}}{(b_R(H)-1)! |\mathcal{O}_k^{\times} \otimes G^{\wedge}||G|^{|S_f|}}
             \sum_{x \in \mathcal{X}(k,R,H)} \\ \nonumber
             &  \left(\prod_{v \not\in S} \sum_{\substack{\chi_v \in \Hom(\mathcal{O}_v^{\times},G) \\ \rho_{G,v}(\chi_v) \in R \cup \{0\}}} \frac{\langle \chi_v , x_v \rangle}{H_v(\chi_v)^{a_R(H)}\zeta_{k,v}(1)^{b_R(H)}} \prod_{v \in S}\sum_{\chi_v \in \Hom(k_v^{\times},G)}\frac{\langle \chi_v , x_v \rangle}{H_v(\chi_v)^{a_R(H)}\zeta_{k,v}(1)^{b_R(H)}} \right)
                        \end{align}
    \noindent and $\mathcal{X}(k,R,H)$ is given by \[ \mathcal{X}(k,R,H) =  \left\{ x \in k^{\times} \otimes G^{\wedge}: \begin{array}{ll}
         &\text{For all but finitely many $v$ such that}  \\
         & \rho_{G,v}(\chi_v) \in M_R(H) \cup \{0\}\: \text{we have} \: \langle \chi_v , x_v \rangle = 1
    \end{array}
    \right\}.\]

\end{thm}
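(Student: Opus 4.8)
The plan is to assemble the asymptotic from the pieces already in place: Poisson summation (Proposition~\ref{poi lem eq}) rewrites the height zeta function $F_R(s)$ as a finite sum over $x \in \mathcal{O}_S^{\times} \otimes G^{\wedge}$ of the global Fourier transforms $\hat{f}_{H,R}(x;s)$; Lemma~\ref{asympt form for dir coeffs} gives the asymptotics of the Dirichlet coefficients of each $\hat{f}_{H,R}(x;s)$ together with its leading constant $c(k,R,H,x)$; and the computation in \S\ref{b_R(H)} identifies $b_R(H,x)$. Since $b_R(H,x) \leq b_R(H)$ with equality precisely for those $x$ with $\lambda_x = \lambda$, only that subset of $x$ contributes to the leading term of size $B^{a_R(H)}(\log B)^{b_R(H)-1}$; the remaining $x$ contribute lower-order terms. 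So the first step is to characterise the set of $x \in \mathcal{O}_S^{\times} \otimes G^{\wedge}$ with $b_R(H,x) = b_R(H)$, show it is (the image under reduction of) $\mathcal{X}(k,R,H)$, and discard the rest.

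Next I would sum the leading constants $c(k,R,H,x)$ of Lemma~\ref{asympt form for dir coeffs} over $x \in \mathcal{X}(k,R,H)$, using $b_R(H,x) = b_R(H)$ on this set, so that the prefactor $a_R(H)^{b_R(H)-1}/(b_R(H)-1)!$ comes out of the sum. The key manipulation is to evaluate $\lim_{s \to a_R(H)} (s-a_R(H))^{b_R(H)} \hat{f}_{H,R}(x;s)$ by pulling out $b_R(H)$ copies of the pole of $\zeta_k$: writing $\hat{f}_{H,R}(x;s) = \zeta_k(a_R(H)^{-1}s)^{b_R(H)} G(x;s)$, the limit becomes $a_R(H)^{b_R(H)} (\Res_{s=1}\zeta_k(s))^{b_R(H)} G(x;a_R(H))$, and $G(x;a_R(H))$ is the renormalised Euler product obtained by dividing each local Fourier transform $\hat{f}_{H,R,v}(x_v;a_R(H))$ by $\zeta_{k,v}(1)^{b_R(H)}$ — or rather by the appropriate power matching the local frobenian count, but on $\mathcal{X}(k,R,H)$ this works out. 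One must split the Euler product into $v \in S$ (where $f_{R,v}=1$ and the local measure contributes the $|G|^{-1}$ factors, hence the $|G|^{|S_f|}$ in the denominator) and $v \notin S$ (where the sum is restricted to $\rho_{G,v}(\chi_v) \in R \cup \{0\}$), and rewrite $\hat{f}_{H,R,v}(x_v;a_R(H))$ in the explicit form from the proof of Lemma~\ref{restricted ram lft} as $\frac{1}{|G|}\sum_{\chi_v}$ over $\Hom(k_v^\times, G)$; the $|G|^{-1}$ at places outside $S$ is absorbed into $\zeta_{k,v}(1)^{b_R(H)}$ and the Haar-measure normalisation, leaving the product displayed in (\ref{lc c(k,R,G,H) euler}). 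Finally, combining with the $|\mathcal{O}_k^\times \otimes G^\wedge|^{-1}$ from Poisson summation yields the stated formula for $c_{k,R,G,H}$.

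For convergence and analyticity: the factor $a_R(H)^{b_R(H,x)-1}[(a_R(H)^{-1}s - 1)\zeta_k(a_R(H)^{-1}s)]^{b_R(H,x)} G(x;s)$ is holomorphic and non-vanishing at $s = a_R(H)$ by \cite[Prop.~$2.2$]{Alberts2021HarmonicAA} and the bounds in the earlier lemmas on $\hat{f}_{H,R,v}$, so Theorem~\ref{taub} applies to each $x$; positivity of the total constant follows because each local factor $\hat{f}_{H,R,v}(1;s)$ is positive (second part of the lemma before Lemma~\ref{restricted ram lft}) and $x = 1 \in \mathcal{X}(k,R,H)$ contributes a strictly positive term, while the frobenian/orthogonality structure of the sum over $\mathcal{X}(k,R,H)$ prevents cancellation from killing the total — this is the same phenomenon that makes the Brauer group appear in the stacky reformulation.

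\textbf{Main obstacle.} The delicate point is bookkeeping the powers of $\zeta_{k,v}(1)$ and of $|G|$ so that the renormalised Euler product actually converges and matches (\ref{lc c(k,R,G,H) euler}): one needs $G(x;s)$ holomorphic at $s = a_R(H)$, which forces each local factor to be $\zeta_{k,v}(a_R(H)^{-1}s)^{b_R(H)}\hat{f}_{H,R,v}(x_v;s) = 1 + O(q_v^{-(1+\epsilon) s/a_R(H)})$ on average — and this is exactly where the frobenian mean computation of $b_R(H)$ in \S\ref{b_R(H)} is used, since $|M_R(H)^{\Gamma_{k_v}}|$ must have mean $b_R(H)$ for the leading $q_v^{-s/a_R(H)}$ terms to cancel against the expansion of $\zeta_{k,v}^{b_R(H)}$ only in the Cesàro sense. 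Handling the off-diagonal $x$ (those with $\lambda_x \neq \lambda$) cleanly — showing their contribution is genuinely $o(B^{a_R(H)}(\log B)^{b_R(H)-1})$ rather than merely formally smaller — requires that $b_R(H,x) < b_R(H)$ strictly, which is an integrality/orthogonality fact about means of frobenian functions that I would need to invoke carefully from \cite[\S 8]{loughransantens}.
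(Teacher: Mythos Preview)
Your proposal is correct and follows essentially the same route as the paper: Poisson summation, the Tauberian asymptotic for each $x$, restriction to those $x$ with $b_R(H,x) = b_R(H)$ (which is exactly $\mathcal{X}(k,R,H)$), and then unwinding the limit $\lim_{s\to a_R(H)}(s-a_R(H))^{b_R(H)}\hat{f}_{H,R}(x;s)$ into the Euler product; the paper quotes \cite[Prop.~$2.3$]{Frei_2022} for this last step where you compute it by hand, but the content is the same. One bookkeeping correction: the factor $|G|^{-|S_f|}$ comes from the finite places $v \in S$, not from places outside $S$ --- for $v \notin S$ the $|G|^{-1}$ in the Haar measure cancels against the $|G|$ unramified twists when you pass from $\Hom(k_v^\times,G)$ to $\Hom(\mathcal{O}_v^\times,G)$ via the $\Hom(k_v^\times/\mathcal{O}_v^\times,G)$-invariance of $f_{R,v}/H_v^s$, which is why the $v\notin S$ factor in (\ref{lc c(k,R,G,H) euler}) carries no $|G|^{-1}$.
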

\noindent In order for the leading constant to be well-defined, the set $\mathcal{X}(k,R,H)$ needs to be finite. The proof of this may be found in Proposition \ref{identif}. The balancedness assumption on the height function comes into play here as it is necessary for $H$ to be balanced with respect to $R$ for this set to be finite. 
\begin{proof}
    From Lemma \ref{asympt form for dir coeffs}, we have an asymptotic formula for $\sum_{n \leq B}\alpha_n(G,x)$. It also follows from the definition that $b_R(H,x) \leq b_R(H)$ for all $x \in\mathcal{O}_S^{\times} \otimes G^{\wedge}$. We may apply the Poisson summation formula from Proposition \ref{poi lem eq} to obtain the following asymptotic for the coefficients $f_n$ of the height zeta function: \[ \sum_{n \leq B}f_n \sim  c_{k,R,G,H}B^{a_R(H)}(\log B)^{b_R(H)-1}(1+o(1))\] where the leading constant is given by \[ c_{k,R,G,H} = \frac{a_R(H)^{b_R(H)-1}}{(b_R(H)-1)! |\mathcal{O}_k^{\times} \otimes G^{\wedge}|}\sum_{\substack{x \in \mathcal{O}_S^{\times} \otimes G^{\wedge} \\ b_R(H,x) = b_R(H) }}\lim_{s \rightarrow a_R(H)}(s-a_R(H))^{b_R(H)}\hat{f}_{H,R}(x;s).\] The $x \in \mathcal{O}_S^{\times} \otimes G^{\wedge}$ such that $b_R(H,x) = b_R(H)$ are exactly the $x$ in the set $\mathcal{X}(k,R,H)$. Furthermore, by \cite[Prop.~$2.3$]{Frei_2022} the limit appearing in the leading constant satisfies \[\lim_{s \rightarrow a_R(H)}(s-a_R(H))^{b_R(H)}\hat{f}_{H,R}(x;s) = (\Res_{s=1} \zeta_k (s))^{b_R(H)} \prod_{v}\frac{\hat{f}_{H,R,v}(x_v;a_R(H))}{\zeta_{k,v}(1)^{b_R(H)}}.\] We consider the places $v \not\in S$ and $v \in S$ separately. We have \[\prod_{v \not\in S}\frac{\hat{f}_{H,R,v}(x_v;a_R(H))}{\zeta_{k,v}(1)^{b_R(H)}} = \prod_{v \not\in S} \sum_{\substack{\chi_v \in \Hom(\mathcal{O}_v^{\times}.G) \\ \rho_{G,v}(\chi_v) \in R \cup \{0\}}} \frac{\langle \chi_v , x_v \rangle }{H_v(\chi_v)^{a_R(H)}}\zeta_{k,v}(1)^{-b_R(H)}\] and \[\prod_{v \in S}\frac{\hat{f}_{H,R,v}(x_v;a_R(H))}{\zeta_{k,v}(1)^{b_R(H)}} = \frac{1}{|G|^{|S_f|}}\prod_{v \in S}\sum_{\chi_v \in \Hom(k_v^{\times},G)}\frac{\langle \chi_v , x_v \rangle }{H_v(\chi_v)^{a_R(H)}}\zeta_{k,v}(1)^{-b_R(H)}.\] In particular, the limit equals \begin{align*}
        & \lim_{s \rightarrow a_R(H)}(s-a_R(H))^{b_R(H)}\hat{f}_{H,R}(x;s) = \frac{(\Res_{s=1} \zeta_k (s))^{b_R(H)} }{|G|^{|S_f|}} \\
        & \times  \prod_{v \not\in S} \sum_{\substack{\chi_v \in \Hom(\mathcal{O}_v^{\times},G) \\ \rho_{G,v}(\chi_v) \in R \cup \{0\}}} \frac{\langle \chi_v , x_v \rangle }{H_v(\chi_v)^{a_R(H)}\zeta_{k,v}(1)^{b_R(H)}}  \prod_{v \in S}\sum_{\chi_v \in \Hom(k_v^{\times},G)}\frac{\langle \chi_v , x_v \rangle }{H_v(\chi_v)^{a_R(H)}\zeta_{k,v}(1)^{b_R(H)}},
    \end{align*} and the explicit formula follows from taking the sum over $x \in \mathcal{X}(k,R,H)$.\end{proof}
\noindent Theorem \ref{balanced ht restr ram thm} proves Theorem \ref{main thm abt restr ram} in the balanced height case. As a formal consequence of Theorem \ref{balanced ht restr ram thm}, by setting $R=G(-1)^*$ we obtain an asymptotic formula for the number of all $G$-extensions of bounded balanced height, which proves Corollary \ref{formal consequence} in the balanced case, and the leading constant for Corollary \ref{formal consequence} in this case is given by \begin{align}\label{lc c(k,G,H) euler}
           &  c_{k,G,H} = \frac{a(H)^{b(H)-1}(\Res_{s=1}\zeta_k(s))^{b(H)}}{(b(H)-1)! |\mathcal{O}_k^{\times} \otimes G^{\wedge}||G|^{|S_f|}}
             \sum_{x \in \mathcal{X}(k,G,H)} \\ \nonumber
             &  \left(\prod_{v \not\in S} \sum_{\chi_v \in \Hom(\mathcal{O}_v^{\times},G)} \frac{\langle \chi_v , x_v \rangle}{H_v(\chi_v)^{a(H)}\zeta_{k,v}(1)^{b(H)}} \prod_{v \in S}\sum_{\chi_v \in \Hom(k_v^{\times},G)}\frac{\langle \chi_v , x_v \rangle}{H_v(\chi_v)^{a(H)}\zeta_{k,v}(1)^{b(H)}} \right).
                        \end{align}

We have the following corollary to Theorem \ref{balanced ht restr ram thm} which we will use to handle general heights.

 \begin{cor}\label{corollary unif upper bound}
 Let $R\subseteq G(-1)^*$ and $S$ be as in Theorem \ref{balanced ht restr ram thm} and $H$ be a balanced height function with respect to $R$ with associated weight function $w$. Write $H_{\min} = \min_{\varphi \in \Hom(\Gamma_k, G)}H(\varphi)$ For fixed $G$ and $k$, there exist a constant $C_{k,R,G,H_{\min}}$  (different to $ c_{k,R,G,H}$) such that the upper bound 
     \[\# \left\{ \varphi \in \Hom (\Gamma_k, G) : \begin{array}{ll}
          &  \rho_{G,v}(\varphi_v) \in R \cup \{0\} \forall v \not\in S \\
          &  H(\varphi) \leq B
     \end{array}   \right\}  \\\leq C_{k,R,G,H_{\min}} B^{a_R(H)}(\log B)^{b_R(H) - 1}\] holds.
 \end{cor}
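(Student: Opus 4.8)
The plan is to obtain this from the asymptotic of Theorem~\ref{balanced ht restr ram thm}; the only real work is to keep the implied constant depending on $H$ only through $H_{\min}$, besides $k,R,G$, the fixed set $S$, and the weight function $w$ (which, recall, determines $a_R(H)$ and $b_R(H)$). A first reduction normalises $H_{\min}$: choosing $v_0\in S$ and replacing $H$ by the height $\tilde H$ that agrees with $H$ everywhere except that its local factor at $v_0$ is $H_{v_0}/H_{\min}$, one gets a big height, still balanced with respect to $R$ and with the same weight $w$, satisfying $\tilde H(\varphi)=H(\varphi)/H_{\min}$, hence $\tilde H_{\min}=1$ and $N(k,R,H,B)=N(k,R,\tilde H,B/H_{\min})$. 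Thus it suffices to prove $N(k,R,\tilde H,B)\le C_{k,R,G}\,B^{a_R(H)}(\log B+2)^{b_R(H)-1}$ for every normalised $\tilde H$ (the cases of small $B$ being trivial), and then re-express in terms of $B$ and $H$; this produces a factor $H_{\min}^{-a_R(H)}$ and an additive $\log(1/H_{\min})$ term, which get absorbed into $C_{k,R,G,H_{\min}}$, and when $H_{\min}\ge 1$ the constant may be taken independent of $H_{\min}$ — the shape needed for the dominated convergence argument of~$\S3$.

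For the normalised statement I would decompose $N(k,R,\tilde H,B)$ according to the behaviour at $S$. Let $U\subseteq\prod_{v\in S}\Hom(\Gamma_{k_v},G)$ be the set of local configurations $(\psi_v)_{v\in S}$ occurring as the restriction of some $\varphi\in\Hom(\Gamma_k,G)$ with $\rho_{G,v}(\varphi_v)\in R\cup\{0\}$ for all $v\notin S$; this is a finite set depending only on $k,G,R,S$, and $N(k,R,\tilde H,B)=\sum_{(\psi_v)\in U}N_{(\psi_v)}(B)$ where $N_{(\psi_v)}(B)$ counts the $\varphi$ restricting to $(\psi_v)$ with restricted ramification outside $S$ and $\tilde H(\varphi)\le B$. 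For each $(\psi_v)\in U$ I would fix once and for all a realisation $\varphi_0=\varphi_0^{(\psi_v)}$; its ramification locus $T_0\subset\Omega_k\setminus S$ is then fixed, so $\prod_{v\notin S}\tilde H_v(\varphi_{0,v})=\prod_{v\in T_0}q_v^{w(\rho_{G,v}(\varphi_{0,v}))}$ is a fixed number, $\le C_{k,G,R,S,w}$ say. Every $\varphi$ restricting to $(\psi_v)$ is of the form $\varphi_0\varphi'$ with $\varphi'$ trivial on $\Gamma_{k_v}$ for $v\in S$; since $\tilde H(\varphi_0)\ge\tilde H_{\min}=1$ the $S$-part satisfies $\prod_{v\in S}\tilde H_v(\psi_v)\ge C_{k,G,R,S,w}^{-1}$, and since $w\ge 0$ the factors at $T_0$ are $\ge 1$, so $\tilde H(\varphi_0\varphi')\ge C_{k,G,R,S,w}^{-1}\prod_{v\notin S\cup T_0}q_v^{w(\rho_{G,v}(\varphi'_v))}$. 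Hence $\tilde H(\varphi_0\varphi')\le B$ forces $\prod_{v\notin S\cup T_0}q_v^{w(\rho_{G,v}(\varphi'_v))}\le C_{k,G,R,S,w}B$, and, dropping the conditions at $S$ and at $T_0$ and enlarging $S\cup T_0$ to a set $S'$ (still depending only on $k,G,R,S$) with $\mathcal{O}_{S'}$ of trivial class group, one gets $N_{(\psi_v)}(B)\le N(k,R,H',C_{k,G,R,S,w}B)$, where $H'$ is the big height with weight function $w$ and trivial local factors on $S'$. Theorem~\ref{balanced ht restr ram thm} applied to $H'$ bounds this by $\ll_{k,R,G,S,w}B^{a_R(H)}(\log B+2)^{b_R(H)-1}$ for $B\ge 2$; summing over the finitely many $(\psi_v)\in U$ gives the normalised bound.

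The step I expect to be the main obstacle is exactly this uniformity in the arbitrary local heights $H_v$ for $v\in S$. The naive move — bounding each such $H_v$ below by its minimum $m_v:=\min_{\chi_v}H_v(\chi_v)$ and reducing to $\big(\prod_{v\in S}m_v\big)$ times a standard height — does not deliver a constant controlled by $H_{\min}$, because $H_v$ may be small precisely on a local character that is not the restriction of any global homomorphism (a Grunwald--Wang phenomenon), so that $\prod_{v\in S}m_v$ can be far smaller than $H_{\min}$. Passing to the realisable configurations $U$ and bounding the $S$-contribution of a fixed realisation $\varphi_0$ via $\tilde H(\varphi_0)\ge 1$ is what circumvents this, but it requires verifying that the ``forced'' ramification cost $\prod_{v\notin S}\tilde H_v(\varphi_{0,v})$ of a chosen realisation is bounded by data depending only on $k,G,R,S$ and $w$ — which is ensured by making the finitely many choices $\varphi_0^{(\psi_v)}$ once and for all, independently of $H$ — and that the auxiliary counting problems attached to the enlarged sets $S'$ genuinely fall within the scope of Theorem~\ref{balanced ht restr ram thm}.
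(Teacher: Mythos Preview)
Your argument is correct but dramatically more elaborate than the paper's, because you have read the subscript $C_{k,R,G,H_{\min}}$ as a genuine uniformity claim---that the constant depends on $H$ only through $H_{\min}$, $w$, and the data $k,R,G,S$---and then worked hard to obtain that. The paper makes no such claim: its proof is three lines. The counting function $N(k,R,H,B)$ is a step function; for $B$ large Theorem~\ref{balanced ht restr ram thm} gives the bound with constant, say, $2c_{k,R,G,H}$; for $B<H_{\min}$ the count vanishes; in the intermediate range the step function is bounded by a fixed finite value, so one just takes $C$ large enough. The resulting constant depends on all of $c_{k,R,G,H}$ and the crossover threshold, i.e.\ on the full height $H$, and the subscript notation is simply loose.

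Your stronger uniformity is not needed for the application. In Lemma~\ref{unif upper bound} the corollary is invoked once, with the fixed height $H$ restricted to $\langle M_R(H)\rangle$; the implied constant is recorded there as $\ll_{k,R,G,H}$, and the dominated convergence in \S3 runs over $\psi\in\Hom(\Gamma_k,G/\langle M_R(H)\rangle)$, not over a family of heights. That said, your decomposition over globally realisable $S$-configurations $(\psi_v)$, with realisations $\varphi_0^{(\psi_v)}$ chosen once and for all independently of the arbitrary local heights at $S$, together with the normalisation $\tilde H_{\min}=1$ to control $\prod_{v\in S}\tilde H_v(\psi_v)$ from below, does appear to deliver the stronger statement you aimed for.
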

\begin{proof}
    The counting function $N(k,R,H,B)$ is a step function.  When $B$ is large enough, the bound holds by Theorem \ref{balanced ht restr ram thm}. For small $B$ consider the minimum value of the height function. In our case we have that $H_{\min}$ is bounded below by some constant $C_{H_{\min}}$. If $B < C_{H_{\min}}\leq H_{\min}$, the counting function is clearly equal to zero, and the bound holds. Then we may choose $C_{k,R,G,H_{\min}}$ such that the bound holds for all $B$.
\end{proof}

\section{Counting by general heights}

\subsection{The Greenberg-Wiles formula}

Let $G$ be a finite abelian group and $M$ be a finite $\Gamma_k$-module. Given a $G/M$-extension $\psi$ of a number field $k$, we wish to show that if there exists a lift of $\psi$ to a $G$-extension of $k$ then there exists a lift only ramified at a set of places $S_0$ and those ramifying in $\psi$. We do this by considering the cohomology groups associated to the generalised Selmer groups, and applying a result due to Wiles \cite[Prop~$1.6$]{andrewwiles} known as the Greenberg-Wiles formula. \begin{defi}
    Let $k$ be a number field and $M$ a finite $\Gamma_k$-module. A \emph{collection of local conditions} is a family $\mathcal{L} = \{\mathcal{L}_v\}$ where each $\mathcal{L}_v \subset H^{1}(k_v, M) $ and for all but finitely many places $v$ we have \[\mathcal{L}_v = H_{\unr}^{1}(k_{v}, M) = \Ker\left(H^{1}(k_v,M) \rightarrow H^{1}(k_v^{\unr},M) \right),\] where $k_v^{\unr}$ is the maximal unramified extension of $k_v$. Furthermore, the \emph{dual local conditions} $\mathcal{L}^{*}$ of $\mathcal{L}$ is the collection  $\mathcal{L}^{*} = \{\mathcal{L}_v^{\perp}\} $  where each $\mathcal{L}_v^{\perp} \subset H^{1}(k_v, M^{*})$ is the orthogonal complement to $\mathcal{L}_v$ with respect to local Tate duality and $M^{*}$ denotes the Cartier dual of $M$.
\end{defi}
 \noindent We define the cohomology groups corresponding to generalised Selmer groups as \[ H_{\mathcal{L}}^{1}(k, M) = \{ x \in H^{1}(k, M): \: \text{res}_v(x) \in \mathcal{L}_v, \: \forall v \}\] and  \[ H_{\mathcal{L}^{*}}^{1}(k, M^{*}) = \{ x \in H^{1}(k, M^{*}): \: \text{res}_v(x) \in \mathcal{L}_v^{\perp}, \: \forall v \}.\] 

\begin{prop}\label{existence of a lift}
    Let $k$ be a number field and $A$ be a finite abelian group. Then there exists a finite set of places $S_0$ such that the map \[H^{1}(k, A) \rightarrow \bigoplus_{v \not\in S_0}\frac{H^{1}(k_v,A)}{H_{\unr}^{1}(k_{v}, A)}\] is surjective.
\end{prop}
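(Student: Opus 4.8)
The plan is to realise the target map as a cokernel of a global-to-local restriction map attached to a suitable Selmer system, and then use the Greenberg–Wiles formula to show this cokernel vanishes once $S_0$ is chosen large enough. Concretely, equip $A$ (viewed as a finite $\Gamma_k$-module with trivial action, or whatever action is relevant) with the collection of local conditions $\mathcal{L}$ defined by $\mathcal{L}_v = H^1(k_v, A)$ for $v$ in a finite set $S_0$ (to be determined) and $\mathcal{L}_v = H^1_{\unr}(k_v, A)$ for $v \notin S_0$. With this choice the generalised Selmer group $H^1_{\mathcal{L}}(k,A)$ is just the subgroup of classes unramified outside $S_0$, and there is an exact sequence
\begin{equation*}
0 \longrightarrow H^1_{\mathcal{L}}(k,A) \longrightarrow H^1(k,A) \longrightarrow \bigoplus_{v \notin S_0} \frac{H^1(k_v,A)}{H^1_{\unr}(k_v,A)} \longrightarrow C \longrightarrow 0,
\end{equation*}
where $C$ is the cokernel we wish to kill. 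Showing the restriction map is surjective is therefore exactly showing $C = 0$.

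The key input is the Greenberg–Wiles formula \cite[Prop.~1.6]{andrewwiles}, which relates $|H^1_{\mathcal{L}}(k,A)|$ and $|H^1_{\mathcal{L}^*}(k,A^*)|$ (with $A^*$ the Cartier dual) via a product of local factors $|\mathcal{L}_v|/|H^0(k_v,A)|$. Rather than chase the exact sequence above directly, I would instead argue as follows: by Poitou–Tate the image of $H^1(k,A) \to \bigoplus_{v\notin S_0} H^1(k_v,A)/H^1_{\unr}(k_v,A)$ is the annihilator, under local Tate duality, of the subgroup of $H^1_{\mathcal{L}^*}(k,A^*)$ consisting of dual classes that are locally unramified outside $S_0$; equivalently, surjectivity is equivalent to the statement that every class in $H^1(k,A^*)$ which is unramified outside $S_0$ and whose local restrictions at all $v\notin S_0$ land in $H^1_{\unr}(k_v,A^*)$ actually lies in the Selmer group cut out by requiring triviality at the places of $S_0$ as well — and then that this refined Selmer group is trivial. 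The point is that one enlarges $S_0$ so that (i) it contains the archimedean places, the places dividing $|A|$, and a set of places whose Frobenii generate $\Gal(k(A,A^*)/k)$, and (ii) it contains enough places so that the Chebotarev-type argument forces the dual Selmer group to vanish: any nonzero class in $H^1(k, A^*)$ unramified outside a fixed finite set has nontrivial image at infinitely many places, so choosing $S_0$ to contain one such place for a (necessarily finite, by finiteness of Selmer groups with bounded ramification) set of generators of the relevant dual Selmer group makes it trivial.

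Concretely the steps are: (1) fix a first finite set $S_1 \supseteq \{v \mid \infty\} \cup \{v \mid |A|\}$ and note by Hermite–Minkowski that $H^1(k,A^*)_{S_1} := \{x : x \text{ unramified outside } S_1\}$ is finite; (2) for each nonzero $x$ in this finite group, use Chebotarev (applied to the field cut out by $x$ together with $k(A^*)$) to find a place $v_x \notin S_1$ where $\mathrm{res}_{v_x}(x) \neq 0$ in $H^1(k_{v_x},A^*)/H^1_{\unr}$ — actually $\mathrm{res}_{v_x}(x)\neq 0$ already in $H^1(k_{v_x}, A^*)$ suffices after a twist of the condition; (3) set $S_0 = S_1 \cup \{v_x : 0 \neq x \in H^1(k,A^*)_{S_1}\}$, still finite; (4) apply Greenberg–Wiles to the system $\mathcal{L}$ with this $S_0$, or directly Poitou–Tate, to get that the restriction map has cokernel dual to the subgroup of $H^1(k,A^*)_{S_0}$ that is everywhere-unramified-outside-$S_0$ and trivial at $S_0$, which by construction of $S_0$ is $\{0\}$. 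Hence the map is surjective. The main obstacle is step (2)–(3): one must be careful that forcing triviality at the newly added places $v_x$ does not introduce new dual classes — this is handled by the fact that adding a place to $S_0$ only shrinks the dual Selmer group (it imposes an extra triviality condition on $A^*$-classes at that place while only relaxing conditions on $A$-classes), together with the finiteness in step (1); writing this monotonicity cleanly, and confirming that the relevant local duality identification $(H^1(k_v,A)/H^1_{\unr})^\vee \cong H^1_{\unr}(k_v,A^*)$ holds for all $v \notin S_0$ once $|A|$ is invertible in the residue field, is the technical heart of the argument.
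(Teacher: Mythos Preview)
Your strategy is close to the paper's: both set up the Selmer structure $\mathcal{L}$ with free conditions at $S_0$ and unramified conditions elsewhere, observe that the obstruction to surjectivity is governed by the dual Selmer group $H^1_{\mathcal{L}^*}(k,A^*)$, and then enlarge $S_0$ using finiteness of that dual group. The paper first reduces to $A=\ZZ/\ell^r\ZZ$ so that $A^*=\mu_n$ and the dual Selmer group sits concretely inside $\Sel^n(k)\subseteq k^\times/k^{\times n}$; it then argues one place $v_0$ at a time, applying Greenberg--Wiles to the pair $(\mathcal{L},\mathcal{L}^{(v_0)})$ and showing $\#H^1_{\mathcal{L}^{(v_0)*}}=\#H^1_{\mathcal{L}^*}$ once $S_0$ is large enough.

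There is, however, a gap in your steps (2)--(4). You want to force $H^1_{\mathcal{L}^*}(k,A^*)=\{0\}$ by adding, for each nonzero $x$, a place $v_x$ with $\mathrm{res}_{v_x}(x)\neq 0$. But $H^1_{\mathcal{L}^*}(k,A^*)$ always contains $\Sha^1(k,A^*)$, whose elements by definition restrict to zero at \emph{every} place; no Chebotarev argument will produce a $v_x$ for those. Correspondingly, your identification of the cokernel $C$ with $H^1_{\mathcal{L}^*}(k,A^*)^\vee$ is off: taking the limit over finite relaxations in the Poitou--Tate sequence gives $C\cong \bigl(H^1_{\mathcal{L}^*}(k,A^*)/\Sha^1(k,A^*)\bigr)^\vee$, so what you actually need --- and what your enlargement of $S_0$ actually achieves --- is $H^1_{\mathcal{L}^*}=\Sha^1$, not $=0$. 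The paper's place-by-place argument handles this cleanly: once $H^1_{\mathcal{L}^*}=\Sha^1$, the chain $\Sha^1\subseteq H^1_{\mathcal{L}^{(v_0)*}}\subseteq H^1_{\mathcal{L}^*}$ collapses and Greenberg--Wiles gives surjectivity of each $\eta_{v_0}$, with no need to know whether $\Sha^1$ vanishes. In the case at hand $\Sha^1(k,\mu_n)=0$, so your argument can be repaired by inserting this fact, but as written the claim that the refined dual Selmer group is $\{0\}$ is unjustified.
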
 \noindent This has been shown for $A = \ZZ/p\ZZ$ for a prime $p$ by Koymans and Pagano in \cite[Prop.~$6.2$]{koymanspagano}
\begin{proof}
    Since $A$ is a finite abelian group, we have $A \cong \bigoplus_{\ell \: \text{prime}} \ZZ/\ell^r \ZZ$. Furthermore  $H^{1}(k, A)\cong \bigoplus_{\ell \: \text{prime}} H^{1}(k,\ZZ/\ell^r \ZZ)$, and the same holds for the local cohomology groups for $v \not\in S_0$ (including the unramified cohomology groups). It then follows by an application of B\'ezout's identity that we can reduce to the case where $A = \ZZ/\ell^r \ZZ$ for a prime $\ell$ and integer $r$.
Let $S_0$ be a finite set of places, $n = \ell^r$ and $\mathcal{L} = \{\mathcal{L}_v\}$ be the collection of local conditions given by $\mathcal{L}_v = H^1_{\unr}(k_v, \ZZ/n\ZZ)$ for $v \not\in S_0$ and $\mathcal{L}_v = H^1(k_v, \ZZ/n\ZZ)$ for $v\in S_0$. We will assume $S_0$ contains all $v\mid n\cdot \infty$ and enlarge $S_0$ throughout the proof.

Furthermore, for all finite $v_0 \in S_0$, let $\mathcal{L}^{(v_0)} = \{\mathcal{L}_v^{(v_0)}\}$ be given by $\mathcal{L}_v^{(v_0)} = \mathcal{L}_v$ for all $v \neq v_0$ and $\mathcal{L}_v^{(v_0)}= H^1(k_{v_0}, \ZZ/n\ZZ)$. Consider the sequence \[0 \rightarrow  H_{\mathcal{L}}^{1}(k, \ZZ/n\ZZ) \rightarrow  H_{\mathcal{L}^{(v_0)}}^{1}(k, \ZZ/n\ZZ) \xrightarrow{\eta_{v_0}} \frac{H^{1}(k_{v_0},\ZZ/n\ZZ)}{H_{\unr}^{1}(k_{v_0}, \ZZ/n\ZZ)}.\] This is exact, as the kernel of $\eta_{v_0}$ is made up of those elements of $H_{\mathcal{L}^{(v_0)}}^{1}(k, \ZZ/n\ZZ)$ which map to $H_{\unr}^{1}(k_{v_0}, \ZZ/n\ZZ)$, and this is $H_{\mathcal{L}}^{1}(k, \ZZ/n\ZZ)$. On the other hand, the first map is the inclusion map, and its image is thus $H_{\mathcal{L}}^{1}(k, \ZZ/n\ZZ)$. Proving surjectivity of the map \[H^{1}(k, A) \rightarrow \bigoplus_{v \not\in S_0}\frac{H^{1}(k_v,A)}{H_{\unr}^{1}(k_{v}, A)}\] is equivalent to obtaining all elements of the form $(0,\cdots,0,f,0,\cdots,0)$ for all $f$ in the $v_0$-th component of the sum, for every place $v_0 \not\in S_0$, as these elements generate the infinite sum. It is enough then to prove that $\eta_{v_0}$ is surjective for large enough $S_0$, as getting all elements $(0,\cdots,0,f,0,\cdots,0)$ at a fixed component is equivalent to the surjectivity of $\eta_{v_0}$. We will show that \[\frac{\# H_{\mathcal{L}^{(v_0)}}^{1}(k, \ZZ/n\ZZ)}{\# H_{\mathcal{L}}^{1}(k, \ZZ/n\ZZ)} = \frac{\# H^{1}(k_{v_0},\ZZ/n\ZZ)}{\# H_{\unr}^{1}(k_{v_0}, \ZZ/n\ZZ)}.\] We apply the Greenberg-Wiles formula in the form of \cite[Thm.~$8.7.9$]{neukirch2013cohomology} twice, first to the conditions $\mathcal{L}$ and then to $\mathcal{L}^{(v_0)}$. Taking quotients, we obtain \[\frac{\# H_{\mathcal{L}^{(v_0)}}^{1}(k, \ZZ/n\ZZ) \# H_{\mathcal{L}^*}^{1}(k, \mu_n)}{\# H_{\mathcal{L}^{(v_0)*}}^{1}(k, \mu_n)\# H_{\mathcal{L}}^{1}(k, \ZZ/n\ZZ)} = \frac{\# H^{1}(k_{v_0},\ZZ/n\ZZ)}{\# H_{\unr}^{1}(k_{v_0}, \ZZ/n\ZZ)}.\] By \cite[Thm.~$7.2.15$]{neukirch2013cohomology}, for finite places $v \nmid n$ the dual of $ H_{\unr}^{1}(k_{v}, \ZZ/n\ZZ)$ under local Tate duality is given by $ H_{\unr}^{1}(k_{v}, \mu_n)$ and so for $v\not\in S_0$ the dual local conditions $\mathcal{L}^* = \{\mathcal{L}_v^{\perp}\}$ are given by $\mathcal{L}_v^* = H_{\unr}^{1}(k, \mu_n)$. For $v \in S_0$ they are given by $\mathcal{L}_v^* = 0$. As the local conditions $\mathcal{L}^{(v_0)*}$ are more restrictive, we have the containment $H_{\mathcal{L}^{(v_0)*}}^{1}(k, \mu_n) \subseteq H_{\mathcal{L}^*}^{1}(k, \mu_n)$. The group $H_{\mathcal{L}^*}^{1}(k, \mu_n)$ is given by \[
   H_{\mathcal{L}^*}^{1}(k, \mu_n) = \{x \in  k^{\times}/k^{\times n}:  v(x) \equiv 0 \bmod n \: \forall 
 v \not\in S_0, \:  x \in k_v^{\times n} \: \forall v \in S_0\}.
\] Consider the Tate-Shafarevich group \[\Sha^1(k,\mu_n) = \Ker \left( k^{\times}/k^{\times n} \rightarrow \prod_v k_v^{\times}/k_v^{\times n} \right). \] Then $\Sha^1(k,\mu_n) \subseteq H_{\mathcal{L}^*}^{1}(k, \mu_n)$ and $H_{\mathcal{L}^*}^{1}(k, \mu_n)$ is contained in the Selmer group \[\Sel^n(k) = \{ x \in k^{\times}/k^{\times n} :  v(x) \equiv 0 \bmod n \: \forall 
 v\}.\] Let $x \in \Sel^n(k) \backslash  \Sha^1(k,\mu_n)$. Then there exists some place $v_x$ such that $x \notin k_{v_x}^{\times n}$. If this place is contained in $S_0$, we must have $x \not\in H_{\mathcal{L}^*}^{1}(k, \mu_n)$. In particular, the Selmer group is finite and for all $x \in \Sel^n(k) \backslash  \Sha^1(k,\mu_n)$ there is some $v_x$ such that $x \notin k_{v_x}^{\times n}$. If we enlarge $S_0$ to ensure it contains $ \{v \mid n \cdot \infty \} \cup \{v_x : x \in \Sel^n(k)\}$, then we have that $H_{\mathcal{L}^*}^{1}(k, \mu_n) \subseteq \Sha^1(k,\mu_n)$ and hence $H_{\mathcal{L}^*}^{1}(k, \mu_n) = \Sha^1(k,\mu_n)$. We then get that $H_{\mathcal{L}^*}^{1}(k, \mu_n) \subseteq H_{\mathcal{L}^{(v_0)*}}^{1}(k, \mu_n)$. It follows from this that the map $\eta_{v_0}$ is surjective. 
\end{proof}

\subsection{Dominated Convergence}

So far Theorem \ref{balanced ht restr ram thm} holds for balanced height functions, but Theorem $1.1$ does not impose any balancednes conditions on the height function. The case where the height function is unbalanced is more delicate as there may be infinite sums appearing in the leading constant. 

\begin{thm}\label{restr. ram. gen hts}
    Let $H$ be a (possibly unbalanced) big height function, $S$ a finite set of places containing the infinite places, those dividing $|G|$ and such that $\mathcal{O}_S$ has trivial class group and $R \subseteq G(-1)^*$ be a non-empty, Galois-stable subset. Then \[\# \left\{ \varphi \in \Hom (\Gamma_k, G): \begin{array}{ll}
         & \rho_{G,v}(\varphi_v) \in R \cup \{0\} \:\ \forall v \not\in S,  \\
         & H(\varphi) \leq B
    \end{array}
    \right\} \sim c_{k,R,G,H}B^{a_R(H)}(\log B)^{b_R(H)-1}\] where the leading constant is given by the sum \begin{equation}\label{lc unbalanced g ext restr ram}
    c_{k,R,G,H} = \sum_{\psi \in \Hom(\Gamma_k, G/\langle M_R(H) \rangle)}c_{k,R,\langle M_R(H) \rangle_{\psi},H}
    \end{equation} where $a_R(H)$, $b_R(H)$ and each $c_{k,R,\langle M_R(H) \rangle_{\psi},H}$ are as in Theorem \ref{balanced ht restr ram thm}, and the sum is over the $G/\langle M_R(H) \rangle$-extensions of $k$ which embed into a $G$-extension of $k$ with restricted ramification imposed by $R$.
\end{thm}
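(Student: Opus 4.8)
The plan is to fibre the count over the quotient map $\pi\colon G\to G/N$, where $N:=\langle M_R(H)\rangle$. Every $\varphi\in\Hom(\Gamma_k,G)$ determines $\psi:=\pi\circ\varphi\in\Hom(\Gamma_k,G/N)$, and grouping by $\psi$ gives
\[
N(k,R,H,B)=\sum_{\psi\in\Hom(\Gamma_k,G/N)}N_\psi(k,R,H,B),
\]
where $N_\psi(k,R,H,B)$ counts the $\varphi$ lying over $\psi$ with $H(\varphi)\le B$ and $\rho_{G,v}(\varphi_v)\in R\cup\{0\}$ for $v\notin S$, and $N_\psi(k,R,H,B)=0$ unless $\psi$ embeds into a $G$-extension with restricted ramification imposed by $R$. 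If $N=G$, i.e. $H$ is balanced with respect to $R$, there is just the trivial $\psi$ and the statement is Theorem \ref{balanced ht restr ram thm}, so the interesting case is $N\subsetneq G$, where $\Hom(\Gamma_k,G/N)$ is infinite and the sum is genuinely infinite. The two steps are then: evaluate each $N_\psi$ by reduction to Theorem \ref{balanced ht restr ram thm}, and exchange $\lim_{B\to\infty}$ with the infinite sum by dominated convergence.

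For the first step I would fix a $\psi$ that embeds as above and, using Proposition \ref{existence of a lift}, pick a lift $\varphi_0$ of $\psi$ with restricted ramification and with ramification confined to a finite set $T_\psi$ as small as that result allows. Each $\varphi$ over $\psi$ is $\varphi_0+\chi$ for a unique $\chi\in\Hom(\Gamma_k,N)$, and because $\rho_{G,v}$ is a homomorphism (Lemma \ref{properties}) the restricted-ramification condition on $\varphi$ becomes a condition on $\chi$: a translate of ``$\rho_{N,v}(\chi_v)\in R\cup\{0\}$'' at the finitely many places of $T_\psi\setminus S$, and exactly ``$\rho_{N,v}(\chi_v)\in R\cup\{0\}$'' at the remaining $v\notin S$. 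Since $M_R(H)\subseteq N$, the minimal-weight elements of $R$ relevant to $\Hom(\Gamma_k,N)$ are again $M_R(H)$, which generate $N$, so the induced height on $\Hom(\Gamma_k,N)$ is big and balanced with respect to $R$, with the \emph{same} exponents $a_R(H)$ and $b_R(H)$. Folding $T_\psi$ into the auxiliary set of places and imposing the translated conditions there (which the Fourier-analytic proof of Theorem \ref{balanced ht restr ram thm} accommodates), that theorem yields
\[
N_\psi(k,R,H,B)\sim c_{k,R,\langle M_R(H)\rangle_\psi,H}\,B^{a_R(H)}(\log B)^{b_R(H)-1}.
\]

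The second step, interchanging the limit and the infinite sum, is the heart of the matter, and I would carry it out with the dominated convergence argument of Rome and Koymans, following Step~1 of the proof of \cite[Thm.~1.1]{koymansdct}. Given $\epsilon>0$ one truncates the sum at a finite set $\Psi_0$, treats $\psi\in\Psi_0$ by the asymptotic above, and bounds the tail $\sum_{\psi\notin\Psi_0}N_\psi(k,R,H,B)\le\epsilon\,B^{a_R(H)}(\log B)^{b_R(H)-1}$ \emph{uniformly in} $B$. This uniform tail bound rests on Corollary \ref{corollary unif upper bound} applied in each fibre — whose implied constant decays as the minimal height $H_{\min,\psi}$ of a restricted-ramification lift of $\psi$ grows — together with the fact that $N_\psi(k,R,H,B)=0$ for $B<H_{\min,\psi}$ and a count of the $\psi$ with $H_{\min,\psi}$ below a given bound. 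Combining the two steps gives
\[
\lim_{B\to\infty}\frac{N(k,R,H,B)}{B^{a_R(H)}(\log B)^{b_R(H)-1}}=\sum_{\psi}c_{k,R,\langle M_R(H)\rangle_\psi,H},
\]
the sum being finite (a monotone limit of dominated partial sums) and effectively over the $\psi$ that embed, which is formula (\ref{lc unbalanced g ext restr ram}).

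I expect the main obstacle to be precisely this uniform tail estimate: making the dependence of the constant in Corollary \ref{corollary unif upper bound} on $H_{\min,\psi}$ quantitative, and bounding the number of $\psi$ of bounded ramification, well enough that the fibrewise bounds sum to something arbitrarily small over the infinitely many $\psi$ — in other words, checking that our situation meets the hypotheses of the Rome--Koymans dominated convergence machinery. The remaining points — that each fibre really is governed by Theorem \ref{balanced ht restr ram thm}, that $a_R(H)$ and $b_R(H)$ are independent of $\psi$, and that the constants $c_{k,R,\langle M_R(H)\rangle_\psi,H}$ add up as stated — are routine once the fibration is in place.
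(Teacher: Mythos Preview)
Your plan matches the paper's proof closely: fibre over $\pi\colon G\to G/N$ with $N=\langle M_R(H)\rangle$, obtain the fibrewise asymptotic from Theorem \ref{balanced ht restr ram thm}, and justify the interchange of limit and infinite sum by the Koymans--Rome dominated convergence argument. You also correctly single out the uniform tail bound as the crux.

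One point to sharpen: your proposed mechanism for the tail bound --- that the constant in Corollary \ref{corollary unif upper bound} \emph{decays} as $H_{\min,\psi}$ grows --- is not how that corollary works, and I do not see how to make it work directly. In Corollary \ref{corollary unif upper bound} the dependence on $H_{\min}$ is only used to cover small $B$; the constant itself carries no decay. The paper instead extracts the saving from the \emph{height}, not from the constant: for $v\notin S_{H,0}$ ramifying in $\psi$, the image $\rho_{G/N,v}(\psi_v)$ is nontrivial, so $\rho_{G,v}(\tilde\psi+t)\notin M_R(H)\cup\{0\}$ and hence $w(\rho_{G,v}(\tilde\psi+t))\ge(1+\epsilon)/a_R(H)$ for some fixed $\epsilon>0$. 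This forces
\[
H(\tilde\psi+t)\;\gg\;\Bigl(\prod_{v\mid\Phi_{S_{H,0}}(\psi)}q_v^{(1+\epsilon)/a_R(H)}\Bigr)\cdot H_{S_{H,0}(\psi)}(\tilde\psi+t),
\]
so the effective $B$ in the fibre is reduced by this factor. After an auxiliary step (Lemma \ref{remove surj}, again using Proposition \ref{existence of a lift}) to pass from $H_{S_{H,0}(\psi)}$ back to $H_{S_{H,0}}$ at the cost of $|N|^{|S_{H,0}(\psi)|}$, one applies Corollary \ref{corollary unif upper bound} with the reduced $B$ to get
\[
\frac{N_\psi(k,R,H,B)}{B^{a_R(H)}(\log B)^{b_R(H)-1}}\;\ll_{k,R,G,H}\;\frac{|N|^{[k:\QQ]\,\omega(\Phi(\psi))}}{\prod_{v\mid\Phi(\psi)}q_v^{1+\epsilon}}\;\ll\;\Phi(\psi)^{-1-\delta},
\]
which is summable over $\psi$. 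Once you replace your $H_{\min,\psi}$-decay heuristic by this weight-gain observation, the remainder of your outline goes through exactly as written.
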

\noindent To prove this result we will employ an argument due to Koymans and Rome in Step $1$ of the proof of \cite[Thm.~$1.1$]{koymansdct}. A difference perspective is offered by Alberts, Lemke Oliver, Wang and Wood via their inductive methods framework \cite[Thm.~$2.1$]{alberts2025inductivemethodscountingnumber}, however here we use the Koymans-Rome approach. Let $\pi : G \rightarrow G/\langle M_R(H) \rangle$ be the natural quotient map, $\psi \in \Hom(\Gamma_k, G/\langle M_R(H) \rangle)$ and define \[N_{\psi}(k,R,H,B) = \# \left\{ 
\varphi \in \Hom (\Gamma_k, G) : \begin{array}{ll}
     & \rho_{G,v}(\varphi_v) \in R \cup \{0\} \:\ \forall v \not\in S, \\
     & H(\varphi) \leq B, \: \pi \circ \varphi = \psi
\end{array} \right\}\]  and the aim is to use the dominated convergence theorem to show that \[\sum_{\psi }\lim_{B \rightarrow \infty }\frac{N_{\psi}(k,R,H,B)}{B^{a_R(H)} (\log B)^{b_R(H) -1}} = \lim_{B \rightarrow \infty } \sum_{\psi }\frac{N_{\psi}(k,R,H,B)}{B^{a_R(H)} (\log B)^{b_R(H) -1}}.\] This in turn gives that the sum of leading constants \[ c_{k,R,G,H} = \sum_{\psi \in \Hom(\Gamma_k, G/\langle M_R(H) \rangle)}c_{k,R,\langle M_R(H) \rangle_{\psi},H}\] converges. In order to apply the dominated convergence theorem we first find a uniform upper bound for the quantity \begin{equation}\label{N'_i(X)}
    \frac{N_{\psi}(k,R,H,B)}{B^{a_R(H)} (\log B)^{b_R(H) -1}}.
\end{equation}

\noindent Consider the numerator of (\ref{N'_i(X)}). If $\psi : \Gamma_k \rightarrow G/\langle M_R(H) \rangle$ does not admit a lift to $G$ whose ramification type is restricted by $R$, then \[N_{\psi}(k,R,H,B) = \# \left\{ 
\varphi \in \Hom (\Gamma_k, G) : \begin{array}{ll}
     & \rho_{G,v}(\varphi_v) \in R \cup \{0\} \:\ \forall v \not\in S, \\
     & H(\varphi) \leq B, \: \pi \circ \varphi = \psi
\end{array} \right\} = 0\] thus we may assume that a lift exists. Recall that $f_R(\varphi) = 1$ if $\rho_{G,v}(\varphi_v) \in R \cup \{0\}$ for all $v \not\in S$. By Proposition \ref{existence of a lift}, we may choose a lift $\tilde{\psi} : \Gamma_k \rightarrow G$ such that $\tilde{\psi}$ is only ramified at the set of places $S_0$ from Proposition \ref{existence of a lift} and the places which ramify in $\psi$ and such that $f_R(\tilde{\psi})=1$. Moreover, every other lift of $\psi$ is given by $\tilde{\psi}+t$ for $t: \Gamma_k \rightarrow \langle M_R(H) \rangle$, so that the lifts are parametrised by homomorphisms $t : \Gamma_k \rightarrow \langle M_R(H) \rangle$. We may rewrite (\ref{N'_i(X)}) as \[\frac{\# \{t :\Gamma_k \rightarrow \langle M_R(H) \rangle : \tilde{\psi} + t \: \text{surj}., \: H(\tilde{\psi} + t) \leq B, \: f_R(\tilde{\psi} + t) = 1 \}}{B^{a_R(H)}(\log B)^{b_R(H)-1}}.\] If $T$ is a set of places of $k$ containing the wild places, we denote by $H_T(\tilde{\psi} + t)$ the height \[H_T(\tilde{\psi} + t) = \prod_{v \not\in T}H_v(\tilde{\psi} + t).\] Furthermore, we will write $S_H$ for the finite set of places such that the formula $H_v(\tilde{\psi} + t) = q_v^{w(\rho_{G,v}(\tilde{\psi} + t))}$ does not hold and $S_{H,0} = S_H \cup S_0$.
 \begin{lem} \label{remove surj}
 For all finite sets of places $T$ containing $S_{H,0}$, we have the following upper bound: \begin{align*}
    \# \{t: \Gamma_k \rightarrow \langle M_R(H) \rangle : \: & \tilde{\psi} + t \: \text{surj.}, \: H_T(\tilde{\psi} + t) \leq X\} \\
    & \ll_{k,G} |\langle M_R(H) \rangle|^{  |T|}\#  \{t: \Gamma_k \rightarrow \langle M_R(H) \rangle :  H_{S_{H,0}}(\tilde{\psi} + t) \leq X \}.
\end{align*} 
\end{lem}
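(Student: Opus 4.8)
The plan is to throw away the surjectivity condition — which only makes the left-hand count smaller — and then to estimate $\#\{t : H_T(\tilde\psi+t)\le X\}$ by fibring over the local ramification data at the finitely many places in $T\setminus S_{H,0}$. Write $A=\langle M_R(H)\rangle$. Since $H$ is big all local heights are $\ge 1$, so $H_{S_{H,0}}(\tilde\psi+t)=H_T(\tilde\psi+t)\prod_{v\in T\setminus S_{H,0}}H_v((\tilde\psi+t)_v)\ge H_T(\tilde\psi+t)$, and the content of the lemma is that replacing the coarser truncation $H_T$ by $H_{S_{H,0}}$ costs at most a factor $|A|^{|T|}$. After dropping surjectivity it therefore suffices to prove $\#\{t\in\Hom(\Gamma_k,A) : H_T(\tilde\psi+t)\le X\}\le |A|^{|T|}\,\#\{t\in\Hom(\Gamma_k,A) : H_{S_{H,0}}(\tilde\psi+t)\le X\}$.

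First I would set up the fibration. Let $\Phi : \Hom(\Gamma_k,A)\to P:=\prod_{v\in T\setminus S_{H,0}}\Hom(\Gamma_{k_v},A)/\Hom(\Gamma_{k_v}^{\unr},A)$ be the restriction map. By Lemma \ref{properties}(3) each factor of $P$ is isomorphic to $A(-1)^{\Gamma_{k_v}}$, so $|P|\le |A|^{|T\setminus S_{H,0}|}\le |A|^{|T|}$. Because $S_0\subseteq S_{H,0}$, the finite set $T\setminus S_{H,0}$ is disjoint from the set $S_0$ produced by Proposition \ref{existence of a lift}; applying that proposition to $A$ shows that $\Phi$ is surjective and — this is the point that matters — that every class $c\in P$ has a representative $t_c\in\Phi^{-1}(c)$ which is \emph{unramified outside $T$}.

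Next I would compare a single fibre with the clean count on the right. Fix $c\in P$ and write any element of $\Phi^{-1}(c)$ as $t_c+s$ with $s\in\ker\Phi$. For $v\notin T$ the character $(t_c)_v$ is unramified, hence $\rho_{G,v}((t_c)_v)=1$ by Lemma \ref{properties}(2); since $\rho_{G,v}$ is a homomorphism (Lemma \ref{properties}(1)) this gives $\rho_{G,v}((\tilde\psi+t_c+s)_v)=\rho_{G,v}((\tilde\psi+s)_v)$, and as $v\notin S_H$ (because $S_H\subseteq S_{H,0}\subseteq T$) we conclude $H_v((\tilde\psi+t_c+s)_v)=H_v((\tilde\psi+s)_v)$. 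Taking the product over $v\notin T$ gives $H_T(\tilde\psi+t_c+s)=H_T(\tilde\psi+s)$, a quantity independent of $c$, so $\#\{t : H_T(\tilde\psi+t)\le X\}=|P|\cdot\#\{s\in\ker\Phi : H_T(\tilde\psi+s)\le X\}$. To finish, observe that for $s\in\ker\Phi$ every $s_v$ with $v\in T\setminus S_{H,0}$ is unramified, and $\tilde\psi$ is itself unramified at each such $v$ (its ramified places lie in $S_{H,0}$), so $\rho_{G,v}((\tilde\psi+s)_v)=1$ and $H_v((\tilde\psi+s)_v)=q_v^{w(1)}=1$ there. Hence $H_{S_{H,0}}(\tilde\psi+s)=H_T(\tilde\psi+s)$ for such $s$, so $\{s\in\ker\Phi : H_T(\tilde\psi+s)\le X\}\subseteq\{t\in\Hom(\Gamma_k,A) : H_{S_{H,0}}(\tilde\psi+t)\le X\}$; combined with $|P|\le |A|^{|T|}$ and the count above, this proves the lemma.

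\emph{The main obstacle} is the fibring step: one must choose the fibre representatives $t_c$ to be unramified outside $T$, which is precisely what renders the translation $t\mapsto t-t_c$ between fibres invisible to the truncated height $H_T$ and hence forces all $|P|$ fibres to have the same size; this is the only reason Proposition \ref{existence of a lift} is invoked. A smaller but real point is that the final comparison uses that $\tilde\psi$ is unramified at every place of $T\setminus S_{H,0}$; this holds because the lift $\tilde\psi$ was chosen to ramify only within $S_{H,0}$, and were it not so those finitely many places would contribute an additional constant factor to the comparison, which is the only place where the precise definition of $S_{H,0}$ enters.
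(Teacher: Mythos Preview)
Your argument is essentially the paper's. The paper also invokes Proposition~\ref{existence of a lift} to translate by a character ramified only in $T$: for each $t$ on the left it picks $t'$ (ramified only at $T$) with $\tilde\psi+t+t'$ unramified on $T\setminus S_{H,0}$, verifies $H_{S_{H,0}}(\tilde\psi+t+t')=H_T(\tilde\psi+t)$ via the same ramification-type computation you give, and bounds the multiplicity of $t\mapsto t+t'$ by the number of admissible $t'$, namely $\ll_{k,G}|\langle M_R(H)\rangle|^{|T|}$; your explicit fibration over $P$ with representatives $t_c$ is the identical manoeuvre in more structured form (your $t_c$ is the paper's $-t'$, your $s\in\ker\Phi$ is the paper's $t+t'$).
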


\begin{proof}
    Let $ H_T(\tilde{\psi} + t) \leq X$. By Proposition \ref{existence of a lift} we may choose $t'$ that is only ramified at $T$ but such that $\tilde{\psi} + t + t'$ is unramified at $T \backslash S_{H,0}$. In particular $\rho_{G,v}(\tilde{\psi} + t + t')$ is trivial for $v \in T \backslash S_{H,0}$. Hence \[H_{S_{H,0}}(\tilde{\psi} + t + t') = \prod_{v \not\in S_{H,0}}H_v(\tilde{\psi} + t + t') = \prod_{v \not\in S_{H,0}}q_v^{w(\rho_{G,v}(\tilde{\psi} + t + t'))}.\] This final product can be written as \[\prod_{v \not\in T}q_v^{w(\rho_{G,v}(\tilde{\psi} + t + t'))}\prod_{v \in T \backslash S_{H,0}}q_v^{w(\rho_{G,v}(\tilde{\psi} + t + t'))} \] and the latter of the two products is trivial as $\tilde{\psi} + t + t'$ is unramified for $v \in T \backslash S_{H,0}$. But $t'$ is unramified for $v \not\in T$ and thus the ramification type evaluated at $t'$ is trivial at these places. Hence we get that $H_T(\tilde{\psi} + t) = H_{S_{H,0}}(\tilde{\psi} + t + t')$. Furthermore there are $ \ll_{k,G}|\langle M_R(H) \rangle|^{|T|}$ choices for $t'$. 
\end{proof}

\begin{lem}\label{unif upper bound}
    Let $\Phi(\psi)$ be the norm of the conductor of the homomorphism $\psi: \Gamma_k \rightarrow G/\langle M_R(H) \rangle$. There exists some $\epsilon > 0$ that depends at most on $k,R,G$ and $H$ such that \[\frac{N_{\psi}(k,R,H,B)}{B^{a_R(H)}(\log B)^{b_R(H)-1}} \ll_{k,R,G,H} \frac{|\langle M_R(H) \rangle|^{[k:\QQ]\omega(\Phi(\psi))}}{\prod_{v \mid \Phi(\psi)}q_v^{1 + \epsilon}}\] where $q_v$ is the cardinality of the residue field at $v$ and the implied constant does not depend on $\psi$.
\end{lem}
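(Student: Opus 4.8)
The plan is to use the parametrisation of the lifts of $\psi$ by $\Hom(\Gamma_k,A)$, where $A:=\langle M_R(H)\rangle$, together with a fixed lift $\tilde\psi$ of $\psi$ as constructed in the discussion preceding the lemma (ramified only at $S_0\cup\{v\mid\Phi(\psi)\}$ and with $f_R(\tilde\psi)=1$), and to extract the decay in $\Phi(\psi)$ from the primes where $\psi$ ramifies. Set $m:=a_R(H)^{-1}=\min_{\gamma\in R}w(\gamma)$ and $m':=\min_{\gamma\in R\setminus A(-1)}w(\gamma)$. The weight-$m$ elements of $R$ are by definition precisely $M_R(H)$, and these generate $A$, so they lie in $A(-1)$; hence no element of $R\setminus A(-1)$ has weight $m$, giving $m'>m$, and I put $\epsilon:=m'/m-1>0$ (if $R\subseteq A(-1)$ then any lift with restricted ramification forces $\psi$ to be unramified outside $S$, leaving only $O_{k,G,S}(1)$ relevant $\psi$ and a trivial bound, so I assume $R\not\subseteq A(-1)$). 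Enlarging $S_{H,0}$ once and for all (harmlessly) so that it contains $S$ and satisfies the hypotheses of Theorem~\ref{balanced ht restr ram thm}, set $T_\psi:=\{v:v\mid\Phi(\psi)\}\setminus S_{H,0}$; these are tame places outside $S$ at which $\psi$, hence every lift $\tilde\psi+t$, ramifies. For such $v$, functoriality of the ramification type and left-exactness of the Tate twist applied to $0\to A\to G\to G/A\to 0$ show that the image of $\rho_{G,v}((\tilde\psi+t)_v)$ in $(G/A)(-1)$ is $\rho_{G/A,v}(\psi_v)\neq 0$, so $\rho_{G,v}((\tilde\psi+t)_v)\notin A(-1)$; imposing $f_{R,v}(\tilde\psi+t)=1$ then puts it in $R\setminus A(-1)$, whence $H_v(\tilde\psi+t)=q_v^{w(\rho_{G,v}((\tilde\psi+t)_v))}\geq q_v^{m'}$.

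Next I would reduce $N_\psi(k,R,H,B)$ to a count governed by Corollary~\ref{corollary unif upper bound}. Write $U:=\Omega_k\setminus(S_{H,0}\cup T_\psi)$, the places where $\tilde\psi$ is unramified, so that $H_v(\tilde\psi+t)=H_v(t)$ there (viewing $t$ as $G$-valued via $A\hookrightarrow G$) and $\rho_{G,v}((\tilde\psi+t)_v)=\rho_{G,v}(t_v)$. Bounding $\prod_{v\in S_{H,0}}H_v(\tilde\psi+t)$ below by a positive constant $c_1$ and using the estimate above at each $v\in T_\psi$, the constraint $H(\tilde\psi+t)\leq B$ forces $\prod_{v\in U}H_v(t)\leq Y:=B/(c_1\prod_{v\in T_\psi}q_v^{m'})$, while $f_R(\tilde\psi+t)=1$ forces $\rho_{G,v}(t_v)\in R_A\cup\{0\}$ for all $v\in U$, where $R_A:=R\cap A(-1)$. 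Dropping surjectivity gives
\[N_\psi(k,R,H,B)\leq\#\{t\in\Hom(\Gamma_k,A):\ \textstyle\prod_{v\in U}H_v(t)\leq Y,\ \rho_{G,v}(t_v)\in R_A\cup\{0\}\ \forall v\in U\}.\]
I would then run the modification step from the proof of Lemma~\ref{remove surj}, but for the group $A$ with trivial base point: for each such $t$, Proposition~\ref{existence of a lift} (applied to $A$, with $S_0\subseteq S_{H,0}$) yields $t'\in\Hom(\Gamma_k,A)$ ramified only on $S_{H,0}\cup T_\psi$ with $t+t'$ unramified on $T_\psi$ --- legitimate precisely because the ramification of $t$ is already $A$-type --- and then $\prod_{v\notin S_{H,0}}H_v(t+t')=\prod_{v\in U}H_v(t)\leq Y$, the $R_A$-restriction survives on all of $\Omega_k\setminus S_{H,0}$, and $t\mapsto t+t'$ has fibres of size $\ll_{k,G}|A|^{|S_{H,0}|+|T_\psi|}$. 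Using in addition $\prod_{v\notin S_{H,0}}H_v(s)\geq H(s)/C_2$ for a positive constant $C_2$, this produces
\[N_\psi(k,R,H,B)\ll_{k,G}|A|^{|S_{H,0}|}\,|A|^{|T_\psi|}\,\#\{s\in\Hom(\Gamma_k,A):\ H(s)\leq C_2Y,\ \rho_{G,v}(s_v)\in R_A\cup\{0\}\ \forall v\notin S_{H,0}\}.\]

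Finally, $R_A$ is a non-empty Galois-stable subset of $A(-1)^*$ with $a_{R_A}=a_R(H)$, $b_{R_A}=b_R(H)$ and $M_{R_A}=M_R(H)$, so the induced height on $\Hom(\Gamma_k,A)$ is balanced with respect to $R_A$; Corollary~\ref{corollary unif upper bound}, applied with $S_{H,0}$ in the role of $S$, bounds the last count by $\ll(C_2Y)^{a_R(H)}(\log C_2Y)^{b_R(H)-1}$. Since $m'a_R(H)=m'/m=1+\epsilon$ we get $(C_2Y)^{a_R(H)}=C_3\,B^{a_R(H)}\prod_{v\in T_\psi}q_v^{-(1+\epsilon)}$ and, uniformly in $\psi$, $(\log C_2Y)^{b_R(H)-1}\ll(\log B)^{b_R(H)-1}$ (as $Y\leq C_2B/c_1$, a smaller $Y$ only helping, and bounded $B$ being trivial). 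The number of prime ideals dividing $\Phi(\psi)$ is at most $[k:\QQ]\,\omega(\Phi(\psi))$, so $|A|^{|T_\psi|}\leq|A|^{[k:\QQ]\,\omega(\Phi(\psi))}$, and $\prod_{v\in T_\psi}q_v^{-(1+\epsilon)}\leq\big(\prod_{v\in S_{H,0}}q_v^{1+\epsilon}\big)\prod_{v\mid\Phi(\psi)}q_v^{-(1+\epsilon)}$; absorbing every $\psi$-independent quantity into the implied constant yields the stated bound. The main obstacle is the middle paragraph: one has to check that the ramification of $t$ and of the correcting homomorphism $t'$ takes values in $A=\langle M_R(H)\rangle$, which is exactly what allows Proposition~\ref{existence of a lift} to be invoked at the primes dividing $\Phi(\psi)$ --- this is why the whole argument is organised around the quotient $G\to G/\langle M_R(H)\rangle$ --- together with the bookkeeping turning $|T_\psi|$ into $[k:\QQ]\,\omega(\Phi(\psi))$.
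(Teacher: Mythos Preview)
Your argument is correct and follows essentially the same strategy as the paper's proof: extract a factor $\prod_{v\in T_\psi}q_v^{(1+\epsilon)/a_R(H)}$ from the height at the primes where $\psi$ ramifies, use the modification trick from Proposition~\ref{existence of a lift} to replace the awkward set of places by the fixed set $S_{H,0}$, and finish with Corollary~\ref{corollary unif upper bound} applied to $A=\langle M_R(H)\rangle$ with $R_A=R\cap A(-1)$.

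The one genuine organisational difference is where the modification trick is applied. The paper invokes Lemma~\ref{remove surj}, which runs the trick on the $G$-valued cocycle $\tilde\psi+t$ and then appeals to an inequality of the shape $H(t)^{|G/A|}\ll H_{S_{H,0}}(\tilde\psi+t)$ to pass to a count of $t$ alone. You instead note at the outset that on $U$ one has $H_v(\tilde\psi+t)=H_v(t)$, and then run the modification trick directly on the $A$-valued cocycle $t$. This is cleaner: your correcting homomorphism $t'$ manifestly lands in $A$ (since it only has to cancel $\rho_{A,v}(t_v)\in A(-1)$), the $R_A$-restriction visibly survives, and you avoid the paper's intermediate height comparison. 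The cost is the same---fibres of size $\ll_{k,G}|A|^{|S_{H,0}\cup T_\psi|}$---and the endgame via Corollary~\ref{corollary unif upper bound} is identical. So: same proof, slightly tidier bookkeeping on your side.
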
 \noindent Note also that this bound does not depend on $\tilde{\psi}$ or $t$.
\begin{proof}
    
 Set $S_{H,0}(\psi)$ to be the union of the set of places $S_0$ from Proposition \ref{existence of a lift} and the finite set of bad places $S_H$ for the height $H$ joined with the ramification locus of $\psi$. We will bound the counting function $N_{\psi}(k,R,H,B)$ in the numerator of (\ref{N'_i(X)}) using Lemma \ref{remove surj} and Corollary \ref{corollary unif upper bound}. Let $\Phi_{S_{H,0}}(\psi) = \prod_{v \not\in S_{H,0}}\Phi_v(\psi)$. If $v \mid \Phi_{S_{H,0}}(\psi)$, then it is a place outside $S_{H,0}$ in the ramification locus of $\psi$. At these places $\rho_{G,v}(\psi) \in (G/\langle M_R(H) \rangle)(-1)$ is non-trivial and thus satisfies that $w(\rho_{G,v}(\psi)) > a_R(H)^{-1}$. In particular, by our choice of lift, $\tilde{\psi}$ ramifies at the same places as $\psi$ and thus we have for all places $v \mid \Phi_{S_{H,0}}(\psi)$ that \begin{equation}\label{ramify in psi}
w(\rho_{G,v}(\tilde{\psi} + t)) = w(\rho_{G,v}(\tilde{\psi}))w(\rho_{G,v}(t))  \geq (1+\epsilon)a_R(H)^{-1}.\end{equation} On the other hand if $v \in \Omega_k \backslash S_{H,0}$ ramifies in $t: \Gamma_k \rightarrow \langle M_R(H) \rangle$ but not in $\psi$, we have $w(\rho_{G,v}(\tilde{\psi}))$ is trivial and $w(\rho_{G,v}(t)) = a_R(H)^{-1}$, so \begin{equation}\label{ramify in t}
 w(\rho_{G,v}(\tilde{\psi} + t)) = w(\rho_{G,v}(\tilde{\psi}))w(\rho_{G,v}(t)) = a_R(H)^{-1}.\end{equation} Using (\ref{ramify in psi}), and the fact that the lifts of $\psi$ are parametrised by homomorphisms $t: \Gamma_k \rightarrow \langle M_R(H) \rangle$ we may bound the numerator of (\ref{N'_i(X)}) by \[\# \left\{t: \Gamma_k \rightarrow \langle M_R(H) \rangle : \tilde{\psi} + t \: \text{surj}., \: H_{S_{H,0}(\psi)}(\tilde{\psi} + t) \ll_{k,G} \frac{B}{\prod_{v | \Phi_{S_{H,0}}(\psi)}q_v^{(1+\epsilon)a_R(H)^{-1}}}\right\}.\] This is now in the correct form to apply Lemma \ref{remove surj}. By setting $T = S_{H,0}(\psi)$ and $X = \frac{B}{\prod_{v | \Phi_{S_{H,0}}(\psi)}q_v^{(1+\epsilon)a_R(H)^{-1}}}$ in the statement of Lemma \ref{remove surj}, we obtain the upper bound for (\ref{N'_i(X)}) given by
\begin{align*}
   & \ll_{k,R,G,H} \\
   & \frac{|\langle M_R(H) \rangle|^{|S_{H,0}(\psi)|} \# \left\{ t:\Gamma_k \rightarrow \langle M_R(H) \rangle : \begin{array}{ll}
      & H_{S_{H,0}}(\tilde{\psi} + t) \leq  \frac{B}{\prod_{v | \Phi_{S_{H,0}}(\psi)}q_v^{(1+\epsilon)a_R(H)^{-1}}}, \\
      & f_R(\tilde{\psi} + t) = 1
 \end{array} \right\}}{B^{a_R(H)}(\log B)^{b_R(H) -1}},
\end{align*}
 allowing us to remove the surjectivity condition on $\tilde{\psi} + t$. It follows from (\ref{ramify in psi}) and (\ref{ramify in t}) that  $H(t) ^{|G/\langle M_R(H) \rangle|} \ll_{k,G,H} H_{S_{H,0}}(t) ^{|G/\langle M_R(H) \rangle|} \leq H_{S_{H,0}}(\tilde{\psi} + t)$. Let $R' = R \cap \langle M_R(H) \rangle$. Now we wish to upper bound \[\# \left\{ t:\Gamma_k \rightarrow \langle M_R(H) \rangle : f_{R'}(t) = 1, \: H(t)^{|G/\langle M_R(H) \rangle|} \leq  \frac{B}{\prod_{v \mid \Phi_{S_{H,0}}(\psi)}q_v^{(1+\epsilon)a_R(H)^{-1}}} \right\}.\] Clearly as $\langle M_R(H) \rangle$ is generated by $M_R(H)$, $H$ is balanced with respect to $R$ when restricted to $\langle M_R(H) \rangle$. By applying Corollary \ref{corollary unif upper bound} with $G = \langle M_R(H) \rangle$ and $R = R'$, we have the upper bound \begin{multline*}
     \# \left\{ t:\Gamma_k \rightarrow \langle M_R(H) \rangle :f_{R'}(t) = 1, \: H(t)^{|G/\langle M_R(H) \rangle|} \leq  \frac{B}{\prod_{v \mid \Phi_{S_{H,0}}(\psi)}q_v^{(1+\epsilon)a_R(H)^{-1}}} \right\} \\
     \ll_{k,R,G,H}\left(\frac{B^{a_R(H)}}{\prod_{v | \Phi_{S_{H,0}}(\psi)}q_v^{(1+\epsilon)}}\right)(\log B)^{b_R(H)-1},
 \end{multline*} as the non-surjective homomorphisms $t : \Gamma_k \rightarrow \langle M_R(H) \rangle$ are negligible. We also have the bounds $|S_{H,0}(\psi)| \ll_{k,G}[k:\QQ] \omega(\Phi(\psi))$ and $\Phi(\psi) \ll_{k,G} \Phi_{S_{H,0}}(\psi)$. Putting everything together we obtain the upper bound \begin{equation*}\label{unif up bnd}
     \frac{N_{\psi}(k,R,H,B)}{B^{a_R(H)}(\log B)^{b_R(H) -1}} \ll_{k,G, R,H} \frac{|\langle M_R(H) \rangle|^{[k:\QQ]\omega(\Phi(\psi))}}{\prod_{v | \Phi(\psi)}q_v^{1+\epsilon}}. \qedhere \end{equation*}\end{proof}

 \begin{proof}[Proof of Theorem \ref{restr. ram. gen hts}]

 The uniform upper bound in Lemma \ref{unif upper bound} holds for each $\psi$, and \[\frac{|\langle M_R(H) \rangle|^{[k:\QQ]\omega(\Phi(\psi))}}{\prod_{v \mid \Phi(\psi)}q_v^{1+\epsilon}} \ll_{k,G}\frac{1}{\Phi(\psi)^{1+ \delta}}\] for some $\delta >0$ and thus \[\sum_{\psi \in \Hom(\Gamma_k, G/\langle M_R(H) \rangle)} \frac{|\langle M_R(H) \rangle|^{[k:\QQ]\omega(\Phi(\psi))}}{\prod_{v | \Phi(\psi)}q_v^{1+\epsilon}} \] converges. To apply dominated convergence we need to show that that \[\lim_{B \rightarrow \infty}\frac{N_{\psi}(k,R,H,B)}{B^{a_R(H)} (\log B)^{b_R(H) -1}}\] exists and is finite. We know that $H$ is balanced with respect to $R$ when restricted to the $\langle M_R(H) \rangle$-extensions and since $G$ is a constant abelian group, the $\langle M_R(H) \rangle$-extensions correspond to the collection of $G$-extensions realising a given $G/\langle M_R(H) \rangle$-extension. Thus, by Theorem \ref{balanced ht restr ram thm}, we have that as $B\rightarrow \infty$, \[\frac{N_{\psi}(k,R,H,B)}{B^{a_R(H)} (\log B)^{b_R(H) -1}} \sim c_{k,R,\langle M_R(H) \rangle_{\psi},H},\] and each inner twist $\langle M_R(H) \rangle_{\psi}$ is trivial as $\langle M_R(H) \rangle$ is abelian. By dominated convergence, the counting function $N(k,R,H,B) = \sum_{\psi}N_{\psi}(k,R,H,B)$ satisfies the asymptotic formula in Theorem \ref{restr. ram. gen hts} and the leading constant $c_{k,R,G,H}$ is given by the convergent sum (\ref{lc unbalanced g ext restr ram}).\end{proof} \noindent In the case $R = G(-1)^*$, we have proven \cite[Conj.~$9.6$]{loughransantens} for finite abelian groups $G$.

\section{Interpretation of results via stacks}\label{inter resulst via stacks}

There is a natural formulation of Theorem \ref{balanced ht restr ram thm} and Theorem \ref{restr. ram. gen hts} in terms of the stack $BG$ and the Batyrev-Manin conjecture on stacks. Using this viewpoint we can obtain a formula for the leading constant in terms of Tamagawa measures and Brauer groups. These formulas may be used to prove results concerning the equidistribution of rational points on $BG$, corresponding to what is informally known as the Malle-Bhargava heuristics in the Malle conjecture literature, although with potential Brauer-Manin obstruction. In addition to this, we can use this version of the leading constant to show that the existence of number fields with restricted ramification type is controlled by a Brauer-Manin obstruction on $BG$.

\subsection{Brauer groups of stacks and the Brauer-Manin pairing}

We begin by considering the stack $BG$ for a finite abelian group $G$. By \cite[Lem.~$2.1$]{loughransantens}, for a finite abelian group $G$, the groupoid $BG(k)$ corresponds to the groupoid of homomorphisms $\Gamma_k \rightarrow G$ where the isomorphisms are given by conjugation in $G$ (in our case conjugation is trivial as $G$ is abelian). We write $BG[k]$ for the set of isomorphism classes of $BG(k)$. In this way the problem of counting homomorphisms $\Gamma_k \rightarrow G$ from Theorem \ref{main thm abt restr ram} is equivalent to counting elements of $BG[k]$.

The Brauer-Manin pairing we wish to use comes from the partially unramified Brauer group. We first define the partial adelic space, which is the natural space in order to ensure that the Brauer-Manin pairing is well-defined for the elements of the partially unramified Brauer group appearing in the leading constant. \begin{defi}[Partial adelic space]
    Let $C \subset G(-1)^{*}$ be a Galois-stable subset and $\rho_{G,v}$ be the ramification type. For a tame non-archimedean place $v$ we define  \[BG(\mathcal{O}_v)_{C} = \{ \varphi_v \in BG(k_v) : \rho_{G,v}(\varphi_v) \in C \cup \{0\}\}.\] Then the \emph{partial adelic space with respect to C} is the limit over all finite sets of places $S$ given by \[BG(\AAA_k)_C = \lim_S \prod_{v \in S}BG(k_v) \prod_{v \not\in S}BG(\mathcal{O}_v)_C.\]
\end{defi}

In particular, the set $BG(\mathcal{O}_v)_{C}$ is the set of those homomorphisms $\varphi_v :\Gamma_{k_v} \rightarrow G$ such that the image of $\varphi_v$ under the ramification type is either trivial or in $C$. An element of $BG(\mathcal{O}_v)_{C}$ is called a \emph{partial $v$-adic integral point with respect to $C$}.

We also need the following \emph{partially unramified Brauer group} of $BG$. Let $C \subset G(-1)^{*}$. From \cite[Thm.~$7.4$]{loughransantens}, an element $b \in \br BG$ is in the group $\br_{C}BG$ if and only if $b$ evaluates trivially on $BG(\mathcal{O}_v)_{C}$ for all but finitely many places $v$. Moreover, by \cite[Cor.~$6.30$]{loughransantens} if the elements of $C$ generate $G$, the group $\br_C BG / \br k$ is finite. Using this partially unramified Brauer group, we have the following partially unramified Brauer-Manin pairing from \cite[Lem.~$7.5$]{loughransantens} \[ \br_C BG \times BG(\AAA_k)_C \rightarrow \QQ/\ZZ, \] which is well-defined and continuous.
 \noindent It is the partially unramified Brauer group in this pairing that will appear in our leading constant later on.

\subsection{Tamagawa measures}\label{tamagawa measure sec}

We define a Tamagawa measure for heights on $BG$ analogously to that used by Peyre in \cite{peyretamagawa}. This was first defined by Loughran and Santens in \cite[$\S 8$]{loughransantens}. We start with local Tamagawa measures.

\begin{defi}[Local Tamagawa measures] \label{loc tam measure}
  Let $v$ be a place of $k$ and $W_v \subseteq BG[k_v]$ be a subset. Then the local Tamagawa measure associated to the choice of height $H = (H_v)_{v \in \Omega_k}$ is defined to be \[\tau_{v,H_v}(W_v) = \sum_{\varphi_v \in [W_v]} \frac{1}{|\aut(\varphi_v)|H_v(\varphi_v)^{a(H)}},\] where $a(H) = (\min_{\gamma \in G(-1)^*}w(\gamma))^{-1}$ and $w$ is the weight function corresponding to $H$.
\end{defi} \noindent This sum is finite as $\text{char}(k_v) = 0$ and we have $\aut(\varphi_v) \cong G$ for all $\varphi_v \in BG[k_v]$, so this is a well-defined measure on the set $BG[k_v]$ of isomorphism classes of $k_v$ points of $BG(k_v)$. 

The global Tamagawa measures are the product of the local Tamagawa measures, and we ensure this product is convergent by introducing convergence factors. In the leading constant (\ref{lc c(k,R,G,H) euler}) we have the convergence factors $\zeta_{k,v}(1)^{-b_R(H)}$. We can take the corresponding global Tamagawa measure to be the measure from \cite[Lem.~$8.19$]{loughransantens}, that is, \begin{equation} \label{tamagawa to use}
\tau_{H} = (\Res_{s = 1}\zeta_{k}(s))^{b_R(H)}\prod_{v| \infty}\tau_{v,H_v}\prod_{v \nmid \infty}(1 - 1/q_v)^{b_R(H)}\tau_{v,H_v}.\end{equation} 

\subsection{The partially unramified Brauer group}
Recall the following definition of the set \[ \mathcal{X}(k,R,H) =  \left\{ x \in k^{\times} \otimes G^{\wedge}: \begin{array}{ll}
         &\text{For all but finitely many $v$ such that}  \\
         & \rho_{G,v}(\chi_v) \in M_R(H) \cup \{0\}\: \text{we have} \: \langle \chi_v , x_v \rangle = 1
    \end{array}
    \right\}\] from Theorem \ref{balanced ht restr ram thm}.

\begin{prop} \label{identif}

 Let $\varphi \in BG(\AAA_k)_{M_R(H)}$ and $H$ be balanced with respect to $R$. Then the set $\mathcal{X}(k, R, H)$ is finite and we have the equality \[\sum_{x \in \mathcal{X}(k, R, H)} \prod_v \langle \chi_v,x_v \rangle \quad = \sum_{b \in \br_{M_R(H)}BG/\br k} e^{2 \pi i \langle b , \varphi \rangle_{BM} }\] where $\chi \in \Hom(\AAA^{\times}/k^{\times},G)$ is the character corresponding to $\varphi$ under the global Artin map and $\langle \cdot , \cdot \rangle$ is the Pontryagin pairing.
\end{prop}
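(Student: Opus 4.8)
The plan is to identify both sides of the claimed identity with a common object, namely a character sum over a suitable finite quotient group, using Poitou–Tate duality to match the ``dual'' description $\mathcal{X}(k,R,H)$ coming from the Fourier-analytic computation with the Brauer–Manin description coming from the stacky formalism of \cite{loughransantens}. First I would set up the two sides precisely as finite abelian group characters. On the left, an element $x\in k^\times\otimes G^\wedge$ lies in $\mathcal{X}(k,R,H)$ precisely when the associated frobenian mean satisfies $b_R(H,x)=b_R(H)$, equivalently when the Dirichlet pole of $\hat f_{H,R}(x;s)$ matches that of $\hat f_{H,R}(1;s)$; by Lemma \ref{restricted ram lft} this is the condition that $\langle\chi_v,x_v\rangle=1$ for all but finitely many $v$ with $\rho_{G,v}(\chi_v)\in M_R(H)\cup\{0\}$. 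The first task is therefore to show $\mathcal{X}(k,R,H)$ is a finite group: here balancedness of $H$ with respect to $R$ is essential, since $M_R(H)$ generates $G$, so the conditions at the (infinitely many) places where $\rho_{G,v}$ hits $M_R(H)$ cut $k^\times\otimes G^\wedge$ down to a subgroup of a Selmer-type group, which is finite by the standard finiteness of Selmer groups (the same mechanism as in Proposition \ref{existence of a lift}, via $\Sha^1(k,\mu_n)\subseteq \mathrm{Sel}^n(k)$ being finite). Concretely $\mathcal{X}(k,R,H)$ should be identified with a subgroup of $H^1(k,\widehat G)$ or of $k^\times\otimes G^\wedge$ satisfying prescribed local triviality conditions dual to the ``$M_R(H)$-ramified'' local conditions.

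Next I would invoke the description of $\br_{M_R(H)}BG/\br k$ from \cite[Thm.~7.4, Cor.~6.30]{loughransantens}: it is finite (because $M_R(H)$ generates $G$) and, via the identification $\br BG \cong H^2$-type groups attached to $BG$ together with the residue/evaluation maps, it is Poitou–Tate dual to exactly the group $\mathcal{X}(k,R,H)$. More precisely, the partially unramified condition ``$b$ evaluates trivially on $BG(\mathcal{O}_v)_{M_R(H)}$ for almost all $v$'' is the orthogonal complement, under the local pairings $BG(k_v)\times \br BG\to\QQ/\ZZ$, of the subgroup of $\prod_v' BG(k_v)$ generated by the $M_R(H)$-integral points; dualizing this via the nine-term Poitou–Tate sequence for the finite module $\widehat G$ (or $\mu_n$-type twists thereof) yields a perfect pairing between $\br_{M_R(H)}BG/\br k$ and $\mathcal{X}(k,R,H)$. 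Under this duality the character $b\mapsto e^{2\pi i\langle b,\varphi\rangle_{BM}}$ on $\br_{M_R(H)}BG/\br k$ corresponds, for $\varphi\leftrightarrow\chi\in\Hom(\AAA^\times/k^\times,G)$, to evaluation at the element $\prod_v\langle\chi_v,x_v\rangle\in S^1$ of the dual group — this is just unwinding the definition of the Brauer–Manin pairing on $BG$ in terms of cup products and local invariants, together with the compatibility of the local and global Artin maps with the Pontryagin pairings recorded in \cite[Lem.~3.2]{HNPabelianext}.

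With the two finite groups in perfect duality and the explicit correspondence of characters in hand, the identity becomes the elementary orthogonality statement that for a finite abelian group $A$ with Pontryagin dual $A^\wedge$ and any $a\in A$, $\sum_{\lambda\in A^\wedge}\lambda(a)=\sum_{\mu\in A}\langle\mu,a'\rangle$ once one fixes the self-duality matching — i.e. both sides equal $\sum$ of a fixed character over a fixed finite group, hence are literally equal term by term after reindexing. The concrete steps in order: (1) prove $\mathcal{X}(k,R,H)$ is a finite subgroup of $k^\times\otimes G^\wedge$ via Selmer finiteness, using balancedness; (2) express $\br_{M_R(H)}BG/\br k$ as the Poitou–Tate dual of $\mathcal{X}(k,R,H)$, using \cite[Thm.~7.4, Cor.~6.30, Lem.~7.5]{loughransantens}; (3) check that under this duality $b\mapsto e^{2\pi i\langle b,\varphi\rangle_{BM}}$ is sent to $x\mapsto\prod_v\langle\chi_v,x_v\rangle$, by unwinding both pairings; (4) conclude by reindexing the sum. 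The main obstacle I expect is step (2)–(3): pinning down the exact form of the residue map $\br BG\to\bigoplus_v\ ?$ and verifying that ``partially unramified with respect to $M_R(H)$'' matches, under Poitou–Tate, precisely the local conditions defining $\mathcal{X}(k,R,H)$ (and not some slightly larger or smaller group), including getting the role of the finite bad set $S$ and the places dividing $|G|$ right. This is essentially a careful bookkeeping of local conditions and their annihilators, and it is where the hypothesis $\varphi\in BG(\AAA_k)_{M_R(H)}$ is used to guarantee the Brauer–Manin pairing is defined on the relevant adelic class.
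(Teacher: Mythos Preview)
Your proposal misidentifies the relationship between $\mathcal{X}(k,R,H)$ and $\br_{M_R(H)}BG/\br k$: they are not Poitou--Tate dual to one another, they are canonically \emph{equal} as subgroups of the same ambient group. For abelian $G$ the algebraic Brauer group satisfies $\br_1 BG/\br k \cong H^1(k,\widehat{G})$, and by \cite[Lem.~6.38]{loughransantens} there are no transcendental elements once $M_R(H)$ generates $G$, so $\br_{M_R(H)}BG/\br k$ sits inside $H^1(k,\widehat{G})$. On the other side, \cite[Lem.~10.23]{loughransantens} gives a canonical isomorphism $H^1(k,\widehat{G})\cong k^\times\otimes G^\sim$, and composing with $\QQ/\ZZ\to S^1$, $t\mapsto e^{2\pi i t}$, identifies this with $k^\times\otimes G^\wedge$, where $\mathcal{X}(k,R,H)$ lives. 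Under this identification the defining local conditions agree: by \cite[Thm.~7.4]{loughransantens}, $b\in\br_{M_R(H)}BG$ iff $b$ evaluates trivially on $BG(\mathcal{O}_v)_{M_R(H)}$ for almost all $v$, which unwinds to $\langle\chi_v,x_v\rangle=1$ for all $\chi_v$ with $\rho_{G,v}(\chi_v)\in M_R(H)\cup\{0\}$. So the two subgroups coincide, and finiteness is \cite[Cor.~6.30]{loughransantens} directly rather than a separate Selmer argument.

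The equality of sums is then term-by-term: if $x\leftrightarrow b$ under this identification, local Tate duality identifies $\prod_v\langle\chi_v,x_v\rangle$ with the cup product, which by \cite[Lem.~6.4]{loughransantens} is $e^{2\pi i\langle b,\varphi\rangle_{BM}}$. Your step (4), an orthogonality statement between dual groups, is neither needed nor sufficient: summing a character over a group $A$ and summing a character over $A^\wedge$ need not agree without an identification of $A$ with $A^\wedge$ that respects the evaluations --- which is precisely what the direct argument supplies. Poitou--Tate is in the background (it is why the Brauer--Manin pairing on $BG$ is computed by local Tate duality), but it pairs $\br BG/\br k\subset H^1(k,\widehat{G})$ with the \emph{points} $BG(k)\subset H^1(k,G)$, not with $\mathcal{X}$; both $\mathcal{X}$ and the partially unramified Brauer group live on the $\widehat{G}$ side.
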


\begin{proof}

Let $b \in \br BG$. By \cite[Thm.~$7.4$]{loughransantens} we have $b \in \br_{M_R(H)}BG$ if and only if it evaluates trivially on $BG(\mathcal{O}_v)_{M_R(H)}$ for all but finitely many places $v$. From \cite[Lem.~$10.23$]{loughransantens}, there is a canonical isomorphism $H^{1}(k, \widehat{G}) \cong k^{\times} \otimes G^{\sim}$, where $G^{\sim}$ is the $\QQ/\ZZ$-dual of $G$, which is isomorphic to the Pontryagin dual $G^{\wedge}$ of $G$ via the map $\QQ/\ZZ \rightarrow S^1$, $t \mapsto e^{2 \pi i t}$. By \cite[Lem.~$6.38$]{loughransantens}, since $M_R(H)$ generates $G$ and $G$ is abelian, there are no transcendental Brauer group elements. With respect to the isomorphism $H^{1}(k, \widehat{G}) \cong k^{\times} \otimes G^{\sim}$, there is an equality between $\mathcal{X}(k, R, H)$ and $\br_{M_R(H)}BG /\br k$. Moreover if $H$ is balanced we have that $M_R(H)$ generates $G$. The finiteness of $\mathcal{X}(k,R,H)$ then follows from the equality between $\mathcal{X}(k, R, H)$ and $\br_{M_R(H)}BG /\br k$ and \cite[Cor.~$6.30$]{loughransantens}.

Using local Tate duality, the Pontryagin pairing is identified with the cup product, and this in turn is identified with the Brauer-Manin pairing via \cite[Lem.~$6.4$]{loughransantens}. Furthermore, by local class field theory we have that for all but finitely many tame places $v$, the $\varphi_v \in BG(\mathcal{O}_v)_{M_R(H)}$ correspond to the $\chi_v \in \Hom(k_v^{\times}, G)$ such that $\rho_{G}(\chi_v) \in M_R(H) \cup \{0\}$. 
\end{proof}

\subsection{Counting number fields via the stack $BG$}

We will rewrite the leading constant $c_{k,R,G,H}$ from Theorem \ref{balanced ht restr ram thm} for a height $H$ that is balanced with respect to $R$ in terms of Tamagawa measures and Brauer groups. We will use the following lemma concerning the Tamagawa measure on the partial adelic space and sums of Euler products, and the map $\QQ/\ZZ \rightarrow S^{1} : t \mapsto e^{2 \pi i t}$.

\begin{lem} \label{lem 7.17}
    For each $b \in \br_{M_R(H)}BG/\br k$, consider the Euler product 
    \begin{align*}
        \hat{\tau}_H(b) & : = \int_{BG(\AAA_k)_{M_R(H)}} e^{2 \pi i \langle b, \varphi \rangle_{BM} } d \tau_{H}(\varphi) \\
        & = (\Res_{s=1}\zeta_k(s))^{b_R(H)} \prod_v\zeta_{k,v}(1)^{-b_R(H)} \int_{BG(k_v)} e^{2 \pi i \inv_vb(\varphi_v)} d \tau_{v,H_v}(\varphi_v).
    \end{align*} Then \[ |\br_{M_R(H)}BG / \br k| \tau_{H}(BG(\AAA_k)_{M_R(H)}^{\br}) = \sum_{b \in \br_{M_R(H)}BG / \br k}\hat{\tau}_H(b)\] is a finite sum of Euler products.
\end{lem}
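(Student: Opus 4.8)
The plan is to establish the claimed identity by combining the orthogonality relation for the Brauer--Manin pairing with the Euler product structure of the Tamagawa integral, reducing everything to what has already been proved in Proposition~\ref{identif}. First I would unwind the definition of $\tau_H(BG(\AAA_k)_{M_R(H)}^{\br})$: by definition the Brauer--Manin set is cut out inside $BG(\AAA_k)_{M_R(H)}$ by the condition that $\langle b,\varphi\rangle_{BM}=0$ for all $b\in\br_{M_R(H)}BG/\br k$, and since this is a finite group (by \cite[Cor.~$6.30$]{loughransantens}, using that $M_R(H)$ generates $G$ because $H$ is balanced with respect to $R$), the indicator function of the Brauer--Manin set can be written via the standard finite-group orthogonality identity
\[
\mathbbm{1}_{BG(\AAA_k)_{M_R(H)}^{\br}}(\varphi) \;=\; \frac{1}{|\br_{M_R(H)}BG/\br k|}\sum_{b\in \br_{M_R(H)}BG/\br k} e^{2\pi i \langle b,\varphi\rangle_{BM}}.
\]
Integrating this against $\tau_H$ over $BG(\AAA_k)_{M_R(H)}$ and interchanging the (finite) sum with the integral immediately yields
\[
|\br_{M_R(H)}BG/\br k|\,\tau_H\bigl(BG(\AAA_k)_{M_R(H)}^{\br}\bigr) \;=\; \sum_{b\in\br_{M_R(H)}BG/\br k}\hat\tau_H(b),
\]
which is the stated equality; the interchange is justified because the sum is finite and $\tau_H$ is a finite measure on the relevant space.

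Next I would verify that each $\hat\tau_H(b)$ is genuinely an Euler product, i.e.\ that the product expansion displayed in the statement is valid. This comes from the product structure of the Tamagawa measure (\ref{tamagawa to use}) together with the fact that the Brauer group element $b$, viewed adelically, decomposes as a sum of local invariants $\inv_v b(\varphi_v)$ that vanish for all but finitely many $v$ (this is precisely the characterisation of $\br_{M_R(H)}BG$ via \cite[Thm.~$7.4$]{loughransantens}). Thus $e^{2\pi i\langle b,\varphi\rangle_{BM}} = \prod_v e^{2\pi i \inv_v b(\varphi_v)}$ with only finitely many non-trivial factors, and the integral factors over places against the local measures $\zeta_{k,v}(1)^{-b_R(H)}\tau_{v,H_v}$, picking up the global factor $(\Res_{s=1}\zeta_k(s))^{b_R(H)}$. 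Convergence of this Euler product is exactly the content of the convergence-factor discussion in $\S$\ref{tamagawa measure sec}: at almost all $v$ the local factor is $\zeta_{k,v}(1)^{-b_R(H)}\tau_{v,H_v}(BG(k_v)_{M_R(H)})$, which by the local computation underlying Lemma~\ref{restricted ram lft} behaves like $1 + O(q_v^{-1-\epsilon})$ once the convergence factor is applied.

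The main obstacle I anticipate is making the orthogonality argument rigorous at the level of the \emph{topological} group $\br_{M_R(H)}BG/\br k$ acting continuously on $BG(\AAA_k)_{M_R(H)}$: one must check that the Brauer--Manin pairing $\langle\cdot,\cdot\rangle_{BM}$ is continuous (this is cited from \cite[Lem.~$7.5$]{loughransantens}), that the Brauer--Manin set is therefore closed and $\tau_H$-measurable, and that the orthogonality identity above holds pointwise on the nose rather than merely almost everywhere. Since the group is finite, no convergence issues arise in the sum itself, so the difficulty is purely bookkeeping: confirming that all the objects live on the same adelic space and that the finite sum of integrable functions may be integrated term by term. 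Once continuity and measurability are in hand, the identity is formal, and the Euler product claim reduces entirely to the already-established product decomposition of $\tau_H$ and the finiteness of the support of the local invariants.
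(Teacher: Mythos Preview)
Your proposal is correct and follows the standard route for such identities: finite-group orthogonality for the Brauer--Manin pairing combined with the product structure of the Tamagawa measure. The paper itself does not provide a proof here but simply cites \cite[Lem.~$8.21$]{loughransantens}, so there is nothing to compare against directly; your write-up is presumably what lies behind that citation, and the ingredients you identify (finiteness of $\br_{M_R(H)}BG/\br k$ via \cite[Cor.~$6.30$]{loughransantens}, continuity of the pairing via \cite[Lem.~$7.5$]{loughransantens}, and the convergence-factor analysis from $\S$\ref{tamagawa measure sec}) are exactly the right ones.
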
 

\begin{proof}
    See \cite[Lem.~$8.21$]{loughransantens}.
\end{proof}

\begin{lem}\label{lc tamagawa measures}
    Let $S$ be the set of places from Theorem \ref{balanced ht restr ram thm} and let $W_{R,S} = \prod_{v \in S}BG(k_v) \allowbreak \prod_{v \not\in S}BG(\mathcal{O}_v)_{R}$. The leading constant $c_{k,R,G,H}$ from Theorem \ref{balanced ht restr ram thm} is equal to \begin{equation}\label{lc tamagawa n(k,r,g,b)}
         \frac{ |G|a_R(H)^{b_R(H)-1}|\br_{M_R(H)}BG/\br k|\tau_H(W_{R,S} \cap BG(\AAA_k)_{M_R(H)}^{\br})}{|\widehat{G}(k)| (b_R(H)-1)!},\end{equation} where $\widehat{G}$ is the Cartier dual of $G$.
\end{lem}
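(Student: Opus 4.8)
The plan is to transform the explicit expression (\ref{lc c(k,R,G,H) euler}) for $c_{k,R,G,H}$, term by term, into the Tamagawa-measure formula (\ref{lc tamagawa n(k,r,g,b)}). There are two conceptual ingredients: Proposition \ref{identif}, which lets us replace the sum over the finite set $\mathcal{X}(k,R,H)$ by a sum over $\br_{M_R(H)}BG/\br k$ while turning the Pontryagin pairing into the Brauer--Manin pairing; and a $W_{R,S}$-restricted version of Lemma \ref{lem 7.17}, which recognises the resulting sum of Euler products as $|\br_{M_R(H)}BG/\br k|\,\tau_H(W_{R,S}\cap BG(\AAA_k)_{M_R(H)}^{\br})$.

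First I would re-index the outer sum. Expand the Euler products in (\ref{lc c(k,R,G,H) euler}) as a sum over tuples $(\chi_v)_v$ that are unramified at all but finitely many $v$; every such tuple corresponds, under the local Artin maps, to a point $\varphi=(\varphi_v)_v\in BG(\AAA_k)_{M_R(H)}$ (the finitely many ramified components automatically have ramification type in $R\cup\{0\}$, and all but finitely many in $\{0\}$). Interchanging the absolutely convergent sums over $x$ and over $(\chi_v)_v$ and applying Proposition \ref{identif} to each $\varphi$ replaces $\sum_{x\in\mathcal{X}(k,R,H)}\prod_v\langle\chi_v,x_v\rangle$ by $\sum_{b\in\br_{M_R(H)}BG/\br k}e^{2\pi i\langle b,\varphi\rangle_{BM}}$; re-assembling the Euler product then rewrites $\sum_x\prod_v(\cdots)$ as $\sum_b\prod_v(\cdots)$, where the $v$-th factor now carries $e^{2\pi i\inv_v b(\varphi_v)}$ in place of $\langle\chi_v,x_v\rangle$. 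This step is where the hypothesis that $H$ is balanced with respect to $R$ enters: it guarantees $M_R(H)$ generates $G$, hence (by Proposition \ref{identif} and \cite[Cor.~6.30, Lem.~6.38]{loughransantens}) that $\mathcal{X}(k,R,H)$ is finite, equals $\br_{M_R(H)}BG/\br k$, and that there are no transcendental classes to worry about.

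Next I would match the local factors with the local integrals of $\hat\tau_H$. For $v\notin S$, the sum in (\ref{lc c(k,R,G,H) euler}) runs over ramification types $\chi_v\in\Hom(\mathcal{O}_v^\times,G)$ with $\rho_{G,v}(\chi_v)\in R\cup\{0\}$; each lifts to exactly $|G|$ homomorphisms $\Gamma_{k_v}\to G$ of the same height and the same value of $\inv_v b$, so the factor $1/|\aut(\varphi_v)|=1/|G|$ in $\tau_{v,H_v}$ cancels this multiplicity and the $v$-th factor equals $\zeta_{k,v}(1)^{-b_R(H)}\int_{BG(\mathcal{O}_v)_R}e^{2\pi i\inv_v b(\varphi_v)}\,d\tau_{v,H_v}$. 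For finite $v\in S$ one sums over all of $BG[k_v]=\Hom(k_v^\times,G)$ with no multiplicity, and the prefactor $|G|^{-|S_f|}$ in (\ref{lc c(k,R,G,H) euler}) supplies exactly the missing $1/|\aut|$ at these places; for the archimedean places one is left with an excess $|G|^{r_1+r_2}$, where $r_1+r_2$ is the number of archimedean places. A short check (using that $BG(\mathcal{O}_v)_R\setminus BG(\mathcal{O}_v)_{M_R(H)}$ contributes convergent lower-order corrections, so that $W_{R,S}\cap BG(\AAA_k)_{M_R(H)}$ has $v$-adic component $BG(\mathcal{O}_v)_R$ for $v\notin S$ with the measure as written) then yields $\sum_x\prod_v(\cdots)=|G|^{r_1+r_2}(\Res_{s=1}\zeta_k(s))^{-b_R(H)}\sum_b\hat\tau_H^{W_{R,S}}(b)$, where $\hat\tau_H^{W_{R,S}}(b)=\int_{W_{R,S}\cap BG(\AAA_k)_{M_R(H)}}e^{2\pi i\langle b,\varphi\rangle_{BM}}\,d\tau_H(\varphi)$. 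The orthogonality argument of \cite[Lem.~8.21]{loughransantens} then applies verbatim to give $\sum_b\hat\tau_H^{W_{R,S}}(b)=|\br_{M_R(H)}BG/\br k|\,\tau_H(W_{R,S}\cap BG(\AAA_k)_{M_R(H)}^{\br})$.

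Finally I would collect constants. Plugging the above into (\ref{lc c(k,R,G,H) euler}), everything matches (\ref{lc tamagawa n(k,r,g,b)}) provided $|G|^{r_1+r_2}/|\mathcal{O}_k^\times\otimes G^\wedge|=|G|/|\widehat{G}(k)|$, i.e.\ $|\mathcal{O}_k^\times\otimes G^\wedge|=|\widehat{G}(k)|\cdot|G|^{r_1+r_2-1}$. This is immediate from Dirichlet's unit theorem, $\mathcal{O}_k^\times\cong\mu(k)\times\ZZ^{r_1+r_2-1}$ (with $\mu(k)$ the roots of unity in $k$), together with $|\mu(k)\otimes G^\wedge|=|\Hom(G,\mu(k))|=|\widehat{G}(k)|$ and $|G^\wedge|=|G|$. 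I expect the only genuinely delicate point to be the bookkeeping in the previous paragraph — keeping track of the various powers of $|G|$ coming from $|\aut|$, from the normalisation of the measure on $\Hom(k_v^\times,G)$, from the factor $|G|^{-|S_f|}$, and from the archimedean places, and being careful that the exponent in the local Tamagawa measures is the one attached to $M_R(H)$, namely $a_R(H)$; the rest is formal given \S\ref{inter resulst via stacks} and \cite{loughransantens}.
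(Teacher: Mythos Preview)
Your proposal is correct and follows essentially the same route as the paper: rewrite the local sums in (\ref{lc c(k,R,G,H) euler}) as Tamagawa integrals over $W_{R,S,v}$, use Proposition \ref{identif} to trade the sum over $\mathcal{X}(k,R,H)$ for a sum over $\br_{M_R(H)}BG/\br k$, and then invoke the orthogonality argument behind Lemma \ref{lem 7.17} restricted to $W_{R,S}$. The paper proceeds in a slightly different order (it first passes from $\Hom(\mathcal{O}_v^\times,G)$ to $\frac{1}{|G|}\sum_{\Hom(k_v^\times,G)}f_{R,v}$ and then applies Proposition \ref{identif} globally, rather than expanding, lifting, and reassembling), and it obtains the identity $|\mathcal{O}_k^\times\otimes G^\wedge|\cdot|G|^{|S_f|}=|\widehat{G}(k)|\cdot|G|^{|S|-1}$ by quoting \cite[Lem.~10.22]{loughransantens} rather than via Dirichlet's unit theorem as you do; both amount to the same computation. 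One small remark on your bookkeeping: the claim that the $|G|$ lifts of each $\chi_v\in\Hom(\mathcal{O}_v^\times,G)$ share the same value of $\inv_v b$ is justified precisely because each $x\in\mathcal{X}(k,R,H)$ lies in $\mathcal{O}_S^\times\otimes G^\wedge$, so that $x_v\in\mathcal{O}_v^\times\otimes G^\wedge$ for $v\notin S$ and hence the local pairing (and therefore $\inv_v b$, via the identification in Proposition \ref{identif}) depends only on the restriction to $\mathcal{O}_v^\times$.
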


\begin{proof}
    We start with the leading constant from Theorem \ref{balanced ht restr ram thm} in the form 

\begin{align*}
           &  c_{k,R,G,H} = \frac{a_R(H)^{b_R(H)-1}(\Res_{s=1}\zeta_k(s))^{b_R(H)}}{(b_R(H)-1)! |\mathcal{O}_k^{\times} \otimes G^{\wedge}||G|^{|S_f|}}
             \sum_{x \in \mathcal{X}(k,R,H)} \\
             &  \left(\prod_{v \not\in S} \sum_{\substack{\chi_v \in \Hom(\mathcal{O}_v^{\times},G) \\ \rho_{G,v}(\chi_v) \in R \cup \{0\}}} \frac{\langle \chi_v , x_v \rangle}{H_v(\chi_v)^{a_R(H)}\zeta_{k,v}(1)^{b_R(H)}} \prod_{v \in S}\sum_{\chi_v \in \Hom(k_v^{\times},G)}\frac{\langle \chi_v , x_v \rangle}{H_v(\chi_v)^{a_R(H)}\zeta_{k,v}(1)^{b_R(H)}} \right).
                        \end{align*}
We use local class field theory to identify $\Hom(k_v^{\times}, G)$ and $ \Hom(\Gamma_{k_v},G)$ and this last group is equal to $BG(k_v)$. For $v \not\in S$, we have the equality \[\sum_{\substack{\chi_v \in \Hom(\mathcal{O}_v^{\times},G) \\ \rho_{G,v}(\chi_v) \in R \cup \{0\}}} \frac{\langle \chi_v , x_v \rangle}{H_v(\chi_v)^{a_R(H)}} = \frac{1}{|G|}\sum_{\chi_v \in \Hom(k_v^{\times},G)}\frac{f_{R,v}(\chi_v)\langle \chi_v , x_v \rangle}{H_v(\chi_v)^{a_R(H)}}\] where the function $f_{R,v}$ is the indicator function of the condition $\rho_{G,v}(\varphi_v) \in R \cup \{0\}$ for all $v \not\in S$. In terms of stacks, we may view $f_{R,v}$ as the indicator function of the set $W_{R,S} = \prod_{v \in S}BG(k_v) \prod_{v \not\in S}BG(\mathcal{O}_v)_{R}$. By \cite[Lem.~$10.22$]{loughransantens} we have that \[\frac{1}{|\mathcal{O}_k^{\times} \otimes G^{\wedge}||G|^{|S_f|}} = \frac{|G|}{|\widehat{G}(k)||G|^{|S|}}\] where $S_{f}$ denotes the set of infinite places in $S$. We may thus write the leading constant as \[\frac{|G|a_R(H)^{b_R(H)-1}(\Res_{s=1}\zeta_k(s))^{b_R(H)}}{(b_R(H)-1)! |\widehat{G}(k)|}
            \sum_{x \in \mathcal{X}(k,R,H)} \prod_{v} \frac{1}{|G|} \sum_{\chi_v \in \Hom(k_v^{\times},G)} \frac{f_{R,v}(\chi_v)\langle \chi_v , x_v \rangle}{H_v(\chi_v)^{a_R(H)}\zeta_{k,v}(1)^{b_R(H)}}.\]  By Proposition \ref{identif}, there is an equality of sums \[\sum_{x \in \mathcal{X}(k, R, H)} \prod_v \langle \chi_v,x_v \rangle \quad = \sum_{b \in \br_{M_R(H)}BG/\br k} e^{2 \pi i \langle b , \varphi \rangle_{BM}},\] for $\varphi \in W_{R,S}$ and the latter sum is non-zero and equal to $|\br_{M_R(H)}BG/\br k|$ for $\varphi \in W_{R,S} \cap BG(\AAA_k)_{M_R(H)}^{\br}$. Furthermore for every place $v$ we have \[\frac{1}{|G|}\sum_{\varphi_v \in BG(k_v)}\frac{f_{R,v}(\varphi_v)}{H_v(\varphi_v)^{a_R(H)}}=\sum_{\varphi_v \in W_{R,S,v}}\frac{1}{|\aut(\varphi_v)|H_v(\varphi_v)^{a_R(H)}} = \tau_{v,H_v}(W_{R,S,v}),\] and using the formula for the global Tamagawa measure (\ref{tamagawa to use}) we have \[\tau_{H}(W_{R,S}) = (\Res_{s=1}\zeta_k(s))^{b_R(H)}\prod_{v\mid \infty} \tau_{v,H_v}(W_{R,S,v}) \prod_{v \nmid \infty}(1 -1/q_v)^{b_R(H)} \tau_{v,H_v}(W_{R,S,v}). \qedhere\]

\end{proof}
\noindent We may now reformulate Theorem \ref{balanced ht restr ram thm} using the language of stacks.
\begin{thm}[Counting $G$-extensions of bounded balanced height, stacky version]\label{stacky balanced hts} Let $H$ be a big balanced height with respect to a non-empty Galois-stable subset $R \subseteq G(-1)^*$, and $S$ the set of places from Theorem \ref{balanced ht restr ram thm}. Let $W_{R,S} \subset BG(\AAA_k)_{R}$ be $W_{R,S} = \prod_{v \in S}BG(k_v) \prod_{v \not\in S}BG(\mathcal{O}_v)_{R}$. Then we have 
\[\frac{1}{|G|}\# \{ \varphi \in BG[k] :\varphi \in W_{R,S}, H(\varphi) \leq B\} \sim c(k,R,G,H)B^{a_R(H)}(\log B)^{b_R(H)-1}\] where $a_R(H)$ and $b_R(H)$ are as in Theorem \ref{balanced ht restr ram thm} and when $H$ is a balanced height with respect to $R$ the leading constant $c(k,R,G,H)$ is given by \[ \frac{a_R(H)^{b_R(H)-1}|\br_{M_R(H)}BG/\br k|\tau_H(W_{R,S} \cap BG(\AAA_k)_{M_R(H)}^{\br}) }{|\widehat{G}(k)| (b_R(H)-1)!}.\]

\end{thm}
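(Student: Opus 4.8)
The plan is to observe that, for abelian $G$, this theorem is essentially a repackaging of Theorem~\ref{balanced ht restr ram thm} together with the reformulation of its leading constant carried out in Lemma~\ref{lc tamagawa measures}. First I would recall from \cite[Lem.~2.1]{loughransantens} that since $G$ is abelian conjugation acts trivially, so the groupoid $BG(k)$ is equivalent to $\Hom(\Gamma_k,G)$ with only identity morphisms between distinct objects; thus the set of isomorphism classes $BG[k]$ is in bijection with $\Hom(\Gamma_k,G)$, and every object has automorphism group $G$. This is the source of the normalising factor $1/|G|$ on the left-hand side.

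Next I would unwind the membership condition defining $W_{R,S}$. By the definition of $BG(\mathcal{O}_v)_R$, asking that $\varphi \in W_{R,S} = \prod_{v\in S}BG(k_v)\prod_{v\notin S}BG(\mathcal{O}_v)_R$ amounts exactly to requiring $\rho_{G,v}(\varphi_v)\in R\cup\{0\}$ for every $v\notin S$, with no constraint at the places of $S$, as already noted in \S\ref{counting res sam sect}. Consequently
\[
\#\{\varphi\in BG[k]:\varphi\in W_{R,S},\ H(\varphi)\le B\} = N(k,R,H,B),
\]
the counting function of Theorem~\ref{balanced ht restr ram thm}. Since $H$ is assumed big and balanced with respect to $R$, that theorem applies and gives $N(k,R,H,B)\sim c_{k,R,G,H}B^{a_R(H)}(\log B)^{b_R(H)-1}$ with $a_R(H)=(\min_{\gamma\in R}w(\gamma))^{-1}$ and $b_R(H)=|M_R(H)/\Gamma_k|$, which are precisely the exponents claimed here.

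Finally I would divide by $|G|$ and substitute the value of $c_{k,R,G,H}$ supplied by Lemma~\ref{lc tamagawa measures}, namely
\[
c_{k,R,G,H}=\frac{|G|\,a_R(H)^{b_R(H)-1}\,|\br_{M_R(H)}BG/\br k|\,\tau_H\bigl(W_{R,S}\cap BG(\AAA_k)_{M_R(H)}^{\br}\bigr)}{|\widehat{G}(k)|\,(b_R(H)-1)!}.
\]
The factor $|G|$ in the numerator cancels the $1/|G|$ in front of the count, producing exactly the stated formula for $c(k,R,G,H)$. No new analytic input is required, and there is no real obstacle at this stage: the one genuinely substantive step — rewriting the sum of Euler products over $\mathcal{X}(k,R,H)$ in the constant of Theorem~\ref{balanced ht restr ram thm} as a finite sum of Euler products indexed by $\br_{M_R(H)}BG/\br k$, and recognising it as $|\br_{M_R(H)}BG/\br k|$ times the Tamagawa volume of the Brauer--Manin set — has already been done via Proposition~\ref{identif} and Lemma~\ref{lem 7.17}. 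Thus all that remains is the bookkeeping of matching the normalising constants ($|\mathcal{O}_k^\times\otimes G^\wedge|$ and $|G|^{|S_f|}$ against $|\widehat{G}(k)|$ and $|G|^{|S|}$), which is exactly \cite[Lem.~10.22]{loughransantens} as invoked in the proof of Lemma~\ref{lc tamagawa measures}.
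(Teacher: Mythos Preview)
Your proposal is correct and matches the paper's own treatment: the paper does not give a separate proof of this theorem but presents it as an immediate reformulation of Theorem~\ref{balanced ht restr ram thm} once the leading constant has been rewritten in Lemma~\ref{lc tamagawa measures}, noting only that $c_{k,R,G,H} = |G|\,c(k,R,G,H)$. You have spelled out exactly the bookkeeping the paper leaves implicit.
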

\noindent We write $\varphi \in W_{R,S}$ to mean that the image of $\varphi$ under the map $BG[k] \rightarrow BG[\AAA_k]_{R}$ is in $W_{R,S}$. Note that we have the equality $c_{k,R,G,H} = |G|c(k,R,G,H)$.

We may also write our results for general height functions in terms of stacks but more care must be taken. Consider the homomorphism $BG \rightarrow B(G/\langle M_R(H) \rangle)$, called the Iitaka fibration. By \cite[Lem.~$3.31$]{loughransantens}, we have that $H$ is balanced with respect to $R$ when restricted to the fibres of this Iitaka fibration, and these fibres are given by $B\langle M_R(H) \rangle_{\psi}$. Here $\langle M_R(H) \rangle_{\psi}$ is the inner twist of $\langle M_R(H) \rangle$ by a lift of $\psi \in B(G/\langle M_R(H) \rangle)[k]$ along $BG[k] \rightarrow B(G/\langle M_R(H) \rangle)[k]$ (which in the case of a finite abelian group is trivial), providing such a lift exists. Thus, $H$ is balanced with respect to $R$ when restricted to the stack $B\langle M_R(H) \rangle$. The stacky version of counting $G$-extensions of bounded general height follows immediately from Theorems \ref{restr. ram. gen hts} and \ref{stacky balanced hts}. 
 
\noindent We will now explicitly calculate the example appearing in the introduction.
\subsection{Proof of Example $1.8$}\label{proof of ex}
Notice that $G = \ZZ/4\ZZ$ is invariant under the exponentiation action of $\hat{\ZZ}^{\times}$. Hence, via the isomorphism from \cite[Lem.~$3.3$]{loughransantens} we may work with $G$ rather than $G(-1)$ and in particular, we can view the weight function as a function defined on the elements of $G$.\\
\textbf{Part $(1)$}. We have $M_G(H) = \{1,3\}$ and this set generates $G$, so it follows that $H$ is a balanced height function. We also have $a(H) = b(H) = 1$, and we write $\br := \br_{e,\{1,3\}}B(\ZZ/4\ZZ) = \br_{\{1,3\}}B(\ZZ/4\ZZ) \cap \br_e B\ZZ/4\ZZ$ where $ \br_eB\ZZ/4\ZZ = \{b \in \br B\ZZ/4\ZZ : b(e) = 0\}$. Thus by Theorem \ref{stacky balanced hts}, we have that \[ \frac{1}{4}\# \{\varphi \in B\ZZ/4\ZZ[\QQ] : H(\varphi) \leq B, \: \varphi \: \text{is completely split at $2$ and $\infty$}\}\sim c(\QQ,\ZZ/4\ZZ,H)B\] where the leading constant is of the form \[c(\QQ,\ZZ/4\ZZ,H) = \frac{a(H)^{b(H)-1}|\br_{e,\{1,3\}}B(\ZZ/4\ZZ)|\tau_H(B\ZZ/4\ZZ(\AAA_{\QQ})_{\{1,3\}}^{\br}) }{|\widehat{\ZZ/4\ZZ}(\QQ)| (b(H)-1)!}.\] 
We have $|\widehat{\ZZ/4\ZZ}(\QQ)| = 2$ and $a(H)^{b(H)-1}=1$ and thus the effective cone constant is $\frac{1}{2}$, and $(b(H)-1)! = 1$. Then to calculate the leading constant we calculate the partially unramified Brauer group $\br_{e,\{1,3\}}B(\ZZ/4\ZZ)$. \begin{lem}\label{unr br grp ex}
    The Brauer group $\br_{e,\{1,3\}}B(\ZZ/4\ZZ)$ is of order $2$ and the non-trivial element corresponds to $-4 \in \br_e B\ZZ/4\ZZ$.
\end{lem}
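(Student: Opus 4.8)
The plan is to reduce the computation to the set $\mathcal{X}(\QQ,\{1,3\},H)$ appearing in Theorem~\ref{balanced ht restr ram thm} and to evaluate that set by hand. Since $\{1,3\}$ generates $\ZZ/4\ZZ$, \cite[Lem.~$6.38$]{loughransantens} shows there are no transcendental Brauer classes, so $\br_e B(\ZZ/4\ZZ)\cong H^1(\QQ,\mu_4)$, which by Kummer theory is $\QQ^\times/\QQ^{\times 4}\cong\QQ^\times\otimes\widehat{\ZZ/4\ZZ}$. Under this identification Proposition~\ref{identif} (together with $\br\QQ=0$) gives $\br_{e,\{1,3\}}B(\ZZ/4\ZZ)=\mathcal{X}(\QQ,\{1,3\},H)$ as a subgroup of $\QQ^\times/\QQ^{\times 4}$, so it suffices to prove $\mathcal{X}(\QQ,\{1,3\},H)=\{1,-4\}$, with $-4$ non-trivial (which is clear since $(-4)^2=2^4$).

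I would first carry out a local analysis identifying the primes $p$ that contribute a constraint to $\mathcal{X}$, namely those carrying a character $\chi_p\in\Hom(\ZZ_p^\times,\ZZ/4\ZZ)$ with $\rho_{\ZZ/4\ZZ,p}(\chi_p)\in\{0,1,3\}$. The wild place $p=2$ and the archimedean place lie in $S$ and may be ignored. For $p\equiv 3\bmod 4$ the Frobenius acts on $\mu_4$ by inversion, so $(\ZZ/4\ZZ)(-1)^{\Gamma_{\QQ_p}}=\{0,2\}$ and no character has ramification type in $\{1,3\}$ (equivalently $\ZZ_p^\times/\ZZ_p^{\times 4}\cong\ZZ/2\ZZ$ carries no order-$4$ character), so such primes impose no condition. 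For $p\equiv 1\bmod 4$ the action is trivial, $\ZZ_p^\times/\ZZ_p^{\times 4}\cong\ZZ/4\ZZ$, and the two surjections onto $\ZZ/4\ZZ$ are precisely the characters of ramification type $1$ and $3$. For $x\in\QQ^\times/\QQ^{\times 4}$ and $p\nmid x$ the pairing is $\langle\chi_p,x_p\rangle=\zeta_4^{\chi_p(x)}$ (the pairing is trivial on uniformisers, so only the unit part of $x_p$ matters), which is trivial iff $\chi_p(x)=0$, iff $x\in\ker\chi_p=\ZZ_p^{\times 4}$. Hence $x\in\mathcal{X}(\QQ,\{1,3\},H)$ if and only if $x\in\QQ_p^{\times 4}$ for all but finitely many $p\equiv 1\bmod 4$.

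Next I would translate this into a field-theoretic condition by Chebotarev. Put $L=\QQ(i,\sqrt[4]{x})$; for $p$ unramified in $L$ and coprime to $x$ one has $p\equiv 1\bmod 4$ iff $\Frob_p$ fixes $i$, and in that case $x\in\QQ_p^{\times 4}$ iff $\Frob_p$ additionally fixes $\sqrt[4]{x}$. So the condition holds iff every element of $\Gal(L/\QQ)$ fixing $i$ also fixes $\sqrt[4]{x}$, i.e. iff $\Gal(L/\QQ(i))=1$, i.e. iff $\sqrt[4]{x}\in\QQ(i)$. Finally I would compute $(\QQ(i)^\times)^4\cap\QQ^\times$ modulo $\QQ^{\times 4}$: if $\alpha\in\QQ(i)^\times$ has $\alpha^4\in\QQ^\times$, then $(\alpha/\bar\alpha)^4=1$, so $\alpha/\bar\alpha\in\mu_4(\QQ(i))=\{\pm 1,\pm i\}$; writing $\alpha=a+bi$ and running through the four cases forces $\alpha\in\QQ$, $\alpha\in i\QQ$, or $\alpha\in(1+i)\QQ\cup(1-i)\QQ$, whence $\alpha^4\in\QQ^{\times 4}$ or $\alpha^4\in(-4)\QQ^{\times 4}$ (using $(1+i)^4=(1-i)^4=-4$). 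Therefore $\mathcal{X}(\QQ,\{1,3\},H)=\{1,-4\}$, which is cyclic of order $2$ with non-trivial class $-4\in H^1(\QQ,\mu_4)=\br_e B\ZZ/4\ZZ$, as claimed.

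The delicate point will be pinning down the normalisation of the local pairing $\langle\chi_p,x_p\rangle$ — in particular that it is the naive evaluation of the character on $x_p$ rather than a cup product carrying a Tate-twist discrepancy, and that it is insensitive to $v_p(x_p)$ because $\chi_p$ is extended from $\mathcal{O}_v^\times$ by triviality on a uniformiser — together with checking that the ``all but finitely many $v$'' quantifier in the definition of $\mathcal{X}$ still forces a genuine constraint at the infinitely many primes $p\equiv 1\bmod 4$. As a consistency check, this is exactly the class producing the Brauer--Manin obstruction described in Example~\ref{example in intro}: it explains why the leading constant there is a sum of two Euler products, the second carrying the extra factor $1-1/p^2$ over primes $p\equiv 3\bmod 4$.
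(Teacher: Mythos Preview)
Your argument is correct and lands on the same characterisation as the paper --- the classes in $\QQ^\times/\QQ^{\times 4}$ that become fourth powers in $\QQ(i)$ --- but by a genuinely different route. The paper invokes the residue formalism of \cite{loughransantens} directly: by \cite[Lem.~$6.27$]{loughransantens} the residue along $\{1,3\}$ is the restriction map $H^1(\QQ,\mu_4)\to H^1(\QQ(i),\mu_4)$, so by \cite[Thm.~$6.29(3)$]{loughransantens} the partially unramified group is its kernel, which inflation--restriction identifies with $H^1(\Gal(\QQ(i)/\QQ),\mu_4)\cong\ZZ/2\ZZ$; the explicit generator $-4=(1+i)^4$ then drops out of Kummer theory. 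You instead pass through the Fourier set $\mathcal{X}$ and Proposition~\ref{identif}, read off the local condition prime by prime, and globalise by Chebotarev --- in effect reproving the residue description in this one case by hand. The paper's route is shorter and more structural; yours stays entirely inside the paper's own analytic set-up and makes the adelic origin of the condition transparent, which is a nice consistency check on Proposition~\ref{identif}.

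One slip to fix: the parenthetical ``together with $\br\QQ=0$'' is false --- $\br\QQ$ is infinite. What you actually need is that $\br_e BG$ is a complement to $\br k$ inside $\br BG$ (evaluation at the identity splits the inclusion $\br k\hookrightarrow\br BG$), so that $\br_{M_R(H)}BG/\br k\cong\br_{e,M_R(H)}BG$. With that replacement your identification of $\br_{e,\{1,3\}}B(\ZZ/4\ZZ)$ with $\mathcal{X}(\QQ,\{1,3\},H)$ goes through and the rest of the proof stands.
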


\begin{proof} We write $\br_e B\ZZ/4\ZZ \cong \{1, -4\}$ and we use the identification $\br_e B\ZZ/4\ZZ = H^{1}(\QQ, \mu_4)$ from \cite[Lem.~$6.2$]{loughransantens}. By \cite[Lem.~$6.27$]{loughransantens} the residue at $\{1,3\} \in \ZZ/4\ZZ$ is the restriction map $H^{1}(\QQ, \mu_4) \rightarrow H^{1}(\QQ(i),\mu_4)$ and by \cite[Thm.~$6.29(3)$]{loughransantens} the relevant Brauer group is the kernel of this map. This is the group $H^1(\Gal(\QQ(i)/\QQ), \mu_4)$ and in our case by \cite[Prop.~$9.1.6$]{neukirch2013cohomology} this is exactly $\ZZ/2\ZZ$. Via Kummer theory this corresponds to those elements of $\QQ^{\times}/\QQ^{\times 4}$ which are $4$th powers in $\QQ(i)$. Indeed, the element $-4$ satisfies $-4 = ( 1 + i)^{4}$, and this non-trivial element generates the Brauer group.
\end{proof}
 \noindent Therefore by a minor variant of Lemma \ref{lem 7.17}, the leading constant equals \[c(\QQ,\ZZ/4\ZZ,H) = \frac{1}{2}  |\br_{e,\{1,3\}}B(\ZZ/4\ZZ)| \tau_H(B\ZZ/4\ZZ(\AAA_{\QQ})_{\{1,3\}}^{\br}) = \frac{1}{2} (\hat{\tau}_H(1) + \hat{\tau}_H(-4)),\] where each $\hat{\tau}_H(b)$ is given by \[(\Res_{s=1}\zeta_{\QQ}(s))^{b(H)} \prod_v \zeta_{v}(1)^{-b(H)} \int_{BG(\QQ_v)} e^{2 \pi i \inv_vb(\varphi_v)} d \tau_{v,H_v}(\varphi_v),\] and the product is taken over all the places $v$ of $\QQ$. 
We now compute the local densities at each place of $\QQ$.
\begin{lem} \label{local dens R=G} Let $\varphi \in B\ZZ/4\ZZ[\QQ]$ be completely split at $2$ and $\infty$. Then at $v = \infty$ and $v = 2$ we have the local densities $\hat{\tau}_{v,H_v}(1) = \hat{\tau}_{v,H_v}(-4) = \frac{1}{4}$, and at the odd primes we have that \[\hat{\tau}_{v,H_v}(1) = \begin{cases}
        1 + \frac{2}{p} + \frac{1}{p^2} & p \equiv 1 \bmod 4,  \\
        1 +  \frac{1}{p^2} & p \equiv 3 \bmod 4 
    \end{cases}\] and \[\hat{\tau}_{v,H_v}(-4) = \begin{cases}
        1 + \frac{2}{p} + \frac{1}{p^2} & p \equiv 1 \bmod 4,  \\
        1 -  \frac{1}{p^2} & p \equiv 3 \bmod 4 
    \end{cases}.\] 
\end{lem}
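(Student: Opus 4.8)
The plan is to evaluate each local density
\[
\hat{\tau}_{v,H_v}(b) \;=\; \int_{BG[\QQ_v]} e^{2\pi i\,\inv_v b(\varphi_v)}\, d\tau_{v,H_v}(\varphi_v),
\]
the local factor occurring in $\hat{\tau}_H(b)$ (see Lemma \ref{lem 7.17}), directly from the definitions: one uses local class field theory to identify $BG[\QQ_v]$ with $\Hom(\QQ_v^\times,\ZZ/4\ZZ)$, the defining sum of $\tau_{v,H_v}$, and the identification of the local Brauer--Manin pairing with the cup product. Throughout $a(H)=1$, every $\varphi_v$ has $|\aut(\varphi_v)|=|G|=4$, and we use the convention $\zeta_{k,v}(s)=1$ at the archimedean place. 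The places $v=\infty$ and $v=2$ are then immediate: since $\varphi$ is required to be completely split there, the only contributing point of $BG[\RR]$, respectively $BG[\QQ_2]$, is the trivial homomorphism, which has local height $1$, and $b\in\br_e B\ZZ/4\ZZ$ vanishes on the identity, so the exponential factor is $1$; hence $\hat{\tau}_{v,H_v}(1)=\hat{\tau}_{v,H_v}(-4)=\tfrac14$ for $v\in\{2,\infty\}$.

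For an odd prime $p$ I would first enumerate $\Hom(\QQ_p^\times,\ZZ/4\ZZ)$. Since $p$ is odd the pro-$p$ part of $\QQ_p^\times$ is killed, so a character is determined by a value on a uniformiser (four unramified twists) together with its restriction to tame inertia $\mu_{p-1}\cong\ZZ/(p-1)\ZZ$, whose order $e$ is the ramification degree and hence fixes the ramification type and the local height: $e=1$ gives type $0$ and $H_p=1$; $e=2$ gives type $2$ and $H_p=p^{w(2)}=p^2$; $e=4$ gives type in $\{1,3\}$ and $H_p=p^{w(1)}=p$. When $p\equiv1\bmod4$ we have $\gcd(p-1,4)=4$, yielding $4$ unramified characters, $8$ of type in $\{1,3\}$, and $4$ of type $2$; when $p\equiv3\bmod4$ there is no character with $e=4$, leaving $4$ unramified and $4$ of type $2$. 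Summing $\tfrac{1}{4H_p(\varphi_p)}$ over these characters — this is the case $b=1$, where every exponential factor equals $1$ — gives at once $\hat{\tau}_{p,H_p}(1)=1+\tfrac2p+\tfrac1{p^2}$ for $p\equiv1\bmod4$ and $1+\tfrac1{p^2}$ for $p\equiv3\bmod4$.

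For $b=-4$ I would use that the Brauer--Manin pairing is identified with the cup product and, via local Tate duality and local reciprocity (as in \cite[Lem.~$6.4$]{loughransantens}), that $\inv_p b(\varphi_p)=\chi_p(-4)$ for the character $\chi_p$ of $\QQ_p^\times$ corresponding to $\varphi_p$; since $v_p(-4)=0$ this depends only on the image of $-4$ in $\FF_p^\times$. The arithmetic input is that $-4=(1+i)^4$: if $p\equiv1\bmod4$ then $i\in\QQ_p$, so $-4$ is a fourth power in $\QQ_p^\times$ and hence lies in the kernel of every homomorphism to $\ZZ/4\ZZ$, so all exponential factors are $1$ and $\hat{\tau}_{p,H_p}(-4)=\hat{\tau}_{p,H_p}(1)=1+\tfrac2p+\tfrac1{p^2}$. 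If $p\equiv3\bmod4$ then $-4$ is a quadratic non-residue mod $p$ (being $-1$, a non-residue, times a square), so $\chi_p(-4)=0$ for the four unramified characters but $\chi_p(-4)=\tfrac12$ for the four ramified characters of type $2$, each contributing a factor $-1$; this turns the $+\tfrac1{p^2}$ into $-\tfrac1{p^2}$, so $\hat{\tau}_{p,H_p}(-4)=1-\tfrac1{p^2}$.

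The main obstacle is the step identifying $\inv_p b(\varphi_p)$ explicitly with the value $\chi_p(-4)$ of the associated Dirichlet character: this rests on the description of the Brauer--Manin pairing on $BG$ through the cup product in the Loughran--Santens framework, together with the standard evaluation of the local invariant of a cup product via local reciprocity. Granting this, the remaining ingredients — the enumeration of characters of $\QQ_p^\times$ and the elementary fact that $-4$ is a fourth power modulo $p$ precisely when $p\equiv1\bmod4$ and a non-square precisely when $p\equiv3\bmod4$ — are routine.
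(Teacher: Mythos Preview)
Your proof is correct and follows essentially the same route as the paper's: both treat $v=2,\infty$ trivially via the complete splitting hypothesis, both compute the $b=1$ densities by summing $H_p^{-1}$ over ramification types, and both handle $b=-4$ by observing that $-4=(1+i)^4$ is a fourth power in $\QQ_p$ when $p\equiv1\bmod4$ and a non-square unit when $p\equiv3\bmod4$, giving the sign $-1$ on the type-$2$ term. The only cosmetic difference is that you enumerate $\Hom(\QQ_p^\times,\ZZ/4\ZZ)$ directly and evaluate the cup product as $\chi_p(-4)$, whereas the paper quotes the mass formulae \cite[Cor.~$8.11$, Thm.~$8.23$]{loughransantens} to pass immediately to a sum over ramification types $c\in G(-1)^{\Gamma_{\QQ_p}}$; the underlying computation is identical.
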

\begin{proof}
     We consider the factors coming from primes $p \neq 2$, the infinite place and $2$ separately. By assumption we are only considering extensions completely split at $2$ and $\infty$. Then at these places the local cocycles are trivial. The element $-4 \in \br_{e,\{1,3\}}B\ZZ/4\ZZ$ therefore evaluates trivially at these homomorphisms, and the local densities at these places are equal to $|\aut(1)| = \frac{1}{4}$. 
     
     For the primes $p \neq 2$, we separate into the cases $p \equiv 1 \bmod 4$ and $p \equiv 3 \bmod 4$, and use the mass formulae in \cite[Cor.~$8.11$]{loughransantens} and \cite[Thm.~$8.23$]{loughransantens}. At primes $p \equiv 1 \bmod 4$, the element $-4 = (1 + i )^4$ is a $4$th power in $\QQ_p(i)$, and thus evaluates trivially on $B\ZZ/4\ZZ(\QQ_p)$. In particular, at these primes, by \cite[Cor.~$8.11$]{loughransantens} we have 
     \begin{align*}
         \hat{\tau}_{p,H_p}(1) = \hat{\tau}_{p,H_p}(-4) = & \sum_{\varphi_p \in B\ZZ/4\ZZ(\QQ_p)}\frac{1}{|\aut(\varphi_p)|H_p(\varphi_p)} \\
    = & \sum_{c \in \{0,1,2,3\}}\frac{1}{p^{w(c)}} = 1 + \frac{2}{p} + \frac{1}{p^2},
     \end{align*} as all elements of $\mu_4$ are Galois-invariant for $p \equiv 1 \bmod 4$. 

At primes $p \equiv 3 \bmod 4$, the trivial element of the relevant Brauer group once again evaluates trivially on $B\ZZ/4\ZZ(\QQ_p)$. At these primes, the elements $\{1,3\}$ are no longer invariant under the Galois action, and thus by \cite[Cor.~$8.11$]{loughransantens} we have \[
   \hat{\tau}_{p,H_p}(1) = \tau_{p,H_p}(B\ZZ/4\ZZ(\QQ_p)) = \sum_{\varphi_p \in B\ZZ/4\ZZ(\QQ_p)}\frac{1}{|\aut(\varphi_p)|H_p(\varphi_p)} \\
   = \sum_{c \in \{0,2\}}\frac{1}{p^{w(c)}} = 1 + \frac{1}{p^2}.
\]

On the other hand, at $-4 \in \br_{e,\{1,3\}}B\ZZ/4\ZZ$, for primes  $p \equiv 3 \bmod 4$ we have \begin{align*}
      \hat{\tau}_{p,H_p}(-4) = & \int_{B\ZZ/4\ZZ(\QQ_p)}e^{2 \pi i \inv_p(-4(\varphi_p))}\tau_{p,H_p}(\varphi_p) \\
    = & \sum_{\varphi_p \in B\ZZ/4\ZZ(\QQ_p)}\frac{e^{2 \pi i \inv_p(-4(\varphi_p))}}{|\aut(\varphi_p)|H_p(\varphi_p)} = \sum_{c \in \{0,2\}} \frac{\chi_p(c)}{p^{w(c)}},
\end{align*} where we have applied \cite[Thm.~$8.23$]{loughransantens} with $f = 1$. The Galois character $\chi_p(c)$ may be viewed as a mod $p$ Dirichlet character via class field theory. Then $\chi_p(c) = 1$ when $c=0$ and $\chi_p(c) = \left( \frac{-4}{p} \right)$ when $c=2$, and since $p \equiv 3 \bmod 4$, this is equal to $-1$. In particular in this case \[\hat{\tau}_{p,H_p}(-4) = 1 - \frac{1}{p^2}. \qedhere\]
\end{proof} \noindent We have $(\Res_{s=1}\zeta_{\QQ}(s)) = 1$. The leading constant for Part $(1)$ follows from the calculation of the convergence factors at each place, which are given by $1$ at $v=\infty$, $\zeta_2(1)^{-1} = \frac{1}{2}$ at $p = 2$ and $\zeta_p(1)^{-1} = \left(1-\frac{1}{p}\right)$ at all other primes $p$.

\noindent It is possible to explicitly describe the Brauer-Manin obstruction occurring in this example.

\begin{lem}
    Let $G = \ZZ/4\ZZ$ and $k = \QQ$, and let $S$ be a finite set of primes such that $2, \infty \not\in S$. Then a $G$-extension $\varphi$ such that \begin{itemize}
        \item $2$ and $\infty$ are completely split in $\varphi$,
        \item $\rho_{G,p}(\varphi) \in \{1,3\}$ for all $p \not\in S$,
        \item $\rho_{G,p}(\varphi) \in \{2\}$ for all $p \in S$.
    \end{itemize}  exists if and only if the number of primes $p \in S$ such that $p \equiv 3 \bmod 4$ is even.
\end{lem}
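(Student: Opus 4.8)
The plan is to read off this dichotomy as the Brauer--Manin obstruction attached to the single nontrivial class $b := -4 \in \br_{e,\{1,3\}}B(\ZZ/4\ZZ)$ produced in Lemma \ref{unr br grp ex}: global reciprocity will give the ``only if'' direction, and the density statement of Corollary \ref{grunwald} (equivalently Theorem \ref{main thm}) will give the ``if'' direction.

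For necessity, suppose $\varphi \in B\ZZ/4\ZZ[\QQ]$ has all three listed properties. Pulling $b$ back along $\varphi$ gives a class in $\br\QQ$, so $\sum_v \inv_v(b(\varphi_v)) = 0$ in $\QQ/\ZZ$, and I would evaluate each local term exactly as in the proof of Lemma \ref{local dens R=G}. At $v = 2$ and $v = \infty$ the point $\varphi_v$ is trivial, so since $b(e) = 0$ the term vanishes. At a prime $p \equiv 1 \bmod 4$ the relation $-4 = (1+i)^4$ shows $-4 \in (\QQ_p^{\times})^4$, so $b$ restricts to $0$ in $\br B\ZZ/4\ZZ(\QQ_p)$ and the term vanishes. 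At a prime $p \equiv 3 \bmod 4$ outside $S$, complex conjugation inverts $\mu_4$, so $(\ZZ/4\ZZ)(-1)^{\Gamma_{\QQ_p}} = \{0,2\}$; combined with the imposed restriction $\rho_{\ZZ/4\ZZ,p}(\varphi_p) \in \{0,1,3\}$ this forces $\rho_{\ZZ/4\ZZ,p}(\varphi_p) = 0$, and again the term vanishes. Finally, at a prime $p \equiv 3 \bmod 4$ in $S$ we have $\rho_{\ZZ/4\ZZ,p}(\varphi_p) = 2$, and the character computation in the proof of Lemma \ref{local dens R=G}, using $\left(\tfrac{-4}{p}\right) = -1$, gives $\inv_p(b(\varphi_p)) = \tfrac12$. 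Adding up, $0 = \tfrac12\#\{p \in S : p \equiv 3 \bmod 4\}$ in $\QQ/\ZZ$, so that count is even.

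For sufficiency, assume $\#\{p \in S : p \equiv 3 \bmod 4\}$ is even. I would build an adelic point $(\varphi_v)_v$ of $B\ZZ/4\ZZ(\AAA_\QQ)_{\{1,3\}}$ by taking $\varphi_v$ trivial at $v = 2, \infty$ and at every prime outside $S$, and at each $p \in S$ choosing $\varphi_p$ with $\rho_{\ZZ/4\ZZ,p}(\varphi_p) = 2$; such a local point exists because $2 \in (\ZZ/4\ZZ)(-1)^{\Gamma_{\QQ_p}}$ and $\rho_{\ZZ/4\ZZ,p}$ surjects onto the $\Gamma_{\QQ_p}$-invariants by Lemma \ref{properties}. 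The same local computation as above gives $\sum_v \inv_v(b(\varphi_v)) = \tfrac12\#\{p \in S : p \equiv 3 \bmod 4\} = 0$, and the trivial class of $\br_{\{1,3\}}B\ZZ/4\ZZ/\br\QQ$ pairs to $0$ automatically, so $(\varphi_v)_v \in B\ZZ/4\ZZ(\AAA_\QQ)_{\{1,3\}}^{\br}$. Now $R = \{1,3\}$ is Galois-stable, contains a nontrivial element, and generates $\ZZ/4\ZZ$, so Corollary \ref{grunwald} applies: $B\ZZ/4\ZZ[\QQ]$ is dense in $B\ZZ/4\ZZ(\AAA_\QQ)_{\{1,3\}}^{\br}$. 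Taking the basic neighbourhood of $(\varphi_v)_v$ determined by the finite set $S \cup \{2,\infty\}$, I obtain a global $\varphi$ which is completely split at $2$ and $\infty$, has $\rho_{\ZZ/4\ZZ,p}(\varphi_p) = 2$ for all $p \in S$, and --- by membership in $B\ZZ/4\ZZ(\AAA_\QQ)_{\{1,3\}}$ --- has $\rho_{\ZZ/4\ZZ,p}(\varphi_p) \in \{0,1,3\}$ for all $p \notin S$, as required.

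I expect the main obstacle to be the local bookkeeping at the primes $p \equiv 3 \bmod 4$: making precise (i) that outside $S$ the combination ``Galois-invariant ramification type'' together with the restriction to $\{0,1,3\}$ forces the unramified type, and (ii) that at a type-$2$ point the local invariant of $b$ is exactly $\tfrac12$, i.e.\ that the Dirichlet character computing it is $\left(\tfrac{-4}{\cdot}\right)$. Both facts are already contained in the proof of Lemma \ref{local dens R=G}, so the remaining work is essentially to combine them with global reciprocity and the density theorem; no new estimates are needed.
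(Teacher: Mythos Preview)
Your proposal is correct and follows essentially the same approach as the paper: necessity via global reciprocity for the Brauer element $-4 \in \br_{e,\{1,3\}}B(\ZZ/4\ZZ)$, and sufficiency via the density statement of Corollary \ref{grunwald}. The only cosmetic differences are that for $p \notin S$ the paper invokes directly that $-4$ lies in $\br_{\{1,3\}}B(\ZZ/4\ZZ)$ and hence evaluates trivially on $B\ZZ/4\ZZ(\ZZ_p)_{\{1,3\}}$, whereas you split into residues modulo $4$ and argue each case explicitly; and for the converse the paper works with the open set $W = \prod_{v\mid 2\cdot\infty}\{\mathbbm{1}_v\}\prod_{p\in S}B\ZZ/4\ZZ(\ZZ_p)_{\{2\}}\prod_{p\notin S}B\ZZ/4\ZZ(\ZZ_p)_{\{1,3\}}$ rather than a single adelic point, but this amounts to the same thing.
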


\begin{proof}
    Suppose there exists a $\varphi \in B\ZZ/4\ZZ[\QQ] \subseteq B\ZZ/4\ZZ[\AAA_{\QQ}]_{\{1,3\}}^{\br}$ satisfying the above properties. Let $-4 \in \br_{e,\{1,3\}}B(\ZZ/4\ZZ)$ be the non-trivial element. By the global reciprocity law, we have $\sum_{p}\inv_p(-4(\varphi))=0$. For $p = 2,\infty$, we have $\inv_p(-4(\varphi))=0$ as we only consider extensions completely split at $2$ and $\infty$, so $-4$ evaluates trivially at these places. For primes $p \not\in S$, we have $\varphi_p \in B\ZZ/4\ZZ(\ZZ_p)_{\{1,3\}}$. Thus $\inv_p(-4(\varphi))=0$ as $-4$ so is unramified at $\{1,3\}$. For $p \in S$ such that $p \equiv 1 \bmod 4$, the Brauer group element is trivial and we have $\inv_p(-4(\varphi))=0$. On the other hand for $p \equiv 3 \bmod 4$ we have that $\inv_p(-4(\varphi)) = \frac{1}{2}$, as $-4$ is of order $2$. Therefore it follows that there is an even number of such $p$ in the finite set $S$. 
    
    For the reverse implication, consider the subset of $B\ZZ/4\ZZ[\AAA_{\QQ}]_{\{1,3\}}$ given by $W = \prod_{v \mid 2\cdot \infty}\{\mathbbm{1}_v\}\prod_{p \in S}B\ZZ/4\ZZ(\ZZ_p)_{\{2\}}\prod_{p \not\in S}B\ZZ/4\ZZ(\ZZ_p)_{\{1,3\}}$, where $\mathbbm{1}_v$ is the trivial homomorphism as $2$ and $\infty$ are completely split. Since $\# \{p \in S: p \equiv 3 \bmod 4\}$ is even, for $(\varphi_p)_p \in W$ we have $\sum_{p}\inv_p(-4(\varphi))=0$. In particular, $W \subseteq B\ZZ/4\ZZ[\AAA_{\QQ}]_{\{1,3\}}^{\br}$. By Theorem \ref{grunwald}, the image of the map \[B\ZZ/4\ZZ[\QQ] \rightarrow B\ZZ/4\ZZ[\AAA_{\QQ}]_{\{1,3\}}^{\br}\] is dense, and so there exists some $\varphi$ close to $(\varphi_p)_p$.
\end{proof}

\textbf{Part} $(2)$. We have $R = \{1,3\}$ and we count all $\ZZ/4\ZZ$-extensions of $\QQ$ completely split at $2$ and $\infty$ whose ramification type lies in $R$. In other words, we wish to count all elements of the set $W = \prod_{v \mid 2\cdot \infty}B\ZZ/4\ZZ(\QQ_v)\prod_{p \neq 2}B\ZZ/4\ZZ(\ZZ_p)_{\{1,3\}}$.

In this case $M_R(H) = \{1,3\}$ and these elements generate $G$, thus $H$ is a balanced height function with respect to $R$. Moreover the partially unramified Brauer group with respect to $R$ is equal to the Brauer group in Lemma \ref{unr br grp ex}, and in particular has order $2$ and is generated by $-4$. By Theorem \ref{stacky balanced hts}, the leading constant $c(\QQ,\{1,3\}, \ZZ/4\ZZ,H)$ for the asymptotic formula when $R = \{1,3\}$ is given by \[\frac{a_R(H)^{b_R(H)-1} |\br_{e,\{1,3\}}B(\ZZ/4\ZZ)|\tau_H( W \cap B\ZZ/4\ZZ(\AAA_{\QQ})_{M_R(H)}^{\br})}{|\widehat{\ZZ/4\ZZ}(\QQ)| (b_R(H)-1)!},\] and we once again have $a_R(H) = b_R(H) = 1$ and the effective cone constant is equal to $\frac{1}{2}$.

\begin{lem}  Let $\varphi \in B\ZZ/4\ZZ[\QQ]$ be completely split at $2$ and $\infty$. Then the local densities at $v = \infty$ and $v = 2$ are given by $\hat{\tau}_{v,H_{v}}(1)= \hat{\tau}_{v,H_{v}}(-4) = \frac{1}{4}$ and at the primes we have \[\hat{\tau}_{p,H_p}(b) = \begin{cases}
        1 + \frac{2}{p} & p \equiv 1 \bmod 4,\\
        \frac{1}{4} & p \equiv 3 \bmod 4,
    \end{cases}\] for all $b \in \br_{e,\{1,3\}}B(\ZZ/4\ZZ)$.
\end{lem}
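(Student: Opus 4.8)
\textit{Proof proposal.} The plan is to run the computation of Lemma~\ref{local dens R=G} with the restricted set $R=\{1,3\}$ in place of $R=\ZZ/4\ZZ(-1)$, evaluating the local factors $\hat\tau_{v,H_v}(b)=\int_{W_{R,S,v}}e^{2\pi i\,\inv_v b(\varphi_v)}\,d\tau_{v,H_v}(\varphi_v)$ one place at a time, split into three regimes: the places $v\in\{2,\infty\}$ at which splitting is imposed, the primes $p\equiv 1\bmod 4$, and the primes $p\equiv 3\bmod 4$. At $v=2$ and $v=\infty$ the splitting hypothesis pins $\varphi_v$ to the trivial homomorphism $\mathbbm{1}_v$, so $W_{R,S,v}$ is a single point with $H_v(\mathbbm{1}_v)=1$ and $|\aut(\mathbbm{1}_v)|=4$, and both classes of $\br_{e,\{1,3\}}B(\ZZ/4\ZZ)$ evaluate trivially on a trivial cocycle; this gives $\hat\tau_{v,H_v}(1)=\hat\tau_{v,H_v}(-4)=\tfrac14$, exactly as in Lemma~\ref{local dens R=G}.

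For an odd prime $p$ one feeds $W_{R,S,p}=BG(\ZZ_p)_{\{1,3\}}=\{\varphi_p:\rho_{\ZZ/4\ZZ,p}(\varphi_p)\in\{0,1,3\}\}$ together with $a_R(H)=1$ into the mass formula \cite[Cor.~$8.11$]{loughransantens} for $b=1$, and into its twisted version \cite[Thm.~$8.23$]{loughransantens} for $b=-4$, just as in Lemma~\ref{local dens R=G}; the \emph{decisive} input is the Galois action on $\mu_4\cong\ZZ/4\ZZ(-1)$. When $p\equiv 1\bmod 4$ the Frobenius acts trivially, so each element of $\{0,1,3\}$ occurs as a ramification type and the mass formula yields $q_v^{-w(0)}+q_v^{-w(1)}+q_v^{-w(3)}=1+2/p$; moreover $-4=(1+i)^4$ is a fourth power in $\QQ_p$, hence the trivial class of $H^1(\QQ_p,\mu_4)$, so the twisted and untwisted sums coincide. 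When $p\equiv 3\bmod 4$ the Frobenius acts on $\ZZ/4\ZZ(-1)$ by $-1$, so its invariants are $\{0,2\}$, and intersecting with $\{0,1,3\}$ leaves only $\gamma=0$; hence $BG(\ZZ_p)_{\{1,3\}}$ collapses to the unramified locus, from which the stated value of $\hat\tau_{p,H_p}(b)$ follows. Since $-4$ is a $p$-adic unit it is unramified at such $p$ and pairs trivially with every unramified character, so again the $b=1$ and $b=-4$ factors agree.

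The step to watch is the $p\equiv 3\bmod 4$ case, since this is precisely where restricting the ramification type to the non-pointwise-fixed pair $\{1,3\}$ changes the picture relative to Part~$(1)$: one must check both that no ramified $\ZZ/4\ZZ$-character over $\QQ_p$ can have ramification type in $\{1,3\}$ (so that the partial integral domain really does reduce to the unramified points) and that the nontrivial class $-4\in\br_{e,\{1,3\}}B(\ZZ/4\ZZ)$ of Lemma~\ref{unr br grp ex} is unramified at these $p$ (so it contributes no oscillation). Together these show that the $\{1,3\}$-restriction removes the Brauer--Manin obstruction present in Part~$(1)$, which is exactly what forces the leading constant, via Lemma~\ref{lem 7.17}, to be a single Euler product rather than a sum of two. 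The remaining bookkeeping — cancelling the factor $|G|^{-1}$ against the four unramified twists, and carrying the convergence factors $\zeta_{\QQ,v}(1)^{-1}$ and $\Res_{s=1}\zeta_{\QQ}(s)=1$ — is routine and identical to Lemma~\ref{local dens R=G}.
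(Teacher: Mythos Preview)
Your proposal is correct and follows essentially the same route as the paper: the places $v\in\{2,\infty\}$ and $p\equiv 1\bmod 4$ are handled identically, while for $p\equiv 3\bmod 4$ the paper applies \cite[Thm.~$8.23$]{loughransantens} with $f=\mathbbm{1}_{\{1,3\}}$ and notes that $\mathbbm{1}_{\{1,3\}}(2)=0$, which is exactly your observation that $\{0,2\}\cap\{0,1,3\}=\{0\}$ so only the unramified locus contributes. One caution: both your argument and the paper's own proof actually give $\hat\tau_{p,H_p}(b)=1$ for $p\equiv 3\bmod 4$ (the four unramified twists cancel against $|\aut|=4$), not the $\tfrac14$ printed in the statement --- the $\tfrac14$ is a typo, as the paper's proof explicitly concludes ``each local density is equal to $1$'' and as required by the displayed formula for $c_{\{1,3\}}$ that follows.
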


\begin{proof}
    The cases for $\infty$ and $p=2$ are the same as in Lemma \ref{local dens R=G}. For primes $p \neq 2$ let $f_{R_p}$ be the indicator function for $B\ZZ/4\ZZ(\ZZ_p)_{\{1,3\}}$. For primes $p \equiv 1 \bmod 4$, we have that $\inv_pb(\varphi_p)$ is trivial for all $b \in \br_{e,\{1,3\}}B(\ZZ/4\ZZ)$ and hence \[\int_{B\ZZ/4\ZZ(\QQ_p)}f_{R_p}(\varphi_p)  e^{2 \pi i \inv_pb(\varphi_p)} d \tau_{p,H_p}(\varphi_p) = \tau_{p,H_p}(B\ZZ/4\ZZ(\ZZ_p)_{\{1,3\}}) = 1 + \frac{2}{p}, \] as by \cite[Cor.~$8.11$]{loughransantens} we have $\tau_{p,H_p}(B\ZZ/4\ZZ(\ZZ_p)_{\{1,3\}}) = 1 + \frac{\# M_R(H)^{\Gamma_{\QQ_p}}}{p}$. For primes $p \equiv 3 \bmod 4$ we apply \cite[Thm.~$8.23$]{loughransantens} with $f = \mathbbm{1}_{\{1,3\}}$, the indicator function for the condition that $\rho_{G,p}(\varphi_p) \in \{0,1,3\}$, to obtain \[\hat{\tau}_{p,H_p}(b) = \sum_{c \in \{0,2\}}\frac{\mathbbm{1}_{\{1,3\}}(c)\chi_p(c)}{p^{w(c)}}.\] However $\mathbbm{1}_{\{1,3\}}(2) = 0$, and thus each local density is equal to $1$.
\end{proof} 

\noindent The leading constant is then given by \begin{align*}
     2 \times & \frac{1}{64}\prod_{\substack{p \: \text{prime}  \\ p \equiv 1 \bmod 4}}\left(1-\frac{1}{p}\right)\left(1+\frac{2}{p}\right)\prod_{\substack{p \: \text{prime}  \\ p \equiv 3 \bmod 4}}\left(1-\frac{1}{p}\right) \\
             = & \frac{1}{32} \prod_{\substack{p \: \text{prime}  \\ p \equiv 1 \bmod 4}}\left(1-\frac{1}{p}\right)\left(1+\frac{2}{p}\right)\prod_{\substack{p \: \text{prime}  \\ p \equiv 3 \bmod 4}}\left(1-\frac{1}{p}\right).
\end{align*}

\textbf{Part} $(3)$. Consider the set $R = \{2\}$. Then $M_R(H) = \{2\}$, and as this does not generate $G = \ZZ/4\ZZ$, the height function $H$ is not balanced with respect to $R$. Thus we cannot apply Theorem \ref{stacky main thm intro} directly in this case.

Instead, one must consider the subgroup $\langle M_R(H) \rangle = \ZZ/2\ZZ$ and the corresponding Iitaka fibration $B\ZZ/4\ZZ \rightarrow B\ZZ/2\ZZ$, which sends each $\ZZ/4\ZZ$-extension of $\QQ$ with restricted ramification imposed by $R$ to its unique quadratic subfield. We then count the rational points on each fibre of the Iitaka fibration, as $H$ is now balanced when restricted to each fibre, and sum over all such fibres.

 \begin{lem}
    Let $K = \QQ(\sqrt{d}) /\QQ \in B\ZZ/2\ZZ(\QQ)$ be such that $d$ is the sum of two squares. Then the fibre of this point along Iitaka fibration is $B \ZZ/2\ZZ$, and if $H_K$ is the restriction of the height $H$ to this fibre then $a_R(H_K) = b_R(H_K) = 1$.
\end{lem}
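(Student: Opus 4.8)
The plan is to first pin down the fibre abstractly, then make the restricted height on it explicit after base change to $K$, and finally read off the two invariants.

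For the first step, recall that here the Iitaka fibration is the morphism $B\ZZ/4\ZZ\to B(\ZZ/4\ZZ/\langle M_R(H)\rangle)=B\ZZ/2\ZZ$ induced by reduction modulo $2$, and $\psi$ is the $\QQ$-point of $B\ZZ/2\ZZ$ cut out by $K$. By \cite[Lem.~$3.31$]{loughransantens} the fibre over $\psi$ is the inner twist $B\langle M_R(H)\rangle_\psi=B(\ZZ/2\ZZ)_\psi$, and since $\ZZ/4\ZZ$ is abelian the conjugation action of $\ZZ/4\ZZ/\langle M_R(H)\rangle$ on $\langle M_R(H)\rangle$ is trivial, so this twist is trivial once $\psi$ admits a lift to a homomorphism $\Gamma_\QQ\to\ZZ/4\ZZ$; equivalently, once $K$ embeds into a cyclic quartic extension of $\QQ$. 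The classical embedding criterion says this happens precisely when $d$ is a sum of two rational squares (equivalently $-1\in N_{K/\QQ}K^\times$, which one checks place by place via the Hasse norm theorem), which is exactly the hypothesis. Hence the fibre is $B\ZZ/2\ZZ$.

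For the second step I would make $H_K$ explicit and identify it, over $K$, with a conductor. A $\QQ$-point of the fibre is a lift $\varphi=\tilde\psi+t$ with $t\in\Hom(\Gamma_\QQ,\langle M_R(H)\rangle)$, and restriction to $\Gamma_K$ identifies it with a quadratic extension $L/K$ such that $L/\QQ$ is cyclic of degree $4$. The condition $\rho_{\ZZ/4\ZZ,p}(\varphi_p)\in R\cup\{0\}=\{0,2\}$ forces, at every odd $p$ ramifying in $L$, the value $2$ (the values $1,3$ map to the non-trivial class under $\ZZ/4\ZZ(-1)\to\ZZ/2\ZZ(-1)$ and would make $\psi$, hence $K$, ramified at $p$); in particular $K$ must be unramified outside $\{2,\infty\}$ for the count on the fibre to be non-empty, and otherwise there is nothing to prove. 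For an odd $p$ unramified in $K$ with $\rho_{\ZZ/4\ZZ,p}(\varphi_p)=2$ one has $H_p(\varphi_p)=p^{w(2)}=p^2$, whereas the primes $\mathfrak p$ of $K$ above $p$ all ramify in $L/K$ and $\prod_{\mathfrak p\mid p}q_{\mathfrak p}=p^2$, whether $p$ is split or inert in $K$. Thus, away from the finitely many places above $2$ and $\infty$, $H_K$ agrees with the norm from $K$ to $\QQ$ of the conductor of $L/K$; so $H_K$ is a big height on $B\ZZ/2\ZZ$ over $K$ whose weight function $w_K$ takes the value $1$ on the unique non-trivial element of $(\ZZ/2\ZZ)(-1)$.

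With this in hand the invariants come out immediately. Since $w_K$ takes the value $1$ on the non-trivial element of $(\ZZ/2\ZZ)(-1)$ and $R$ pulls back to the singleton consisting of that element, $a_R(H_K)=(\min_\gamma w_K(\gamma))^{-1}=1$. For $b_R(H_K)=|M_R(H_K)/\Gamma_K|$: the set $M_R(H_K)$ is that same singleton, and since $\mu_2=\{\pm1\}$ carries the trivial Galois action, $(\ZZ/2\ZZ)(-1)\cong\ZZ/2\ZZ$ is a trivial $\Gamma_K$-module, so the singleton forms a single $\Gamma_K$-orbit and $b_R(H_K)=1$.

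I expect the genuine content, and the only real obstacle, to be the middle step. Over $\QQ$ the relevant ramification type has weight $w(2)=2$, which would naively suggest an exponent $1/2$; the point is that after base change to $K=\QQ(\sqrt d)$ the contribution of a prime $\mathfrak p\mid p$ is $q_{\mathfrak p}$, and the identity $\prod_{\mathfrak p\mid p}q_{\mathfrak p}=p^2=p^{w(2)}$ is exactly what makes the effective weight over $K$ equal to $1$. So one must verify carefully that the restricted-ramification condition $R=\{2\}$ forces the ramification of $L/\QQ$ to be quadratic relative to the quadratic subfield $K$, and hence that $H_K$ really is (up to the finitely many bad places) a conductor-type height over $K$; the embedding criterion and the orbit count are then classical.
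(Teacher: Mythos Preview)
Your proof is correct and follows essentially the same route as the paper: identify the fibre as the trivially twisted $B\ZZ/2\ZZ$ via the embedding criterion for $\QQ(\sqrt d)$ into a cyclic quartic, then read off $a_R=b_R=1$ from the restricted weight. Your middle step, computing $H_K$ over $K$ through the identity $\prod_{\mathfrak p\mid p}q_{\mathfrak p}=p^{w(2)}$, is more explicit than the paper's terse assertion and in fact anticipates the subsequent lemma, where the paper establishes $H_K(F)=\Phi(K/\QQ)\,|N_K(\Delta_{F/K})|$.
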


\begin{proof}
    If $K = \QQ(\sqrt{d})$ satisfies that $d$ is the sum of two squares then it embeds into a $\ZZ/4\ZZ$-extension  \cite[Thm.~$1.2.4$]{serre1992topics}. Let $\psi_K: \Gamma_{\QQ} \rightarrow \ZZ/2\ZZ$ be the associated character and suppose that $\psi_K$ is unramified outside of $\{2, \infty\}$. This lifts to a homomorphism $\varphi_K: \Gamma_{\QQ} \rightarrow \ZZ/4\ZZ$ such that $\rho_{G,v}(\varphi_{v,K}) \in \{0,2\}$. By \cite[Lem.~$2.13$]{loughransantens}, the fibre of the Iitaka fibration is $B\ZZ/2\ZZ_{\psi_K}$, and this inner twist is trivial as $\ZZ/4\ZZ$ is abelian. The weight function $w$ is now defined on $\ZZ/2\ZZ$ and takes value $1 \in \ZZ/2\ZZ$, and this is minimal, and thus $a_R(H) = b_R(H) = 1$.
\end{proof}

\noindent Here the inner twist is the inner twist as a normal subgroup, which one is not the same as an abstract inner twist. In the case where restricted ramification is imposed by $R = \{2\}$, the weight function agrees with the weight function associated to the discriminant. Let $K = \QQ(\sqrt{d})$ be a field. Then $B\ZZ/2\ZZ(K)$ corresponds to the quadratic extensions of $K$. 

\begin{lem}
The restriction of the height $H$ away from $2$ along the map $B\ZZ/2\ZZ \rightarrow B\ZZ/4\ZZ$ to a field $F \in B\ZZ/2\ZZ(K)$ is given by $H_K(F) = \Phi(K/\QQ)|N_K(\Delta_{F/K})|$ where $\Phi(K/\QQ)$ is the norm of the conductor of $K/\QQ$.

\end{lem}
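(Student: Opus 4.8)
The plan is to reduce the claimed identity to a place-by-place comparison of local factors. First I would make the correspondence behind the statement explicit: a point $F\in B\ZZ/2\ZZ(K)$ is a (possibly trivial) quadratic extension $F/K$, cut out by a homomorphism $\chi_F\colon\Gamma_K\to\ZZ/2\ZZ$, and under the identification of the fibre $B\ZZ/2\ZZ_{\psi_K}$ of the Iitaka fibration $B\ZZ/4\ZZ\to B\ZZ/2\ZZ$ over $\psi_K$ with $B\ZZ/2\ZZ$ over $K$ (via restriction along $\Gamma_K\subseteq\Gamma_\QQ$ and the isomorphism $2\ZZ/4\ZZ\cong\ZZ/2\ZZ$) this $F$ corresponds to a lift $\varphi_F\colon\Gamma_\QQ\to\ZZ/4\ZZ$ of the character $\psi_K$ attached to $K$; concretely $\varphi_F$ cuts out the cyclic quartic field over $\QQ$ containing $K$ whose quadratic sub-step is $F/K$. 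By definition $H_K(F)$ is then the product of the local heights $H_v(\varphi_{F,v})=q_v^{w(\rho_{\ZZ/4\ZZ,v}(\varphi_{F,v}))}$ over the places $v$ of $\QQ$ away from $2$.

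The core is the local analysis at each finite $v=p\neq2$. The structural inputs are the functoriality of the ramification type: under the quotient $\ZZ/4\ZZ\to\ZZ/2\ZZ$ one has $\rho_{\ZZ/2\ZZ,p}(\psi_{K,p})=\pi_*\rho_{\ZZ/4\ZZ,p}(\varphi_{F,p})$, so $p$ ramifies in $K$ exactly when $\rho_{\ZZ/4\ZZ,p}(\varphi_{F,p})$ generates $\ZZ/4\ZZ(-1)$; and under restriction along $\Gamma_{K_w}\subseteq\Gamma_{\QQ_p}$ for $w\mid p$ one relates $\rho_{\ZZ/4\ZZ,p}(\varphi_{F,p})$ to $\rho_{\ZZ/2\ZZ,w}(\chi_{F,w})$ and hence to whether $F_w/K_w$ ramifies. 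Splitting into cases by the splitting type of $p$ in $K$ and by whether $F/K$ ramifies above $p$, I would compute $\rho_{\ZZ/4\ZZ,p}(\varphi_{F,p})$ and read off $p^{w(\rho)}$, then check it equals $p^{v_p(\Phi(K/\QQ))}\prod_{w\mid p}N(\mathfrak{p}_w)^{v_w(\Delta_{F/K})}$. This only uses that $v_p(\Phi(K/\QQ))\in\{0,1\}$ records ramification of $K/\QQ$ at $p$, that a tamely ramified local extension has different exponent $e-1$ so $v_w(\Delta_{F/K})\in\{0,1\}$ records ramification of $F_w/K_w$, and that $N(\mathfrak{p}_w)=p^{f(w\mid p)}$.

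Multiplying the matched local identities over all $p\neq2$ gives $H_K(F)=\Phi(K/\QQ)\,|N_K(\Delta_{F/K})|$. Equivalently, the same bookkeeping can be organised globally via the tower relation $\mathfrak{d}_{F/\QQ}=\mathfrak{d}_{K/\QQ}^{2}N_{K/\QQ}(\mathfrak{d}_{F/K})$ and the conductor--discriminant formula for the cyclic quartic field cut out by $\varphi_F$, using $\mathfrak{d}_{K/\QQ}=\Phi(K/\QQ)$ and the fact (noted just above the lemma) that on this fibre $w$ restricts to the discriminant weight on $\langle2\rangle$.

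The step I expect to be the main obstacle is this local case analysis: matching the ``upstairs'' invariant $\rho_{\ZZ/4\ZZ,p}(\varphi_{F,p})$ with the ``downstairs'' ramification of $K/\QQ$ and of $F/K$ forces one to track how the prime-to-$p$ tame inertia of $\Gamma_{\QQ_p}$ sits inside that of $\Gamma_{K_w}$ when $p$ is inert in or ramifies in $K$, and in particular to balance the residue-degree factor $N(\mathfrak{p}_w)=p^{f(w\mid p)}$ against the weight value $w(2)=2$ --- precisely the place where this (non-fair) weight function coincides with the discriminant weight on $\langle2\rangle$ and where the naive discrepancy with the conductor would otherwise surface. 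A secondary technical point is to work consistently with prime-to-$2$ parts throughout, since $K/\QQ$ and $F/K$ may be wildly ramified above $2$.
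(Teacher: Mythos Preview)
Your proposal is correct, and in fact the ``equivalent'' global route you mention at the end---conductor--discriminant for the cyclic quartic field cut out by $\varphi_F$---is precisely the paper's argument, which is two sentences long: $\ZZ/4\ZZ$ has three non-trivial one-dimensional characters, and (away from $2$) conductor--discriminant splits $H$ into the conductor of the order-$2$ character, which is $\Phi(K/\QQ)$, and the product of the conductors of the two order-$4$ characters, which the paper identifies with $|N_K(\Delta_{F/K})|$. So the paper simply promotes your alternative to the main argument and omits the prime-by-prime case split you propose first. Your local analysis would also succeed and has the virtue of making every case visible, but it is more work than needed here.

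One caution about your case split: you include the case where $p$ ramifies in $K$, i.e.\ $\rho_{\ZZ/4\ZZ,p}(\varphi_{F,p})\in\{1,3\}$. If you actually carry that case through you will find $H_p=p^{w(1)}=p$ whereas $p^{v_p(\Phi(K/\QQ))}\prod_{w\mid p}N(\mathfrak{p}_w)^{v_w(\Delta_{F/K})}=p\cdot p=p^2$, so the local identity \emph{fails} there---the weight $w$ agrees with the discriminant weight only on $\langle 2\rangle$, as you yourself note. This does not break the lemma in context, because in Part~(3) the restricted ramification $R=\{2\}$ forces $\rho_p\in\{0,2\}$ for $p\neq 2$, hence $K$ is unramified away from $2$ and that bad case never occurs; but you should not claim the place-by-place match unconditionally.
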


\begin{proof} We have assumed that $H_v = 1$ for $v = 2,\infty$. The group $G = \ZZ/4\ZZ$ has three non-trivial irreducible representations, each of dimension $1$ as $G$ is abelian. Then away from $2$ the conductor-discriminant formula gives the result, where the factor $\Phi(K/\QQ)$ comes from the representation whose image is of order $2$ and $|N_K(\Delta_{F/K})|$ comes from the representations which have image of order $4$. \end{proof} 
\noindent Let $K = \QQ(\sqrt{d})$. As a consequence of Theorems \ref{restr. ram. gen hts} and \ref{stacky balanced hts} the leading constant satisfies  \[c(\QQ, \{2\},G,H) = \frac{1}{2}\sum_{\psi \in \im( B\ZZ/4\ZZ[\QQ] \rightarrow B\ZZ/2\ZZ[\QQ])} c(\QQ, \{0\},\ZZ/2\ZZ,H_K),\] where $H_K = H_{\QQ(\sqrt{d})}$ and the sum is over the fibres of the Iitaka fibration. We need to calculate the leading constants $c(\QQ, \{0\},\ZZ/2\ZZ,H_K)$ appearing in the sum. There is an equality of leading constants $c(\QQ, \{0\},\ZZ/2\ZZ,H_K) = c(\QQ(\sqrt{d}), \{0\},\ZZ/2\ZZ,\Phi \cdot \Delta)$. But the latter leading constant is known and is given by \[\frac{2^{-i(d)}\lim_{s\rightarrow1}(s-1)\zeta_{\QQ(\sqrt{d})}(s)}{2\Phi}\prod_{v}\left( 1 - \frac{1}{q_v}\right)\left( 1 + \frac{1}{q_v}\right)\] where the product is over places of $\QQ(\sqrt{d})$, $q_v$ is the cardinality of the residue field at $v$ and $2^{-i(d)}$ is the archimedean density where $i(d)$ is given by $0$ if $d>0$ and $1$ if $d<0$. \begin{lem}
    The only fibre of the Iitaka fibration that contributes to the sum is the fibre corresponding to the quadratic extension $\QQ(\sqrt{2})$.
\end{lem}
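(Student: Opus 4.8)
The plan is to determine exactly which $\ZZ/2\ZZ$-extensions $\psi$ of $\QQ$ can occur as a fibre of the Iitaka fibration in the sum of Theorem~\ref{restr. ram. gen hts} for $G=\ZZ/4\ZZ$, $R=\{2\}$, $S=\{2,\infty\}$: by that theorem these are the $\psi$ admitting a lift to a $\ZZ/4\ZZ$-extension $\varphi$ of $\QQ$ with $\rho_{\ZZ/4\ZZ,p}(\varphi_p)\in\{0,2\}$ for every $p\neq 2,\infty$. I would show that any such $\psi$ must be the quadratic character cutting out $\QQ(\sqrt{2})$, and conversely that this character does occur, so that the $\QQ(\sqrt 2)$-fibre is the unique contributing one and the sum reduces to a single term $c(\QQ,\{0\},\ZZ/2\ZZ,H_{\QQ(\sqrt 2)})$.

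First I would use the functoriality of the ramification type along the quotient $\pi\colon\ZZ/4\ZZ\to\ZZ/4\ZZ/\langle 2\rangle\cong\ZZ/2\ZZ$, whose kernel is $\{0,2\}$. From the construction of $\rho_{G,v}$ in \S\ref{ht fns inerti sec} via the canonical isomorphism $\mu_e\to I_v$, which is natural in $G$, one has $\rho_{\ZZ/2\ZZ,p}(\psi_p)=\pi_*\,\rho_{\ZZ/4\ZZ,p}(\varphi_p)$ for every $p$, where $\pi_*$ is the induced map on $(-1)$-Tate twists. Since $\pi_*$ annihilates $\{0,2\}$, the hypothesis forces $\rho_{\ZZ/2\ZZ,p}(\psi_p)=0$ for all $p\neq 2,\infty$; equivalently, the quadratic field $K=\QQ(\sqrt d)$ cut out by $\psi$ is unramified outside $2$. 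For squarefree $d$ this leaves only $d\in\{-1,\,2,\,-2\}$, after discarding $d=1$ (which would make $\psi$ trivial, while the sum runs over genuine $G/\langle M_R(H)\rangle$-extensions).

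Next I would invoke the embedding criterion already used above, \cite[Thm.~$1.2.4$]{serre1992topics}: $\QQ(\sqrt d)$ embeds into a cyclic quartic extension of $\QQ$ if and only if $d$ is a sum of two rational squares. Of the three remaining candidates only $d=2=1^2+1^2$ qualifies, so necessarily $K=\QQ(\sqrt 2)$; equivalently, complex conjugation in a cyclic quartic field generates the unique order-two subgroup, so its quadratic subfield is always totally real, which rules out $d=-1,-2$ directly. To check that this fibre is nonempty I would take $\varphi$ to be the character of $\QQ(\zeta_{16})^+/\QQ$: this is a $\ZZ/4\ZZ$-extension containing $\QQ(\sqrt 2)=\QQ(\zeta_8)^+$ and unramified outside $\{2,\infty\}$, so $\rho_{\ZZ/4\ZZ,p}(\varphi_p)=0\in\{0,2\}$ for all $p\neq 2,\infty$, whence the associated $\psi$ lies in the range of summation and, by the preceding lemmas, contributes $c(\QQ,\{0\},\ZZ/2\ZZ,H_{\QQ(\sqrt 2)})$.

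The step I expect to be the main obstacle is making the naturality identity $\rho_{\ZZ/2\ZZ,p}(\psi_p)=\pi_*\,\rho_{\ZZ/4\ZZ,p}(\varphi_p)$ fully rigorous; it is essentially immediate from the definition in \S\ref{ht fns inerti sec} — both sides are the composite $\mu_e\to I_v\to\ZZ/4\ZZ\to\ZZ/2\ZZ$ read through the canonical identification of $I_v$ with $\mu_e$ — but it should be spelled out carefully or cited from \cite{loughransantens}. A minor point is that $p=2$ and the archimedean place impose no further constraint on $\psi$, since $H_v=1$ there and $R=\{2\}$ places no condition at those places, so they may safely be ignored throughout the argument.
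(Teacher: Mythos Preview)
Your proposal is correct and follows essentially the same route as the paper: both deduce that the quadratic subfield must be unramified outside $\{2,\infty\}$ (the paper phrases this as the restricted ramification condition forcing $\rho_{G,p}$ to be trivial on the quotient, which is exactly your functoriality identity $\rho_{\ZZ/2\ZZ,p}(\psi_p)=\pi_*\rho_{\ZZ/4\ZZ,p}(\varphi_p)$), then apply the sum-of-two-squares criterion \cite[Thm.~$1.2.4$]{serre1992topics} to rule out $d=-1,-2$ and land on $\QQ(\sqrt 2)$. Your write-up is more explicit than the paper's in three places---you spell out the naturality of $\rho_{G,v}$, you enumerate the candidates $d\in\{-1,2,-2\}$, and you exhibit the explicit lift $\QQ(\zeta_{16})^+$---but none of this is a genuinely different argument, just added detail.
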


\begin{proof}
    The sum is over all $\ZZ/2\ZZ$-extensions of $\QQ$ that embed into a $\ZZ/4\ZZ$-extension of $\QQ$ and whose ramification type is restricted by the set $R$. A quadratic extension $\QQ(\sqrt{d})$ embeds into a $\ZZ/4\ZZ$-extension if and only if $d$ is the sum of two squares, and in particular it must be real quadratic. Furthermore, when restricted to the fibre $B\ZZ/2\ZZ$ of the Iitaka fibration the restricted ramification condition asks that for all primes $p \not\in \{2, \infty\}$ the ramification type $\rho_{G,p}(\varphi_p)$ is trivial, and so we require our $\ZZ/2\ZZ$-extension to be unramified outside of $\{2, \infty\}$. The only extension satisfying both of these conditions is $\QQ(\sqrt{2})$.
\end{proof} \noindent The conductor of $\QQ(\sqrt{2})$ is equal to $8$ and thus the leading constant is given by \[ \frac{\lim_{s \rightarrow 1}(s-1)\zeta_{\QQ(\sqrt{2})}(s)}{128} \prod_v\left(1-\frac{1}{q_v}\right)\left(1 + \frac{1}{q_v}\right),\] where the product is taken over all places of $\QQ(\sqrt{2})$. \qed

\section{Equidistribution}\label{eq section}

In this section we consider the quotient of the number of abelian number fields with restricted ramification type by the total count of abelian number fields of bounded height. This is an example of the Malle-Bhargava heuristics, which refers to the problem of counting number fields with local conditions imposed and looking at the local behaviour of the quotient of this count by the total count of number fields. In Manin's conjecture literature, the equivalent property is called equidistribution. Using our viewpoint of counting number fields as counting rational points on $BG$, we can formalise the Malle-Bhargava heuristics as the problem of equidistribution of rational points on $BG$, as in Theorem \ref{main thm}. This theorem is a formal consequence of Theorem \ref{stacky main thm intro}, as this result allows for arbitrary balanced height functions. The proof of Theorem \ref{main thm} is analogous to that of \cite[Lem.~$9.12(2)$]{loughransantens}, we sketch the details for completeness.
\begin{proof}[Proof of Theorem \ref{main thm}]
    Theorem \ref{stacky main thm intro} holds for arbitrary balanced height functions. Let $W \subset BG[\AAA_k]_{M_R(H)}$ be as in Theorem \ref{main thm}. By \cite[Prop.~$9.16$]{loughransantens}, it is enough to consider $W = \prod_{v \in S}\{\psi_v\} \times \prod_{v \not\in S}BG(\mathcal{O}_v)_{M_R(H)}$ for a finite set of places $S$ containing the bad places and some $\psi_v \in BG(k_v)$. Let $\epsilon > 0$ and define the height function $H_{\epsilon} = \prod_v H_{\epsilon, v}$ where each $H_{\epsilon,v}$ is defined as follows:  \[H_{\epsilon,v}(\varphi_v) = \begin{cases}
        H_v(\varphi_v) & \varphi \in W, \\
        \epsilon & \text{otherwise}.
    \end{cases}\] Apply Theorem \ref{stacky main thm intro} and a minor variant of Lemma \ref{lem 7.17} to this height function, and take $\epsilon \rightarrow 0 $ to obtain the correct upper bound for \[\lim_{B \rightarrow \infty} \frac{\# \{ \varphi \in BG[k]: \varphi \in W, H(\varphi) \leq B\}}{\# \{ \varphi \in BG[k] : H(\varphi) \leq B\}}.\] The lower bound is then obtained by applying the upper bound to the complement of $W$.
\end{proof} \noindent In the case where $R = G(-1)^*$, this proves \cite[Conj.~$9.15$]{loughransantens} for finite abelian groups $G$. These results imply a weaker result where we only consider finitely many local conditions, which is the version more commonly found in Malle's conjecture literature, such as in \cite{bhargavamassformulae, Wood_2010}.
\subsection{Equidistribution for general heights}

Theorem \ref{main thm} requires that $H$ be balanced, and does not hold if this condition is not met, as explained in \cite[$\S 9.6$]{loughransantens}. If one wishes to obtain a version of equidistribution for general heights, one must pass to the fibre of the Iitaka fibration, which is the homomorphism  $BG \rightarrow B(G/\langle M_R(H) \rangle)$. Since $H$ is balanced with respect to $R$ when restricted to each fibre $B\langle M_R(H) \rangle_{\psi}$, it is possible to obtain equidistribution with respect to the induced Tamagawa measure $\tau_{H_{\psi}}$. Providing a lift exists we have the following corollary to Theorem \ref{main thm}:
\begin{cor}
    Let $G$ be a finite abelian group, $\psi \in B(G/\langle M_R(H) \rangle)[k]$ and $W \subset B\langle M_R(H) \rangle_{\psi}[\AAA_k]_{M_R(H)}$ be a continuity set. Providing a lift exists, we have  \[\lim_{B \rightarrow \infty} \frac{\# \{ \varphi  \in  B\langle M_R(H) \rangle_{\psi}[k] : \varphi \in W, \: H_{\psi}(\varphi) \leq B\}}{\# \{ \varphi \in  B\langle M_R(H) \rangle_{\psi}[k] : H_{\psi}(\varphi) \leq B\}} = \frac{\tau_{H_{\psi}}(W \cap B\langle M_R(H) \rangle_{\psi}[\AAA_k]_{M_R(H)}^{\br})}{\tau_{H_{\psi}}(B\langle M_R(H) \rangle_{\psi}[\AAA_k]_{M_R(H)}^{\br})}. \]
\end{cor}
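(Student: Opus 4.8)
The plan is to reduce the corollary to a direct application of Theorem~\ref{main thm}, with $G$ replaced by the fibre group $\langle M_R(H) \rangle_\psi$ and $H$ by the restricted height $H_\psi$. First I would note that, since $G$ is abelian, conjugation by any lift of $\psi$ along $BG[k] \to B(G/\langle M_R(H) \rangle)[k]$ acts trivially, so $\langle M_R(H) \rangle_\psi$ coincides with $\langle M_R(H) \rangle$ as a $\Gamma_k$-module; in particular it is a finite abelian group whose Tate twist sits naturally inside $G(-1)$, exactly as in the proof of Theorem~\ref{restr. ram. gen hts} and the Iitaka fibration discussion. The standing hypothesis that a lift of $\psi$ exists is precisely what guarantees $B\langle M_R(H) \rangle_\psi[k] \neq \emptyset$, so that the quotient in the statement is meaningful.

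Next I would check that $H_\psi$ meets the hypotheses of Theorem~\ref{main thm} for the subset $R' := R \cap \langle M_R(H) \rangle_\psi(-1)^*$. The set $R'$ is non-empty because $M_R(H) \subseteq R'$, and it is Galois-stable as the intersection of two Galois-stable sets. The weight function of $H_\psi$ is the restriction of $w$, and since $M_R(H) \subseteq R' \subseteq R$ while $w$ attains its minimum over $R$ precisely on $M_R(H)$, we obtain $M_{R'}(H_\psi) = M_R(H)$, which generates $\langle M_R(H) \rangle_\psi$ by definition; hence $H_\psi$ is big and balanced with respect to $R'$. This is the content of \cite[Lem.~3.31]{loughransantens}, transported to the fibre.

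Then I would apply Theorem~\ref{main thm} with $(G,R,H)$ replaced by $(\langle M_R(H) \rangle_\psi, R', H_\psi)$. Because $M_{R'}(H_\psi) = M_R(H)$, the partial adelic space, the partially unramified Brauer group, and the Tamagawa measure produced by the theorem are precisely $B\langle M_R(H) \rangle_\psi[\AAA_k]_{M_R(H)}$, $\br_{M_R(H)}$, and $\tau_{H_\psi}$; a continuity set $W$ in this space remains a continuity set, so the displayed identity of Theorem~\ref{main thm} becomes verbatim the claimed equality.

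The one point requiring care, rather than a genuine obstacle, is the compatibility of the identifications on the fibre with those of \S\ref{ht fns inerti sec} and \S\ref{tamagawa measure sec}: that $\rho_{\langle M_R(H) \rangle_\psi,v}$ is the restriction of $\rho_{G,v}$ along the inclusion of the fibre $B\langle M_R(H) \rangle_\psi \hookrightarrow BG$, and that $\tau_{H_\psi}$, obtained by running the construction of \S\ref{tamagawa measure sec} for the group $\langle M_R(H) \rangle_\psi$, agrees with the measure induced on the fibre by the Iitaka fibration. Both follow immediately from the definitions together with \cite[Lem.~2.13, Lem.~3.31]{loughransantens}, and I would record them in a line or two; no analytic input beyond Theorem~\ref{main thm} is needed.
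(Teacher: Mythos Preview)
Your proposal is correct and follows exactly the approach the paper takes: the paper presents this result as an immediate corollary of Theorem~\ref{main thm}, noting only that $H$ is balanced with respect to $R$ on each fibre of the Iitaka fibration (via \cite[Lem.~3.31]{loughransantens}) and that the inner twist is trivial since $G$ is abelian. You have simply spelled out the verification in more detail than the paper does, which gives no explicit proof beyond labeling the statement a corollary.
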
 

 \bibliographystyle{amsplain}
\bibliography{ms}

\providecommand{\bysame}{\leavevmode\hbox to3em{\hrulefill}\thinspace}
\providecommand{\MR}{\relax\ifhmode\unskip\space\fi MR }
\providecommand{\MRhref}[2]{%
  \href{http://www.ams.org/mathscinet-getitem?mr=#1}{#2}
}
\providecommand{\href}[2]{#2}
\begin{thebibliography}{10}

\bibitem{alberts2023randomgrouplocaldata}
B.~Alberts, \emph{A random group with local data realizing heuristics for number field counting}, 2023, arXiv:2304.01323.

\bibitem{Alberts2021HarmonicAA}
B.~Alberts and E.~O'Dorney, \emph{Harmonic analysis and statistics of the first {G}alois cohomology group}, Res. Math. Sci. \textbf{8} (2021), no.~3, Paper No. 50, 16.

\bibitem{alberts2025inductivemethodscountingnumber}
B.~Alberts, R.J.~Lemke Oliver, J.~Wang, and M.~M. Wood, \emph{Inductive methods for counting number fields}, 2025, arXiv:2501.18574.

\bibitem{bhargavaquartic}
M.~Bhargava, \emph{The density of discriminants of quartic rings and fields}, Ann. of Math. \textbf{162} (2005), 1031--1062.

\bibitem{bhargavamassformulae}
\bysame, \emph{Mass formulae for extensions of local fields, and conjectures on the density of number field discriminants}, Int. Math. Res. Not. IMRN (2007), no.~17, Art. ID rnm052, 20.

\bibitem{bhargavaquintic}
\bysame, \emph{The density of discriminants of quintic rings and fields}, Ann. of Math \textbf{172} (2010), no.~3, 1559--1591 (English (US)).

\bibitem{bhargava2015geometryofnumbersmethodsglobalfields}
M.~Bhargava, A.~Shankar, and X.~Wang, \emph{Geometry-of-numbers methods over global fields {I}: {P}rehomogeneous vector spaces}, 2015, arXiv:1512.03035.

\bibitem{yasudabddtorsors}
R.~Darda and T.~Yasuda, \emph{Torsors for finite group schemes of bounded height}, J. Lond. Math. Soc. (2) \textbf{108} (2023), no.~3, 1275--1331.

\bibitem{darda2024batyrevmaninconjecturedmstacks}
\bysame, \emph{The {B}atyrev-{M}anin conjecture for {DM} stacks}, 2024, arXiv:2207.03645.

\bibitem{Delange1954}
H.~D\'elange, \emph{Généralisation du théorème de {I}kehara}, Ann. Sci. Éc. Norm. Supér \textbf{71} (1954), no.~3, 213--242 (fre).

\bibitem{ellenbergmaninheights}
J.~S. Ellenberg, M.~Satriano, and D.~Zureick-Brown, \emph{Heights on stacks and a generalized {B}atyrev-{M}anin-{M}alle conjecture}, Forum Math. Sigma \textbf{11} (2023), Paper No. e14, 54.

\bibitem{ellenbergfunctfields}
J.~S. Ellenberg and A.~Venkatesh, \emph{Counting extensions of function fields with bounded discriminant and specified {G}alois group}, Geometric methods in algebra and number theory, Progr. Math., vol. 235, Birkh\"auser Boston, Boston, MA, 2005, pp.~151--168.

\bibitem{HNPabelianext}
C.~Frei, D.~Loughran, and R.~Newton, \emph{The {H}asse norm principle for abelian extensions}, Amer. J. Math. \textbf{140} (2018), no.~6, 1639--1685.

\bibitem{Frei_2022}
\bysame, \emph{Number fields with prescribed norms}, Comment. Math. Helv. \textbf{97} (2022), no.~1, 133--181, With an appendix by {Y}onatan {H}arpaz and {O}livier {W}ittenberg.

\bibitem{gundlach2022mallesconjecturemultipleinvariants}
F.~Gundlach, \emph{Malle's conjecture with multiple invariants}, 2022, arXiv:2211.16698.

\bibitem{koymanspagano}
P.~Koymans and C.~Pagano, \emph{On {M}alle’s conjecture for nilpotent groups}, Trans. Amer. Math. Soc. Ser. B \textbf{10} (2023), 310--354.

\bibitem{koymansdct}
P.~Koymans and N.~Rome, \emph{A note on the {H}asse norm principle}, Bull. Lond. Math. Soc. \textbf{56} (2024), no.~3, 1004--1013.

\bibitem{loughransantens}
D.~Loughran and T.~Santens, \emph{Malle's conjecture and {B}rauer groups of stacks}, 2024, arXiv:2412.04196.

\bibitem{malle1}
G.~Malle, \emph{On the distribution of {G}alois groups}, J. Number Theory \textbf{92} (2002), no.~2, 315--329.

\bibitem{malle2}
\bysame, \emph{On the distribution of {G}alois groups. {II}}, Experiment. Math. \textbf{13} (2004), no.~2, 129--135.

\bibitem{narkiewicz1983number}
W.~Narkiewicz, \emph{Number theory}, World Scientific, 1983.

\bibitem{neukirch2013cohomology}
J.~Neukirch, A.~Schmidt, and K.~Wingberg, \emph{Cohomology of number fields}, Grundlehren der mathematischen Wissenschaften, Springer Berlin Heidelberg, 2013.

\bibitem{peyretamagawa}
E.~Peyre, \emph{Hauteurs et mesures de {T}amagawa sur les vari\'et\'es de {F}ano}, Duke Math. J. \textbf{79} (1995), no.~1, 101--218.

\bibitem{santens2023maninsconjectureintegralpoints}
T.~Santens, \emph{Manin's conjecture for integral points on toric varieties}, 2023, arXiv:2312.13914.

\bibitem{serre1992topics}
J.P. Serre, \emph{Topics in {G}alois theory}, Research Notes in Mathematics, vol.~1, Jones and Bartlett Publishers, 1992, Notes written by Henri Darmon.

\bibitem{serre2016lectures}
\bysame, \emph{Lectures on {$N_X (p)$}}, Chapman \& Hall/CRC Research Notes in Mathematics, vol.~11, CRC Press, Boca Raton, FL, 2012.

\bibitem{stacks-project}
The {Stacks project authors}, \emph{{S}tacks project}.

\bibitem{andrewwiles}
A.~Wiles, \emph{Modular elliptic curves and {F}ermat's last theorem}, Ann. of Math. \textbf{141} (1995), no.~3, 443--551.

\bibitem{Wood_2010}
M.M. Wood, \emph{On the probabilities of local behaviors in abelian field extensions}, Compositio Mathematica \textbf{146} (2010), no.~1, 102–128.

\bibitem{Wright1989DistributionOD}
D.J. Wright, \emph{Distribution of discriminants of abelian extensions}, Proc. London Math. Soc. \textbf{58} (1989), no.~1, 17--50.

\end{thebibliography}
\end{document}